\documentclass[a4, 11pt]{amsart}

\textheight=8.0 in
\textwidth=6.0 in
\oddsidemargin=0.3 in
\evensidemargin=0.3 in

\usepackage{graphicx}
\usepackage{cite}
\usepackage[utf8]{inputenc}
\usepackage[english]{babel}
\usepackage{amsmath}
\usepackage{amsfonts}
\usepackage{amssymb}
\usepackage{xcolor}
\usepackage{amsthm}
\usepackage{xypic}
\usepackage{enumitem}

\newtheorem{thm}{Theorem}[section]
\newtheorem{lemma}[thm]{Lemma}
\newtheorem{prop}[thm]{Proposition}
\newtheorem{cor}[thm]{Corollary}

\theoremstyle{definition}
\newtheorem{defn}[thm]{Definition}
\newtheorem{example}[thm]{Example}

\theoremstyle{remark}
\newtheorem{rem}[thm]{Remark}

\numberwithin{equation}{section}


\DeclareMathOperator{\Ker}{Ker}
\DeclareMathOperator{\Img}{Im}
\DeclareMathOperator{\Mat}{Mat}
\DeclareMathOperator{\Hom}{Hom}
\DeclareMathOperator{\End}{End}

\DeclareMathOperator{\Rad}{Rad}
\DeclareMathOperator{\I}{I}

\newcommand{\sm}{\sigma[M]}

\newcommand{\ess}{\leq^\text{ess}}
\newcommand{\dleq}{\leq^\oplus}


\begin{document}

\title{Abelian Endoregular Modules}

\author{Mauricio Medina-B\'arcenas}
\address{Department of Mathematics, Chungnam National University,
Yuseong-gu Daejeon 34134, Republic of Korea}
\email{mmedina@cnu.ac.kr, corresponding author}

\author{Hanna Sim}
\email{hnsim@cnu.ac.kr}

\subjclass[2010]{16D10; 16D70; 16E50; 16S50}




\keywords{Abelian endoregular modules, fully invariant $M$-generated submodules, subdirect products of simple modules, Abelian regular rings.}

\begin{abstract}
In this paper, we introduce the notion of abelian endoregular modules as those modules whose endomorphism rings are abelian von Neumann regular.
We characterize an abelian endoregular module $M$ in terms of its $M$-generated submodules.
We prove that if $M$ is an abelian endoregular module then so is every $M$-generated submodule of $M$.
Also, the case when the (quasi-)injective hull of a module as well as the case when a direct sum of modules is abelian endoregular are presented.
At the end, we study abelian endoregular modules as subdirect products of simple modules.
\end{abstract}

\maketitle

\section*{Introduction}


\emph{Von Neumann Regular rings} have been an active topic in ring theory since they were introduced. An intrinsic study of these rings can be found, in full detail in Goodearl's book \cite{goodearlneumann}. Among von Neumann regular rings there are subclasses of interest in their own. Some of such classes are the \emph{unit regular rings}  and the \emph{strongly regular rings}. Recall that a ring $R$ is von Neumann regular if for any $r\in R$ there exists $x\in R$ such that $rxr=r$; if $x$ can be chosen as a unit then $R$ is called unit regular; and if $r^2x=r$ then $R$ is called strongly regular. Strongly regular rings where named by Arens and Kaplansky \cite{arenstopological}. They proved that a strongly regular ring is von Neumann regular an every one sided ideal is two sided. Forsythe and McCoy \cite{forsytheonthecomm} showed that a strongly regular ring is a von Neumann regular ring such that all idempotents are central. Recall that a ring $R$ is called \emph{abelian} provided that every idempotent in $R$ is central. In \cite[Theorem 3.5]{goodearlneumann} can be found a proof that a ring is strongly regular if and only if it is an abelian von Neumann regular ring. Azumaya in 1948 characterized modules whose endomorphism rings are von Neumann regular as those modules $M$ such that $\Ker\varphi\dleq M$ and $\Img\varphi\dleq M$ for all $\varphi\in\End_R(M)$ \cite{azumayageneralized}. Recently, these kind of modules have been studied in detail, such modules are called \emph{endoregular}, and many properties are discussed in \cite{leemodules}. Later in \cite{leeunit} the modules whose endomorphism rings are unit regular rings were presented. Now, in this manuscript, we deal with the analogous case for abelian regular rings. We call a module $M$ \emph{abelian endoregular} if $\End_R(M)$ is an abelian regular ring. We study an abelian endoregular module $M$, looking at its $M$-generated submodules. We prove that direct summands of abelian endoregular modules inherit the property (Corollary \ref{dirsummand}), moreover if $M$ is an abelian endoregular module then every $M$-generated submodule of $M$ inherits the property (Theorem \ref{limit}). Also, we characterize direct sums of abelian endoregular modules (Proposition \ref{sums}). In the ring case, an abelian regular ring is characterized as a regular ring which is a subdirect product of division rings, here we study the analogous module-theoretic result. If $M$ is a quasi-duo endoregular module which is a subdirect product of simple modules then $M$ is abelian endoregular (Proposition \ref{sdpse}). In order to find the converse, we make use of the concept of (resp., \emph{semi})\emph{prime submodule} introduced by Raggi et.al., in (resp., \cite{raggisemiprime}) \cite{raggiprime} with the product of submodules defined in \cite{BicanPr}.

This paper is divided as follows: Section 1 concerns to the preliminary background to make this work self-contained as much as possible. In these preliminaries we recall the definition of a product of submodules given in \cite{BicanPr} and the definitions of prime and semiprime submodules given in \cite{raggiprime} and \cite{raggisemiprime} respectively. Looking for examples to illustrate and delimit our results we make a brief exposition of \emph{incidence algebras}. We prove that given a module $M$ with some properties over a ring $R$, the endomorphism ring of $M$ over $R$ and the endomorphism ring of $M^{(X)}$ as module over the incidence algebra $\I(X,R)$ are isomorphic, where $X$ denotes a preordered set (Proposition \ref{incend}). In Section 2 we extend some results which are presented in \cite[Ch. 3]{goodearlneumann} to the module case.  We characterize an abelian endoregular module $M$ as an endoregular module such that every $M$-generated submodule is fully invariant (Proposition \ref{subfi}). Also, we characterize the direct sums of abelian endoregular modules (Proposition \ref{sums}). At the end of Section 2 we present the case when the (quasi-)injective hull of a module is abelian endoregular (Theorem \ref{injhullsr}). Section 3 presents abelian endoregular modules as subdirect products of simple modules. We show when a subdirect product of simple modules is abelian endoregular and vise versa (Proposition \ref{sdpse} and Theorem \ref{stqdr}). We finish the paper with a connection between abelian endoregular modules and cosemisimple modules (Proposition \ref{coss}).

\section{Preliminaries}

Throughout the paper $R$ denotes an associative ring with identity. All $R$-modules are unitary right $R$-modules.  For a submodule $N$ of a module $M$ we use the notation $N\leq M$ and if $N$ is a direct summand we write $N\dleq M$. If $X$ is a set, $M^{(X)}$ means the direct sum of $|X|$ copies of $M$; if $X$ is finite, say $|X|=n$ we write $M^{(n)}$. Recall that a submodule $N$ of $M$ is \emph{fully invariant} ($N\leq_{fi}M$) if $\varphi(N)\subseteq N$ for all $\varphi\in\End_R(M)$.

The next lemma will be used without mention in many results along the paper.

\begin{lemma}\label{episumdir}
Let $\rho:M\to N$ be an epimorphism and $L\leq N$. If $\rho^{-1}(L)\dleq M$ then $L$ is a direct summand of $N$.
\end{lemma}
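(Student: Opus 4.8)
The plan is to transport a direct complement of $\rho^{-1}(L)$ through $\rho$. By hypothesis we may write $M = \rho^{-1}(L) \oplus K$ for some submodule $K \leq M$, and I claim that $N = L \oplus \rho(K)$, which immediately gives $L \dleq N$.

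First, the sum. Since $\rho$ is an epimorphism and $\rho\bigl(\rho^{-1}(L)\bigr) = L$ (here one uses that $\Ker\rho \subseteq \rho^{-1}(L)$, so that the preimage surjects onto $L$), we get
\[
N = \rho(M) = \rho\bigl(\rho^{-1}(L)\bigr) + \rho(K) = L + \rho(K).
\]
Next, directness. Let $x \in L \cap \rho(K)$ and write $x = \rho(k)$ with $k \in K$. Then $\rho(k) = x \in L$, so $k \in \rho^{-1}(L)$, whence $k \in \rho^{-1}(L) \cap K = 0$ and $x = 0$. Thus $N = L \oplus \rho(K)$, as wanted.

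There is no real obstacle here; the only thing to keep in mind is that $\rho^{-1}(L)$ automatically contains $\Ker\rho$, which is exactly what makes $\rho\bigl(\rho^{-1}(L)\bigr) = L$ hold and what prevents $K$ from interacting badly with the kernel when passing to images. Everything else is a one-line diagram chase.
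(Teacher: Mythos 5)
Your proof is correct and follows exactly the same route as the paper's: decompose $M=\rho^{-1}(L)\oplus K$, show $N=L+\rho(K)$ using surjectivity (with the observation $\Ker\rho\subseteq\rho^{-1}(L)$ made explicit, which the paper leaves implicit), and verify $L\cap\rho(K)=0$ by pulling an element back into $\rho^{-1}(L)\cap K$. Nothing to change.
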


\begin{proof}
Suppose $M=\rho^{-1}(L)\oplus K$. Since $\rho$ is an epimorphism $N=L+\rho(K)$. If $x\in L\cap\rho(K)$, then there exists $k\in K$ such that $x=\rho(k)$ and since $x\in L$, $k$ lies in $\rho^{-1}(L)$. Thus $k\in\rho^{-1}(L)\cap K=0$, that implies $x=0$. Therefore $N=L\oplus\rho(K)$.
\end{proof}


Recall that a module $M$ is said to be \emph{endoregular} \cite{leemodules} if $\End_R(M)$ is a von Neumann regular ring.

\begin{prop}[Theorem 16, \cite{azumayageneralized}]\label{azu}
Let $M$ be a module and $S=\End_R(M)$. Let $\varphi\in S$ be any element. Then, $\Ker\varphi\dleq M$ and $\Img\varphi\dleq M$ if and only if there exists $\psi\in S$ such that $\varphi\psi\varphi=\varphi$. Consequently, $M$ is endoregular if and only if $\Ker\varphi$ and $\Img\varphi$ are direct summands of $M$.
\end{prop}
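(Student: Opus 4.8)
The plan is to establish the biconditional for a fixed $\varphi\in S$ and then read off the ``consequently'' clause, since $S$ being von Neumann regular means precisely that every $\varphi\in S$ admits some $\psi$ with $\varphi\psi\varphi=\varphi$.

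For the implication assuming $\varphi\psi\varphi=\varphi$: I would first note that $e:=\psi\varphi$ and $f:=\varphi\psi$ are idempotents in $S$, since $e^2=\psi(\varphi\psi\varphi)=\psi\varphi=e$ and likewise $f^2=f$. Then I would check that $\Ker\varphi=\Ker e$: the inclusion $\Ker\varphi\subseteq\Ker e$ is immediate, and conversely if $e(m)=0$ then $\varphi(m)=\varphi\psi\varphi(m)=0$. As $e$ is idempotent, $\Ker e=(1-e)(M)$ is a direct summand of $M$, hence so is $\Ker\varphi$. Dually, $\Img\varphi=\Img f$, because $\Img f\subseteq\Img\varphi$ trivially while $\Img\varphi=\Img(\varphi\psi\varphi)\subseteq\Img(\varphi\psi)=\Img f$; and $\Img f=f(M)$ is a direct summand of $M$ since $f$ is idempotent.

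For the converse: writing $M=\Ker\varphi\oplus K$ and $M=\Img\varphi\oplus L$ using the hypotheses, the restriction $\varphi|_K\colon K\to\Img\varphi$ is an isomorphism (injective because $K\cap\Ker\varphi=0$, surjective because $\varphi(M)=\varphi(K)$). Letting $\theta\colon\Img\varphi\to K$ be its inverse and $\pi\colon M\to\Img\varphi$ the projection along $L$, I would define $\psi\in S$ as the composite $M\xrightarrow{\pi}\Img\varphi\xrightarrow{\theta}K\hookrightarrow M$, which is $R$-linear because each factor is. To verify $\varphi\psi\varphi=\varphi$, take $m=m_0+m_1$ with $m_0\in\Ker\varphi$ and $m_1\in K$; then $\varphi(m)=\varphi(m_1)\in\Img\varphi$, so $\psi\varphi(m)=\theta(\varphi(m_1))=m_1$, whence $\varphi\psi\varphi(m)=\varphi(m_1)=\varphi(m)$.

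Finally, the last sentence is immediate: $M$ is endoregular iff $S=\End_R(M)$ is von Neumann regular iff every $\varphi\in S$ has some $\psi\in S$ with $\varphi\psi\varphi=\varphi$, which by the equivalence just proved is exactly the requirement that $\Ker\varphi$ and $\Img\varphi$ be direct summands of $M$ for all $\varphi\in S$. I do not anticipate a genuine obstacle here; the only steps needing mild care are confirming that the idempotents $e$ and $f$ cut out the kernel and image of $\varphi$ precisely, and that the piecewise-defined $\psi$ really lies in $\End_R(M)$.
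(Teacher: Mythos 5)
Your proof is correct and complete in both directions: the idempotents $e=\psi\varphi$ and $f=\varphi\psi$ do cut out $\Ker\varphi$ and $\Img\varphi$ exactly, and the map $\psi$ built from the inverse of $\varphi|_K$ composed with the projection of $M$ onto $\Img\varphi$ along $L$ is a well-defined endomorphism satisfying $\varphi\psi\varphi=\varphi$. The paper gives no proof of this proposition at all (it is quoted as Azumaya's Theorem 16 from the cited reference), so there is nothing internal to compare against; your argument is the standard classical one and correctly fills that gap.
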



\begin{prop}\label{kerdirecsum}
Let $M$ be an endoregular module.
\begin{enumerate}[label=\emph{(\arabic*)}]
\item If $\varphi:M^{(n)}\to M^{(\ell)}$ is a homomorphism with $n,\ell>0$ then, $\Ker\varphi\dleq M^{(n)}$ and $\Img\varphi\dleq M^{(\ell)}$.
\item For any direct summand $M'\dleq M$, any submodule $N\leq M$ and any homomorphism $\varphi:M'\to N$, $Ker\varphi\dleq M'$ and $\Img\varphi\dleq N$.
\end{enumerate}\end{prop}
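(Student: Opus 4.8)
The plan is to derive both parts from Proposition \ref{azu} (Azumaya's criterion) together with Lemma \ref{episumdir}, reducing everything to the fact that $M$ itself is endoregular. For part (1), I would first observe that $\End_R(M^{(n)})$ is naturally isomorphic to the matrix ring $\Mat_n(\End_R(M))$, and more generally a homomorphism $\varphi:M^{(n)}\to M^{(\ell)}$ is given by an $\ell\times n$ matrix with entries in $\End_R(M)$. The key classical fact to invoke is that a finite direct sum of modules with von Neumann regular endomorphism rings again has von Neumann regular endomorphism ring; equivalently, the endoregular property passes to finite direct sums $M^{(k)}$. (This is standard — e.g. it follows because the class of endoregular modules is closed under finite direct sums, or one can cite the matrix-ring description of regularity.) Granting that, $M^{(n)}\oplus M^{(\ell)}=M^{(n+\ell)}$ is endoregular, and $\varphi$ can be viewed as (the composite with projections and inclusions of) an endomorphism $\tilde\varphi$ of $M^{(n+\ell)}$ whose kernel is $\Ker\varphi\oplus M^{(\ell)}$ and whose image is $\Img\varphi\subseteq M^{(\ell)}\subseteq M^{(n+\ell)}$. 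By Proposition \ref{azu}, $\Ker\tilde\varphi\dleq M^{(n+\ell)}$ and $\Img\tilde\varphi\dleq M^{(n+\ell)}$; the first gives $\Ker\varphi\oplus M^{(\ell)}\dleq M^{(n+\ell)}$, hence $\Ker\varphi\dleq M^{(n)}$ by the modular law, and since $\Img\tilde\varphi=\Img\varphi$ is a direct summand of $M^{(n+\ell)}$ contained in the direct summand $M^{(\ell)}$, it is a direct summand of $M^{(\ell)}$ as well.

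For part (2), write $M=M'\oplus M''$ and let $\varphi:M'\to N\leq M$ be given. Extend $\varphi$ to $\hat\varphi\in\End_R(M)$ by setting $\hat\varphi|_{M''}=0$ (equivalently $\hat\varphi=\iota\varphi\pi$ where $\pi:M\to M'$ is the projection and $\iota:N\hookrightarrow M$). Then $\Ker\hat\varphi=\Ker\varphi\oplus M''$ and $\Img\hat\varphi=\Img\varphi$. Since $M$ is endoregular, Proposition \ref{azu} gives $\Ker\hat\varphi\dleq M$ and $\Img\hat\varphi\dleq M$. From $\Ker\varphi\oplus M''\dleq M$ and $\Ker\varphi\subseteq M'$ one concludes $\Ker\varphi\dleq M'$ (again by the modular law, since $M'=(\Ker\varphi\oplus M'')\cap M'$-complement intersected with $M'$). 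Finally $\Img\varphi=\Img\hat\varphi\dleq M$; to get that it is a direct summand of $N$ rather than merely of $M$, note $\Img\varphi\subseteq N$ and a direct summand of $M$ contained in $N$ is a direct summand of $N$.

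The one genuinely non-formal ingredient is the closure of endoregularity under finite direct sums used in part (1); everything else is bookkeeping with projections, inclusions, and the modular law. I expect the main obstacle to be making precise the reduction of a rectangular homomorphism $M^{(n)}\to M^{(\ell)}$ to a square endomorphism of $M^{(n+\ell)}$ and checking that kernel and image behave as claimed — this is routine but must be done carefully so that the appeal to Proposition \ref{azu} is legitimate. An alternative for part (1) that avoids the direct-sum closure statement is to argue entrywise: realize $\varphi$ by a matrix over $S=\End_R(M)$ and use that $\Mat_{\ell\times n}(S)$ over a regular ring $S$ still has the property that every element has kernel and image that are direct summands of the relevant free modules; but citing endoregularity of $M^{(k)}$ is cleaner if that result (or Azumaya's) is available.
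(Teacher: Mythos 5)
Your argument is correct, and it is genuinely more self-contained than the paper's, which offers no argument at all: the paper simply cites \cite[Proposition 3.4, Theorem 3.6 and Corollary 3.15]{leemodules}. Your route for (1) --- embed the rectangular map $\varphi:M^{(n)}\to M^{(\ell)}$ as the endomorphism $\tilde\varphi=\iota\varphi\pi$ of $M^{(n+\ell)}$, apply Proposition \ref{azu} there, and pull the conclusions back via the modular-law fact that a direct summand of $M^{(n+\ell)}$ contained in a submodule $B$ is a direct summand of $B$ --- is exactly the bookkeeping that the cited results encapsulate, and your part (2), extending $\varphi$ to $\hat\varphi=\iota\varphi\pi\in\End_R(M)$ and using the same modular-law fact to land in $M'$ and in $N$, is likewise correct. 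One caveat: the ``key classical fact'' as you first phrase it --- that a finite direct sum of endoregular modules is again endoregular --- is false in general; for a field $K$ the ring $T=\left(\begin{smallmatrix}K&K\\0&K\end{smallmatrix}\right)$ decomposes as $e_1T\oplus e_2T$ with $\End_T(e_iT)\cong K$ regular for both summands, yet $\End_T(T_T)\cong T$ is not regular. What is true, and all your proof uses, is the version for copies of a single module: $\End_R(M^{(k)})\cong\Mat_k(\End_R(M))$ and matrix rings over von Neumann regular rings are regular --- precisely the justification you give parenthetically, and also the content of the result the paper cites from \cite{leemodules}. With the claim stated in that restricted form your proof stands; its advantage over the paper's treatment is self-containedness, while the citation buys brevity and defers the matrix-ring computation to the literature.
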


\begin{proof}
It follows from \cite[Proposition 3.4, Theorem 3.6 and Corollary 3.15]{leemodules}
\end{proof}

A module $M$ is said to have \emph{summand sum property} (SSP), if the sum of any two direct summands is a direct summand of $M$. The module $M$ is said to have \emph{summand intersection property} (SIP), if the intersection of two direct summands is a direct summand of $M$.

\begin{lemma}\label{endosip}
If $M$ is an endoregular module then $M$ has SSP and SIP.
\end{lemma}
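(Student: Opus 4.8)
The plan is to realize the sum and the intersection of two direct summands as, respectively, the image and the kernel of a suitable homomorphism between finite direct powers of $M$, and then invoke Proposition~\ref{kerdirecsum}(1).

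Write $S=\End_R(M)$; by hypothesis $S$ is von Neumann regular. Fix two direct summands $A,C\dleq M$, together with idempotents $e,f\in S$ such that $\Img e=A$ and $\Img f=C$. Recall that for an idempotent $g\in S$ one has $\Img g=\Ker(1-g)$ and $M=\Img g\oplus\Ker g$, so $1-e$ and $1-f$ are also idempotents with $\Ker(1-e)=A$ and $\Ker(1-f)=C$.

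For the SSP, I would define $\varphi\colon M^{(2)}\to M$ by $\varphi(x,y)=e(x)+f(y)$. This is $R$-linear and $\Img\varphi=\Img e+\Img f=A+C$. Applying Proposition~\ref{kerdirecsum}(1) with $n=2$ and $\ell=1$ gives that $\Img\varphi$ is a direct summand of $M^{(1)}=M$, that is, $A+C\dleq M$. For the SIP, I would define $\psi\colon M\to M^{(2)}$ by $\psi(x)=\big((1-e)(x),(1-f)(x)\big)$. Then $\Ker\psi=\Ker(1-e)\cap\Ker(1-f)=A\cap C$, and Proposition~\ref{kerdirecsum}(1) with $n=1$ and $\ell=2$ yields that $\Ker\psi=A\cap C$ is a direct summand of $M$. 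Since $A$ and $C$ were arbitrary direct summands, $M$ has both SSP and SIP.

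The argument is essentially a bookkeeping exercise: the only points requiring care are the correct translation of ``sum/intersection of the summands'' into ``image/kernel of the map'' (using the identities $\Img g=\Ker(1-g)$ for idempotents), and keeping track of which direct power of $M$ serves as domain and which as codomain, so that Proposition~\ref{kerdirecsum}(1) applies with admissible $n,\ell>0$. I do not expect any substantial obstacle.
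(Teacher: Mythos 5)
Your argument is correct, but it takes a different route from the paper. The paper's proof is a one-line appeal to a criterion of Garc\'ia (\cite[Lemma 2.1]{garciaproperties}): if for all idempotents $e,f\in\End_R(M)$ the image (resp.\ kernel) of the product $ef$ is a direct summand of $M$, then $M$ has SSP (resp.\ SIP); since $M$ is endoregular, $\Img(ef)$ and $\Ker(ef)$ are direct summands for \emph{every} endomorphism, so both conclusions follow at once from Proposition \ref{azu}. You instead realize $A+C$ and $A\cap C$ directly as $\Img\varphi$ for $\varphi\colon M^{(2)}\to M$, $(x,y)\mapsto e(x)+f(y)$, and as $\Ker\psi$ for $\psi\colon M\to M^{(2)}$, $x\mapsto((1-e)(x),(1-f)(x))$, using the identity $\Img g=\Ker(1-g)$ for idempotents, and then invoke Proposition \ref{kerdirecsum}(1). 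Both routes are sound; the trade-off is that your argument avoids Garc\'ia's lemma entirely and is transparent about where the summand conclusion comes from, but it leans on Proposition \ref{kerdirecsum}(1), whose proof (via \cite{leemodules}) rests on the nontrivial fact that finite direct powers of an endoregular module behave well, whereas the paper's citation stays entirely inside $\End_R(M)$ and needs only the single-module statement of Proposition \ref{azu} applied to the product $ef$.
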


\begin{proof}
In \cite[Lemma 2.1]{garciaproperties} it is proved if given idempotents $e,f\in\End_R(M)$ such that $\Img(ef)\dleq M$ (resp., $\Ker(ef)$) then $M$ satisfies SSP (resp., SIP).
\end{proof}

\begin{defn}
Let $M$ be a module. A module $N$ is said to be (\emph{finitely}) \emph{$M$-generated} if there exists an epimorphism $M^{(\mathcal{I})}\to N$ for some index set $\mathcal{I}$ (resp., $M^{(n)}\to N$ for some $n>0$).
\end{defn}

\begin{prop}\label{finmsumand}
Let $M$ be an endoregular module and $N\leq M$. Then, $N$ is finitely $M$-generated if and only if $N$ is a direct summand of $M$.
\end{prop}

\begin{proof}
Suppose that $N\leq M$ is finitely $M$-generated. Hence, there exists an epimorphism $\rho:M^{(n)}\to N$ for some $n>0$. Thus, $N=\Img\rho\dleq M$ by Proposition \ref{kerdirecsum}. The converse is clear.
\end{proof}

Since we are interested in the endomorphism ring of a given module, we will present a short exposition on incidence algebras which will help us to find examples which satisfy our desire conditions. For the notation and terminology, the reader is referred to \cite{feinbergcharacterization} and \cite{spiegelincidence}.

\begin{defn}
Given $x$ and $z$ in a preordered set $(X,\mathcal{R})$, the \emph{interval or segment from $x$ to $z$ is} $\{y\in X\mid x\mathcal{R}y\mathcal{R}z\}$ and is denoted by $[x,z]$. A preordered set $(X,\mathcal{R})$ is \emph{locally finite} if every segment of $X$ is finite.
\end{defn}

\begin{defn}
The \emph{incidence algebra $\I(X,A)$} of the locally finite preordered set $(X,\mathcal{R})$ over the commutative ring $A$ with identity  is
\[\I(X,A)=\{f:X\times X\to A\mid f(x,y)=0\text{ if } (x,y)\notin \mathcal{R}\}\]
with operations given by
\[(f+g)(x,y)=f(x,y)+g(x,y)\]
\[(fg)(x,y)=\sum_{x\mathcal{R}z\mathcal{R}y} f(x,z)g(z,y)\]
\[(r\cdot f)(x,y)=rf(x,y)\]
for $f,g\in \I(X,A)$ with $r\in A$ and $x,y,z\in X$.
\end{defn}

\begin{defn}
A preordered set $(X,\mathcal{R})$ is \emph{upper finite} if the set $\{y\in X\mid x\mathcal{R} y\}$ is finite for every $x\in X$.
\end{defn}

Let $(X,\mathcal{R})$ be an upper finite preordered set  and $A$ be a commutative ring with identity. For every right $A$-module $M_A$, we consider $M(X)=M^{(X)}$ as abelian group. If $R=\I(X,A)$ is the incidence algebra of $X$ over the ring $A$ then, $M(X)$ is a right $R$-module with action
\[(m_x)_{x\in X}f=\sum_{x\in X}(m_xf(x,y))_{y\in X},\]
for $(m_x)_{x\in X}\in M(X)$ and $f\in R$.

\noindent\textbf{Notation:} For $x\mathcal{R} y$, denote by $e_{xy}\in R$ the element in $\I(X,A)$ such that $e_{xy}(x,y)=1$ and $0$ otherwise.

\begin{prop}\label{incend}
Let $(X,\mathcal{R})$ be an upper finite preordered set with an element $w\in X$ such that $w\mathcal{R}x$ for all $x\in X$. Let $R=\I(X,A)$ be the incidence algebra of $X$ over the commutative ring with identity $A$ and let $M_A$ be a right $A$-module. If one of the following conditions is satisfied,
\begin{enumerate}[label=\emph{(\arabic*)}]
\item $M_A$ is cyclic.
\item $M_A=AS^{-1}$, the ring of fractions of $A$ with respect to a multiplicative closed set $S$.
\end{enumerate}
Then $\End_A(M)\cong\End_R(M(X))$.
\end{prop}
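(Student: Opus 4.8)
The plan is to construct an explicit isomorphism $\End_A(M) \to \End_R(M(X))$.

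First I would describe the candidate map. Given $\alpha \in \End_A(M)$, define $\widehat{\alpha}: M(X) \to M(X)$ by applying $\alpha$ coordinatewise, i.e. $\widehat{\alpha}\big((m_x)_{x\in X}\big) = (\alpha(m_x))_{x\in X}$. One checks directly from the formula for the $R$-action that $\widehat{\alpha}$ is $R$-linear: since $\alpha$ is $A$-linear and the action of $f \in R$ only involves $A$-scalar multiplications and finite sums (upper finiteness guarantees these sums are finite), $\alpha$ commutes with the action of each $e_{xy}$, hence with all of $R$. The assignment $\alpha \mapsto \widehat{\alpha}$ is clearly an injective ring homomorphism, so the whole content is surjectivity: every $\varphi \in \End_R(M(X))$ arises this way.

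For surjectivity, the role of the element $w$ with $w\mathcal{R}x$ for all $x$ is crucial: it singles out a distinguished copy of $M$ inside $M(X)$, namely the $w$-th coordinate, and the idempotents $e_{xx}$ together with the $e_{wx}$ let one move between coordinates. Given $\varphi \in \End_R(M(X))$, I would first show $\varphi$ respects the coordinate decomposition by analyzing how it interacts with the orthogonal idempotents $e_{xx} \in R$ (note $\sum_x e_{xx}$ acts as the identity in the relevant finite sense on each element). Write $\varphi_x$ for the "$x$-to-$x$" component. The relations $e_{wx} e_{xx} = e_{wx}$, $e_{xx} e_{wx} = 0$ for $x \neq w$ type identities, and more importantly the fact that right multiplication by $e_{wx}$ maps the $w$-coordinate copy isomorphically onto the $x$-coordinate copy, force all the $\varphi_x$ to coincide with a single endomorphism $\alpha := \varphi_w$ of $M$ (after identifying each coordinate with $M$), and force the off-diagonal components to vanish. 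This is where the two hypotheses on $M_A$ enter: one needs that an $A$-linear endomorphism of $M$ is determined by, or compatible with, the module structure in a way that makes the "diagonal" forced — for $M$ cyclic, an endomorphism is determined by the image of the generator sitting in the $w$-coordinate; for $M = AS^{-1}$, one uses that endomorphisms of the localization are given by multiplication by an element of $AS^{-1}$, again pinned down by the $w$-coordinate. Concluding $\varphi = \widehat{\alpha}$ then finishes the proof.

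The main obstacle I expect is the surjectivity argument, specifically showing that $\varphi$ must be diagonal and constant along the diagonal. The subtlety is that a priori $\varphi$ restricted to the $x$-coordinate copy could land in many coordinates $y$, and one must use $R$-linearity — in particular commutation with the $e_{xy}$'s and with the idempotents $e_{xx}$ — to kill all the spurious components. The two alternative hypotheses (1) and (2) on $M_A$ are precisely what is needed to rule out "exotic" endomorphisms that are not coordinatewise; without such a hypothesis the conclusion can fail, so I would keep track carefully of exactly where cyclicity (resp. the localization structure) is used, rather than trying to prove it in blanket generality.
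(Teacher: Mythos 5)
Your proposal is correct and takes essentially the same route as the paper: there, too, one shows via the elements $e_{ww}$, $e_{wy}$ and the idempotents $e_{xx}$ that any $\Phi\in\End_R(M(X))$ acts coordinatewise (the paper's displayed equations (1.1)--(1.2)), and then uses cyclicity (resp.\ the structure of $AS^{-1}$) to identify the induced $A$-endomorphism, so that $\Phi$ is exactly the coordinatewise extension you denote $\widehat{\alpha}$. The only small nuance is that diagonality and the coincidence of the diagonal components are already forced by commutation with the $e_{xx}$ and $e_{wx}$ alone, so hypotheses (1)/(2) serve only to pin down the endomorphism of $M$ from its value on the generator (resp.\ on $1$ and the elements $b^{-1}$), which is precisely how the paper employs them as well.
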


\begin{proof}
Let $\delta_{xy}$ denote the Kronecker's delta. Consider $m\in M$ and $w\in X$ such that $w\mathcal{R}x$ for all $x\in X$. Let $\Phi\in\End_R(M(X))$ be any element. Set $\Phi((\delta_{wx}m)_{x\in X})=(n_x)_{x\in X}\in M(X)$. Then,
\begin{equation}\label{eq1}
\Phi((\delta_{wx}m)_{x\in X})=\Phi((\delta_{wx}m)_{x\in X}e_{ww})=\Phi((\delta_{wx}m)_{x\in X})e_{ww}=(n_x)_{x\in X}e_{ww}=(\delta_{wx}n_w)_{x\in X}.
\end{equation}

Since $w\mathcal{R} y$ for all $y\in X$, we have
\begin{equation}\label{eq2}
\Phi((\delta_{yx}m)_{x\in X})=\Phi((\delta_{wx}m)_{x\in X}e_{wy})=\Phi((\delta_{wx}m)_{x\in X})e_{wy}=(\delta_{wx}n_w)_{x\in X}e_{wy}=(\delta_{yx}n_w)_{x\in X}.
\end{equation}

For (1), suppose $M_A=mA$ for some $m\in M$. Let $(m_x)_{x\in X}\in M(X)$ and $\Phi\in\End_R(M(X))$ be any endomorphism. Then, $(m_x)_{x\in X}=(ma_x)_{x\in X}$ for some $a_x\in A$ and by (\ref{eq2}), for all $y\in X$, $\Phi((\delta_{yx}m)_{x\in X})=(\delta_{yx}n)_{x\in X}$ for some $n\in M$.  Hence,
\[\Phi((ma_x)_{x\in X})
=\sum_{x\in X}\Phi((\delta_{xy}m)_{y\in X}e_{xx}a_x)
 =\sum_{x\in X}\Phi((\delta_{xy}m)_{y\in X})e_{xx}a_x\]
 \[ =\sum_{x\in X}(\delta_{xy}n)_{y\in X}e_{xx}a_x
 =(na_x)_{x\in X}.\]

Define $\varphi\in\End_A(M)$ such that $\varphi(m)=n$, then $\Phi((m_x)_{x\in X})=(\varphi(m_x))_{x\in X}$. It is clear that for given $\varphi\in\End_A(M)$, we have an endomorphism $\Phi\in\End_R(M(X))$ defined as $\Phi((m_x)_{x\in X})=(\varphi(m_x))_{x\in X}$. Thus $\End_A(M)\cong\End_R(M(X))$.

For (2), suppose $M_A=AS^{-1}$ for some multiplicative closed set $S\subseteq A$. Let $\Phi\in\End_R(M(X))$ be any endomorphism. By (\ref{eq1}) and (\ref{eq2}), $\Phi((\delta_{wx}1)_{x\in X})=(\delta_{wx}q)_{x\in X}$ for some $q\in M$ and $\Phi((\delta_{yx}1)_{x\in X})=(\delta_{yx}q)_{x\in X}$ for every $y\in X$. Let $b\in S$. Set $\Phi((\delta_{wx}b^{-1})_{x\in X})=(n_x)_{x\in X}\in M(X)$. By (\ref{eq1}) $\Phi((\delta_{wx}b^{-1})_{x\in X})=(\delta_{wx}n_w)_{x\in X}$ then,
\[(\delta_{wx}q)_{x\in X}=\Phi((\delta_{wx}1)_{x\in X}) =\Phi((\delta_{wx}b^{-1}b)_{x\in X})
 =\Phi((\delta_{wx}b^{-1})_{x\in X}e_{ww}b)
 =\Phi((\delta_{wx}b^{-1})_{x\in X})e_{ww}b\]
\[ =(\delta_{wx}n_w)_{x\in X}e_{ww}b
 =(\delta_{wx}n_wb)_{x\in X}.\]

Hence, $b^{-1}q=n_w$. Note that, for $w\neq x\in X$
\[(\delta_{xy}n_x)_{y\in X}=(n_x)_{x\in X}e_{xx}=\Phi((\delta_{wy}b^{-1})_{y\in X}e_{xx})=0.\]
Thus, $\Phi((\delta_{wx}b^{-1})_{x\in X})=(\delta_{wx}b^{-1}q)_{x\in X}$. Let $(m_x)_{x\in X}$ be any element in $M(X)$. Then $m_x=a_xb_x^{-1}$ for some $a_x\in A$ and $b_x\in S$. Therefore,

\[\Phi((m_x)_{x\in X}) =\sum_{x\in X}\Phi((\delta_{xy}m_x)_{y\in X})
 =\sum_{x\in X}\Phi((\delta_{xy}b_x^{-1})_{y\in X}e_{xx}a_x)
 =\sum_{x\in X}\Phi((\delta_{wy}b_x^{-1})_{y\in X})e_{wx}e_{xx}a_x\]
\[ =\sum_{x\in X}(\delta_{wy}b_x^{-1}q)_{y\in X}e_{wx}e_{xx}a_x
 =(qm_x)_{x\in X}.\]

Hence $\Phi$ is completely determined by $q$. Thus, $\End_R(M(X))\cong AS^{-1}\cong\End_A(M)$.
\end{proof}

What follows will be needed for Section 3. We will recall a product of submodules of a given module and some concepts related with this product such as prime and semiprime submodules.

In \cite{BicanPr} was introduced a product of modules. Let $M$ and $N$ be modules. Given $K\leq M$, \emph{the product of $N$ and $K$} is defined as
\[N_MK=\sum\{f(K)\mid f\in \Hom_R(M,N)\}.\]

We can see that if $M$ is an $R$-module and $I$ is a right ideal of $R$ then $MI=M_RI$. For more properties of this product see \cite[Proposition 1.3]{PepeGab}.
%

\begin{defn}\label{defsemiprime}
A proper fully invariant submodule $N$ of $M$ is called \emph{prime} in $M$ \cite{raggiprime} if $K_M L\subseteq N$, then $K\subseteq N$ or $L\subseteq N$ for any fully invariant submodules $K,L$ of $M$. The submodule $N$ is called \emph{semiprime} in $M$ \cite{raggisemiprime} if $K_M K\subseteq N$, then $K\subseteq N$ for any fully invariant submodule $K$ of $M$. The module $M$ is called a (resp., \emph{semi})\emph{prime module} if $0$ is a (resp., semi)prime submodule in $M$. The set of all prime submodules of $M$ is denoted by $Spec(M)$.
\end{defn}

Recall that a module $M$ is said to be \emph{quasi-projective} ($M$-\emph{projective}) if for every epimorphism $\pi:M\to N$ and every homomorphism $f:M\to N$ there exists an endomorphism $g:M\to M$ such that $\pi g=f$. Analogously, $M$ is $M$\emph{-injective} if for every monomorphism $\eta:N\to M$ and every homomorphism $f:N\to M$ there exists an endomorphism $g:M\to M$ such that $g\eta=f$.

\begin{prop}\label{2}
Let $M$ be a module and $N$ a proper fully invariant submodule of $M$. Then,
\begin{enumerate}[label=\emph{(\arabic*)}]
\item If $N$ is (resp., semi)prime in $M$ then $M/N$ is a (resp., semi)prime module.
\item If $M$ is quasi-projective and $M/N$ is a (resp., semi)prime module then $N$ is (resp., semi)prime in $M$.
\end{enumerate}
\end{prop}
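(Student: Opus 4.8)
The plan is to prove both statements by transferring the defining condition back and forth across the canonical projection $\pi\colon M\to M/N$, using throughout that $N\leq_{fi}M$ forces every $\varphi\in\End_R(M)$ to descend to an endomorphism $\bar\varphi$ of $M/N$ with $\pi\varphi=\bar\varphi\pi$.

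For (1), suppose $N$ is prime in $M$ and take fully invariant submodules $\bar K,\bar L$ of $M/N$ with $\bar K_{M/N}\bar L=0$. First I would set $K=\pi^{-1}(\bar K)$ and $L=\pi^{-1}(\bar L)$; these contain $N$ and are fully invariant in $M$, since for $\varphi\in\End_R(M)$ the inclusion $\bar\varphi(\bar K)\subseteq\bar K$ pulls back to $\varphi(K)\subseteq K$. The key point is that $K_{M}L\subseteq N$: for $f\in\Hom_R(M,K)$ full invariance of $N$ gives $f(N)\subseteq N$, so $f$ descends to some $\bar f\in\Hom_R(M/N,\bar K)$, whence $\pi(f(L))=\bar f(\bar L)\subseteq\bar K_{M/N}\bar L=0$ and thus $f(L)\subseteq N$. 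Primeness of $N$ then yields $K\subseteq N$ or $L\subseteq N$, i.e. $\bar K=0$ or $\bar L=0$, so $M/N$ is prime; the semiprime case is identical with $\bar K=\bar L$. Note that this half uses only that $N$ is fully invariant, not quasi-projectivity.

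For (2) I would run the transfer in the opposite direction, and here quasi-projectivity enters. Given fully invariant $K,L\le M$ with $K_{M}L\subseteq N$, I set $\bar K=\pi(K)$ and $\bar L=\pi(L)$. Quasi-projectivity makes $\End_R(M)\to\End_R(M/N)$ surjective, so $\bar K$ and $\bar L$ are fully invariant in $M/N$, and it remains to show $\bar K_{M/N}\bar L=0$; primeness of $M/N$ then forces $\bar K=0$ or $\bar L=0$, i.e. $K\subseteq N$ or $L\subseteq N$. So I fix $g\in\Hom_R(M/N,\bar K)$ and must manufacture, out of $g$, a homomorphism landing in $K$ itself. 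The decisive trick is to lift $g$ not through $\pi$ but through the projection $\pi_{K\cap N}\colon M\to M/(K\cap N)$: composing $g\pi$ with the isomorphism $\bar K=(K+N)/N\cong K/(K\cap N)\hookrightarrow M/(K\cap N)$ gives a homomorphism $M\to M/(K\cap N)$, which by quasi-projectivity lifts to some $\lambda\in\End_R(M)$ with $\pi_{K\cap N}\lambda$ equal to this composite. Because $\pi_{K\cap N}^{-1}(K/(K\cap N))=K$, the lift satisfies $\lambda(M)\subseteq K$, and applying the natural map $M/(K\cap N)\to M/N$ returns $\pi\lambda=g\pi$. Now $\lambda\in\Hom_R(M,K)$ gives $\lambda(L)\subseteq K_{M}L\subseteq N$, so $g(\bar L)=\pi(\lambda(L))=0$; as $g$ was arbitrary, $\bar K_{M/N}\bar L=0$, as required. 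The semiprime case is the same with $L=K$.

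The main obstacle is precisely the lifting just described. The naive lift of $g$ through $\pi$ only lands in $\pi^{-1}(\bar K)=K+N$, and one cannot in general force it into $K$ — which is what is needed so that $\lambda(L)\subseteq K_{M}L$ rather than merely $(K+N)_{M}L$ — unless $N\subseteq K$. Routing the lift through $M/(K\cap N)$ and exploiting the isomorphism $(K+N)/N\cong K/(K\cap N)$ is what repairs this, and quasi-projectivity (projectivity relative to $M\to M/(K\cap N)$) is exactly the ingredient that makes the lift available; this explains why the hypothesis is present in (2) but absent in (1).
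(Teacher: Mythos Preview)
Your argument is correct. The paper itself does not supply a proof of this proposition; it simply cites \cite[Proposition~18]{raggiprime} and \cite[Proposition~13]{raggisemiprime}, so there is nothing to compare against at the level of the present paper. What you have written is essentially a reconstruction of the arguments in those references. In particular, the point you identify as the main obstacle in part~(2) --- that a naive lift of $g$ through $\pi$ lands only in $K+N$ rather than $K$ --- is exactly the issue, and routing the lift through $M\to M/(K\cap N)$ via the second isomorphism theorem $(K+N)/N\cong K/(K\cap N)$ is the standard and correct way to handle it; quasi-projectivity is invoked in precisely the right place.
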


\begin{proof}
\cite[Proposition 18]{raggiprime} and \cite[Proposition 13]{raggisemiprime}.
\end{proof}

\begin{lemma}\label{maxprime}
Let $M$ be quasi-projective. If $N$ is fully invariant such that $N$ is maximal in the lattice of fully invariant submodules of $M$ then $N$ is prime in $M$.
\end{lemma}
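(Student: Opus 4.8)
The plan is to show that $M/N$ is a prime module and then invoke Proposition \ref{2}(2), since $M$ is quasi-projective. So suppose $K$ and $L$ are fully invariant submodules of $M$ with $K_M L \subseteq N$, and assume $K \not\subseteq N$; I must deduce $L \subseteq N$. First I would consider the submodule $K + N$: it is fully invariant (a sum of fully invariant submodules is fully invariant) and strictly contains $N$, so by maximality of $N$ in the lattice of fully invariant submodules we get $K + N = M$. The key step is then to compute $(K+N)_M L$ and show it is controlled by $N$. Using that the product $(-)_M(-)$ distributes over sums in the first variable — $(\sum K_i)_M L = \sum (K_i)_M L$, which follows directly from the definition $N_M K = \sum\{f(K)\mid f\in\Hom_R(M,N)\}$ and the fact that $\Hom_R(M,-)$ turns the inclusions into a sum decomposition, or more simply because every homomorphism out of $K+N$ restricts to $K$ and to $N$ — I get $M_M L = (K+N)_M L = K_M L + N_M L \subseteq N + N_M L$.

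Now I need two facts about $N_M L$ and $M_M L$. Since $N$ is fully invariant, $N_M L = \sum\{f(L) \mid f \in \Hom_R(M,N)\} \subseteq N$, because each such $f$ has image inside $N$. And $M_M L = \sum\{f(L) \mid f\in\End_R(M)\} \supseteq L$, taking $f$ to be the identity. Combining, $L \subseteq M_M L \subseteq N + N_M L \subseteq N$, which is exactly what we wanted. Hence $N$ is prime in $M$ by Proposition \ref{2}(2).

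The main obstacle is making sure the distributivity $(K+N)_M L = K_M L + N_M L$ is justified cleanly; this is where one must be slightly careful, but it is immediate from the definition of the product once one notes that for a sum $K + N$ of fully invariant submodules, $\Hom_R(M, K+N)$ contains all the maps landing in $K$ and all those landing in $N$, and conversely any $f \in \Hom_R(M, K+N)$ satisfies $f(L) \subseteq K_M L + N_M L$ provided $K$ and $N$ are themselves fully invariant (so that $\Hom_R(M,K)$ and $\Hom_R(M,N)$ genuinely capture the relevant maps after composing with the projections — this uses that $M/(K\cap N) \hookrightarrow M/K \oplus M/N$ type reasoning, or one simply cites \cite[Proposition 1.3]{PepeGab} for the properties of the product, which lists exactly this distributivity). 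Everything else is a short chain of inclusions, so I expect the write-up to be brief.
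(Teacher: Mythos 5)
Your argument does not actually follow the route you announce: you never show that $M/N$ is a prime module, and the closing appeal to Proposition \ref{2}(2) is vacuous, since what you verify is the definition of ``$N$ is prime in $M$'' directly. That by itself is harmless, but the verification has a genuine gap at its central step, namely the inclusion $(K+N)_M L \subseteq K_M L + N_M L$. From the definition $X_M L=\sum\{f(L)\mid f\in\Hom_R(M,X)\}$ only the reverse inclusion $K_M L + N_M L\subseteq (K+N)_M L$ is automatic. For the inclusion you need, you must take an arbitrary $f\colon M\to K+N$ (in your situation $K+N=M$, so $f$ is an arbitrary endomorphism, including the identity) and show $f(L)\subseteq K_M L+N_M L$; the natural way is to lift $f$ along the epimorphism $K\oplus N\twoheadrightarrow K+N$, and that is a relative-projectivity statement which quasi-projectivity does not provide: quasi-projectivity gives liftings only against quotient maps $M\to M/X$, not against epimorphisms from a direct sum of submodules onto $M$. (It would follow if $M$ were projective in $\sigma[M]$, i.e.\ $M^{(\mathcal I)}$-projective, which is exactly the stronger hypothesis this paper imposes in Lemma \ref{primmax} and Theorem \ref{stqdr}, where the product is used in earnest; citing \cite[Proposition 1.3]{PepeGab} does not help unless its hypotheses are checked against the mere quasi-projectivity assumed here.) Your two attempted justifications do not close the hole: ``every homomorphism out of $K+N$ restricts to $K$ and to $N$'' concerns maps \emph{out of} $K+N$, whereas the product is built from maps of $M$ \emph{into} $K+N$; and full invariance of $K$ and $N$ produces no ``projections'' of $K+N$ onto $K$ and $N$. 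The distributivity is genuinely not formal: for $M=\mathbb{Q}_{\mathbb{Z}}$ and $K_1,K_2$ the subrings of $\mathbb{Q}$ generated by the inverses of two complementary nonempty sets of primes, $K_1+K_2=\mathbb{Q}$ while $\Hom_{\mathbb{Z}}(\mathbb{Q},K_i)=0$, so $(K_1+K_2)_M L\supseteq L$ but $(K_1)_M L+(K_2)_M L=0$; this example does not satisfy your hypotheses, but it shows the step requires a real projectivity input that your sketch does not supply, and without it the chain $L\subseteq M_M L\subseteq K_M L+N_M L\subseteq N$ collapses.

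For comparison, the paper's proof avoids the product computation entirely: if $L/N$ is a fully invariant submodule of $M/N$, then $L$ is fully invariant in $M$ (only full invariance of $N$ is needed, since every endomorphism of $M$ induces one of $M/N$), so maximality of $N$ forces $M/N$ to have no nontrivial fully invariant submodules; such a module is trivially prime, because the only nontrivial case is $K=L=M/N$ and $(M/N)_{M/N}(M/N)\supseteq M/N\neq 0$ via the identity map; quasi-projectivity then enters only through Proposition \ref{2}(2). If you want to keep your direct computation, either strengthen the hypothesis to $M$ projective in $\sigma[M]$ (so that $f$ lifts along $K\oplus N\twoheadrightarrow K+N$ and the first-variable distributivity holds), or switch to the quotient argument.
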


\begin{proof}
Since $N$ is maximal in the lattice of fully invariant submodules of $M$, then $M/N$ has no nontrivial fully invariant submodules. It follows that $M/N$ is a prime module. By Proposition \ref{2}, $N$ is prime in $M$.
\end{proof}

\section{Abelian Endoregular Modules}

\begin{defn}
An $R$-module $M$ is called \emph{abelian endoregular} if $\End_R(M)$ is an abelian von Neumann regular ring.
\end{defn}

\begin{example}
\begin{enumerate}
\item[(i)] $R$ is an abelian von Neumann regular ring if and only if $R_R$ is an abelian endoregular module.
\item[(ii)] Consider the $\mathbb{Z}$-module $\mathbb{Q}_\mathbb{Z}$. We have that $\End_\mathbb{Z}(\mathbb{Q})=\mathbb{Q}$. Hence, $\mathbb{Q}_\mathbb{Z}$ is an abelian endoregular module.
\item[(iii)] Let $K$ be a field. Let $R=\prod_{i=1}^\infty K_i$ with $K_i=K$. Then, the ideal $\bigoplus_{i=1}^\infty K_i$ is an abelian endoregular $R$-module because $R=\End_R(\bigoplus_{i=1}^\infty K_i)$.
\item[(iv)] If $\{S_i\}_{i\in\mathcal{I}}$ is a family of simple modules nonisomorphic in pairs, then $M=\bigoplus_{i\in\mathcal{I}}S_i$ is abelian endoregular (see Proposition \ref{sums}). Moreover $\prod_{i\in\mathcal{I}}S_i$ is an abelian endoregular module. For, note that $\End_R(\prod_{i\in\mathcal{I}}S_i)\cong\prod_{i\in\mathcal{I}}\End_R(S_i)$.
\item[(v)] Let $K$ be a field and $A$ be a hereditary $K$-algebra. Then every indecomposable projective $A$-module is abelian endoregular. It follows from the fact that if $P$ is an indecomposable projective $A$-module then $\End_A(P)=K$ \cite[Ch.VII Corollary 1.5]{assemelements}.
\end{enumerate}
\end{example}

\begin{example}
Let $(X,\leq)$ be the partial ordered set
\[\xymatrix{ & 4\ar@{-}[dl]\ar@{-}[dr] & \\ 2\ar@{-}[dr] &  & 3\ar@{-}[dl] \\ & 1 & }\]
Then, $R=\I(X,\mathbb{Z})\cong\left(\begin{smallmatrix}
\mathbb{Z} & \mathbb{Z} & \mathbb{Z} & \mathbb{Z} \\
0 & \mathbb{Z} & 0 & \mathbb{Z} \\
0 & 0 & \mathbb{Z} & \mathbb{Z} \\
0 & 0 & 0 & \mathbb{Z}
\end{smallmatrix}\right)$ (see \cite[Proposition 1.2.7]{spiegelincidence}). Hence, $\End_R(\mathbb{Q}{(4)})\cong\mathbb{Q}$, by Proposition \ref{incend}. Thus, the right $R$-module $\mathbb{Q}{(4)}$ is an abelian endoregular module.
\end{example}

\begin{prop}
Let $(X,\mathcal{R})$ be an upper finite preordered set with an element $w\in X$ such that $w\mathcal{R}x$ for all $x\in X$. Let $R=\I(X,A)$ be the incidence algebra of $X$ over the ring $A$ and let $M_A$ be a cyclic right $A$-module. If $M_A$ is an abelian endoregular module then so is $M(X)_R$.
\end{prop}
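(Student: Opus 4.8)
The plan is to deduce this immediately from Proposition \ref{incend}. The statement imposes exactly the hypotheses of that proposition in its case (1): $(X,\mathcal{R})$ is an upper finite preordered set possessing an element $w$ with $w\mathcal{R}x$ for all $x\in X$, the ring $R$ is the incidence algebra $\I(X,A)$, and $M_A$ is a cyclic right $A$-module. Hence Proposition \ref{incend} applies and furnishes a ring isomorphism $\End_A(M)\cong\End_R(M(X))$. So the first step is simply to record this isomorphism.

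The second step is to observe that the class of abelian von Neumann regular rings is closed under ring isomorphism. Indeed, von Neumann regularity is the assertion that for each element $r$ there is an element $x$ with $rxr=r$, and the abelian condition is the assertion that every idempotent lies in the center; both properties are expressed purely in terms of the ring operations and therefore transfer verbatim along any ring isomorphism. By hypothesis $M_A$ is abelian endoregular, i.e.\ $\End_A(M)$ is abelian von Neumann regular; transporting along the isomorphism of the first step shows that $\End_R(M(X))$ is abelian von Neumann regular, which is precisely the statement that $M(X)_R$ is abelian endoregular.

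There is essentially no obstacle here beyond correctly invoking Proposition \ref{incend}: the only point requiring attention is to check that the standing hypotheses on $(X,\mathcal{R})$ and on $M_A$ in the present statement match those needed for the cyclic case of that proposition, which they do. I note that one could instead attempt a direct argument, analyzing $\End_R(M(X))$ through the matrix-type description of $\I(X,A)$ and tracking kernels, images and idempotents of $R$-endomorphisms of $M(X)$ in terms of those of $A$-endomorphisms of $M$; but this is exactly the content already packaged in Proposition \ref{incend}, so it would be redundant. (The same reasoning, using case (2) of Proposition \ref{incend}, would yield the analogous statement with $M_A$ replaced by a ring of fractions $AS^{-1}$.)
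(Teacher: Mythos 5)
Your proof is correct and follows exactly the paper's own argument: the paper proves this proposition simply by citing Proposition \ref{incend}, and your appeal to the ring isomorphism $\End_A(M)\cong\End_R(M(X))$ together with the (obvious but worth stating) invariance of the abelian regular property under ring isomorphism is precisely that argument, spelled out slightly more fully.
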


\begin{proof}
It follows from Proposition \ref{incend}.
\end{proof}

The following propositions which characterize abelian endoregular modules were proved in \cite{leemodules}.

\begin{prop}[Proposition 2.22, \cite{leemodules}]\label{phikerima}
The following conditions are equivalent for a module $M$:
\begin{enumerate}[label=\emph{(\alph*)}]
\item $M$ is abelian endoregular.
\item $M=\Ker\varphi\oplus\Img\varphi$ for any $\varphi\in\End_R(M)$.
\end{enumerate}
\end{prop}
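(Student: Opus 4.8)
The plan is to prove the two implications separately. In both directions the von Neumann regular half of the statement is controlled by the Azumaya criterion (Proposition~\ref{azu}), so the substance of the argument is to convert the word ``abelian'' into the module condition~(b).

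\emph{$(a)\Rightarrow(b)$.} Put $S=\End_R(M)$ and fix $\varphi\in S$; since $S$ is regular, choose $\psi\in S$ with $\varphi\psi\varphi=\varphi$. I would work with the two idempotents $e=\varphi\psi$ and $f=\psi\varphi$. A routine computation gives $e^2=e$, $f^2=f$, $\Img e=\Img\varphi$ and $\Ker f=\Ker\varphi$. Since $S$ is abelian, $e$ and $f$ are central, and combining this with the evident identities $e\varphi=\varphi$ and $\varphi f=\varphi$ yields $\varphi e=\varphi$ and $f\varphi=\varphi$. The relation $f\varphi=\varphi$ says that $f$ fixes $\Img\varphi$ pointwise, so $\Ker\varphi\cap\Img\varphi=\Ker f\cap\Img\varphi=0$; the relation $\varphi e=\varphi$ gives $\Ker e\subseteq\Ker\varphi$, so from the idempotent splitting $M=\Ker e\oplus\Img e=\Ker e\oplus\Img\varphi$ we obtain $M=\Ker\varphi+\Img\varphi$. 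Putting these together, $M=\Ker\varphi\oplus\Img\varphi$.

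\emph{$(b)\Rightarrow(a)$.} Assume $M=\Ker\varphi\oplus\Img\varphi$ for every $\varphi\in S$. In particular $\Ker\varphi\dleq M$ and $\Img\varphi\dleq M$, so $S$ is von Neumann regular by Proposition~\ref{azu}; it remains to see that every idempotent of $S$ is central. The key observation is that (b) rules out nonzero square-zero endomorphisms: if $\varphi^2=0$ then $\Img\varphi\subseteq\Ker\varphi$, and since the sum is direct, $\varphi=0$. Given an idempotent $e\in S$ and an arbitrary $g\in S$, the endomorphism $(1-e)ge$ squares to $0$ because $e(1-e)=0$, hence $(1-e)ge=0$, which is exactly the statement $g(eM)\subseteq eM$; thus $eM$ is fully invariant, and applying the same argument to $1-e$ makes $\Ker e=(1-e)M$ fully invariant as well. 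Since an idempotent $e$ is central precisely when both $eM$ and $(1-e)M$ are fully invariant, $e$ is central. Hence $S$ is abelian regular, i.e.\ $M$ is abelian endoregular.

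The only step requiring genuine care is the passage in $(a)\Rightarrow(b)$ from the existence of \emph{some} complements to $\Ker\varphi$ and $\Img\varphi$ to the \emph{internal} decomposition $M=\Ker\varphi\oplus\Img\varphi$; this is exactly where centrality of $\varphi\psi$ and $\psi\varphi$ is used, through the identities $f\varphi=\varphi$ and $\varphi e=\varphi$. Everything else reduces to elementary bookkeeping with idempotents together with an appeal to Proposition~\ref{azu}.
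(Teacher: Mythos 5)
Your argument is correct. The paper does not prove Proposition \ref{phikerima} itself---it quotes it from \cite{leemodules}---and your proof is essentially the standard one behind that citation: Azumaya's criterion (Proposition \ref{azu}) handles regularity, centrality of the idempotents $\varphi\psi$ and $\psi\varphi$ upgrades the summand conditions to the internal decomposition $M=\Ker\varphi\oplus\Img\varphi$, and the square-zero computation $((1-e)ge)^2=0$ recovers centrality of idempotents in the converse, the same device the paper itself employs (via Goodearl's Lemma 3.1) in the proof of Proposition \ref{subfi}.
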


\begin{cor}[Corollary 2.24, \cite{leemodules}]\label{ephikerima}
The following conditions are equivalent for a module $M$ and $e^2=e\in\End_R(M)$:
\begin{enumerate}[label=\emph{(\alph*)}]
\item $eM$ is abelian endoregular.
\item $M=\Ker(e\varphi e)\oplus\Img(e\varphi e)$ for any $\varphi\in\End_R(M)$.
\end{enumerate}
\end{cor}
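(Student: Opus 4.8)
The plan is to deduce this from Proposition \ref{phikerima} applied to the direct summand $eM$, via the standard identification of $\End_R(eM)$ with the corner ring $eSe$, where $S=\End_R(M)$. Under this identification an endomorphism $\psi$ of $eM$ corresponds to $e\varphi e$ (for a suitable $\varphi\in S$) restricted to $eM$, and conversely every $\varphi\in S$ gives such a $\psi$; moreover $\psi$ ranges over all of $\End_R(eM)$ as $\varphi$ ranges over $S$, since $eSe=\{e\varphi e\mid \varphi\in S\}$. Hence by Proposition \ref{phikerima}, condition (a) holds if and only if $eM=\Ker(\psi)\oplus\Img(\psi)$ for all $\psi\in\End_R(eM)$, i.e. if and only if $eM=\Ker(e\varphi e|_{eM})\oplus\Img(e\varphi e|_{eM})$ for all $\varphi\in S$.

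Next I would record the relation between the kernel and image of $e\varphi e$ taken in $S$ and those of its restriction to $eM$. Writing each $m\in M$ as $em+(1-e)m$ and using $e(1-e)=0$, one sees that $e\varphi e$ annihilates $(1-e)M$ and agrees on $eM$ with $e\varphi e|_{eM}$; consequently
\[
\Img(e\varphi e)=\Img(e\varphi e|_{eM})\subseteq eM,\qquad \Ker(e\varphi e)=\Ker(e\varphi e|_{eM})\oplus (1-e)M .
\]
Therefore the decomposition $M=\Ker(e\varphi e)\oplus\Img(e\varphi e)$ is equivalent to $M=\Ker(e\varphi e|_{eM})\oplus (1-e)M\oplus\Img(e\varphi e|_{eM})$. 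Since $M=eM\oplus(1-e)M$ with both $\Ker(e\varphi e|_{eM})$ and $\Img(e\varphi e|_{eM})$ lying inside $eM$, a short computation using $eM\cap(1-e)M=0$ (equivalently, the modular law) shows that this last condition is equivalent to $eM=\Ker(e\varphi e|_{eM})\oplus\Img(e\varphi e|_{eM})$. Combining this with the first paragraph yields the equivalence of (a) and (b).

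The main point to be careful about is the lattice-theoretic bookkeeping in the second step: one must verify both inclusions in each displayed equality — in particular that $(1-e)M\subseteq\Ker(e\varphi e)$, and that the sum $\Ker(e\varphi e|_{eM})+(1-e)M$ is direct because it sits inside $eM\oplus(1-e)M$ with $\Ker(e\varphi e|_{eM})\subseteq eM$ — and then pass correctly between a direct decomposition of $M$ and one of $eM$. Everything else is formal: the identification $\End_R(eM)\cong eSe$ together with the invocation of Proposition \ref{phikerima}.
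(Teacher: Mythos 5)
The paper gives no proof of this corollary at all---it is quoted verbatim from \cite[Corollary 2.24]{leemodules}---so there is no in-paper argument to compare against; your blind derivation is correct and is the natural one. The reduction of (a) to Proposition \ref{phikerima} for the module $eM$ via the corner-ring identification $\End_R(eM)\cong eSe$ is sound, and the bookkeeping you flag as the delicate point does check out: $\Img(e\varphi e)=\Img\bigl(e\varphi e|_{eM}\bigr)\subseteq eM$, $\Ker(e\varphi e)=\Ker\bigl(e\varphi e|_{eM}\bigr)\oplus(1-e)M$ since $e\varphi e$ kills $(1-e)M$, and the passage between $M=\Ker(e\varphi e)\oplus\Img(e\varphi e)$ and $eM=\Ker\bigl(e\varphi e|_{eM}\bigr)\oplus\Img\bigl(e\varphi e|_{eM}\bigr)$ follows from $M=eM\oplus(1-e)M$ together with the fact that both summands on the right lie in $eM$.
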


\begin{cor}\label{dirsummand}
Let $M$ be an abelian endoregular module. Then every direct summand of $M$ is abelian endoregular.
\end{cor}

\begin{proof}
Let $N\dleq M$. There exists an idempotent $e\in\End_R(M)$ such that $eM=N$. Consider $e\varphi e$ with $\varphi\in\End_R(M)$. Since $M$ is abelian endoregular, $M=\Ker(e\varphi e)\oplus\Img(e\varphi e)$ by Proposition \ref{phikerima}. Hence $eM=N$ is abelian endoregular by Corollary \ref{ephikerima}.
\end{proof}

Given a two-sided ideal $I$ of a ring $R$, it is said $I$ is regular if for any $x\in I$ there exists $y\in I$ such that $xyx=x$. It is known that $R$ is von Neumann regular if and only if $I$ and $R/I$ are regular. It is also known that if $R$ is abelian regular and $I$ is a two-sided ideal of $R$ then $R/I$ is an abelian regular ring. In the next proposition we write down last ideas in the module-theoretic context.

\begin{prop}
Let $M$ be a quasi-projective module and $N$ be a fully invariant submodule of $M$. The following conditions are equivalent:
\begin{enumerate}[label=\emph{(\alph*)}]
\item $M$ is abelian endoregular.
\item $M/N$ is abelian endoregular and for any $\varphi\in\Hom_R(M,N)$, $\Ker\varphi$ and $\Img\varphi$ are direct summands of $M$ and $N=(\Ker\varphi\cap N)\oplus\Img\varphi$ .
\end{enumerate}
\end{prop}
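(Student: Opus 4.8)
The plan is to translate the statement into a fact about the ring $S=\End_R(M)$ and the subset $I=\Hom_R(M,N)=\{\varphi\in S\mid\varphi(M)\subseteq N\}$, which is a two-sided ideal of $S$ precisely because $N$ is fully invariant. First I would record, using that $M$ is quasi-projective and $N$ is fully invariant, the ring isomorphism $S/I\cong\End_R(M/N)$: the map $\varphi\mapsto\overline{\varphi}$, $\overline{\varphi}(m+N)=\varphi(m)+N$, is a well-defined ring homomorphism with kernel exactly $I$, and it is surjective because every $g\in\End_R(M/N)$ lifts along the projection $\pi\colon M\to M/N$ to some $\varphi\in S$ with $\pi\varphi=g\pi$. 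Thus condition (a) says ``$S$ is abelian regular'', and the clause ``$M/N$ is abelian endoregular'' in (b) says ``$S/I$ is abelian regular''. I would also note, via Proposition \ref{azu}, that ``$\Ker\varphi\dleq M$ and $\Img\varphi\dleq M$ for every $\varphi\in I$'' is equivalent to ``$I$ is a regular ideal of $S$'': from $\varphi\psi\varphi=\varphi$ with $\psi\in S$ one obtains $\varphi\psi'\varphi=\varphi$ with $\psi'=\psi\varphi\psi\in I$.

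For (a)$\Rightarrow$(b): if $S$ is abelian regular then so is the quotient $S/I\cong\End_R(M/N)$ by the two-sided ideal $I$ (recalled just before the statement), so $M/N$ is abelian endoregular. For $\varphi\in I\subseteq S$, Proposition \ref{phikerima} gives $M=\Ker\varphi\oplus\Img\varphi$; since $\Img\varphi\subseteq N$, intersecting with $N$ and applying the modular law yields $N=(\Ker\varphi\cap N)\oplus\Img\varphi$. This is all of (b).

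For (b)$\Rightarrow$(a): from (b), $I$ is a regular ideal (by the remark above) and $S/I$ is regular, so $S$ is regular by the recalled criterion. It remains to show every idempotent of $S$ is central, and this is where the decomposition clause of (b) is used. Let $e=e^2\in S$ and $r\in S$, and put $x=er(1-e)$, so $x^2=0$. Its image $\overline{x}\in S/I$ satisfies $\overline{x}^2=0$, and $S/I$, being abelian regular, is reduced, so $\overline{x}=0$; hence $x\in I$, i.e.\ $\Img x\subseteq N$. Together with $\Img x\subseteq\Ker x$ (from $x^2=0$) this gives $\Img x\subseteq\Ker x\cap N$, so the direct sum $N=(\Ker x\cap N)\oplus\Img x$ forces $\Img x=0$, that is, $er(1-e)=0$. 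Symmetrically $(1-e)re=0$, so $er=ere=re$ and $e$ is central; hence $S$ is abelian regular and $M$ is abelian endoregular.

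Most of this is routine: the identification $S/I\cong\End_R(M/N)$, the modular-law bookkeeping, and the standard passage of regularity and abelian regularity through a two-sided ideal. The step that requires care — the main obstacle — is the use of the clause $N=(\Ker\varphi\cap N)\oplus\Img\varphi$ in the converse: one must first observe that any square-zero endomorphism of $M$ automatically lands in $I$, because $S/I$ is reduced, so that this decomposition becomes applicable and annihilates it. In other words, this third clause of (b) is exactly the module-theoretic translation of the absence of nonzero square-zero elements in the ideal $I$, which is what upgrades ``$S$ regular with $S/I$ abelian regular'' to ``$S$ abelian regular''.
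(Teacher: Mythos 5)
Your proof is correct and follows essentially the same route as the paper's: the isomorphism $\End_R(M)/\Hom_R(M,N)\cong\End_R(M/N)$ coming from quasi-projectivity and full invariance, regularity of the ideal $\Hom_R(M,N)$ via Azumaya's criterion and the element $\psi\varphi\psi$, and the key observation that square-zero endomorphisms land in $\Hom_R(M,N)$ because $\End_R(M/N)$ is reduced and are then killed by the decomposition $N=(\Ker\varphi\cap N)\oplus\Img\varphi$. The only cosmetic difference is the finish of (b)$\Rightarrow$(a): the paper shows $\End_R(M)$ has no nonzero nilpotent elements and invokes \cite[Theorem 3.2]{goodearlneumann}, whereas you apply the square-zero argument to $er(1-e)$ and $(1-e)re$ and verify centrality of idempotents directly.
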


\begin{proof}
Given any $\varphi\in\End_R(M)$, $\varphi$ defines an endomorphism $\overline{\varphi}\in\End_R(M/N)$ because $N$ is fully invariant. Hence we have a ring homomorphism $\Theta:\End_R(M)\to\End_R(M/N)$ given by $\Theta(\varphi)=\overline{\varphi}$. Note that $\Ker\Theta=\Hom_R(M,N)$ and $\Theta$ is surjective because $M$ is quasi-projective.

(a)$\Rightarrow$(b) Let $\varphi:M\to N$ be any homomorphism. By Proposition \ref{phikerima}, $M=\Ker\varphi\oplus\Img\varphi$. It follows that $N=(\Ker\varphi\cap N)\oplus\Img\varphi$. Since $\Theta$ is surjective, $\End_R(M/N)$ is an abelian regular ring. Thus $M/N$ is an abelian regular module.

(b)$\Rightarrow$(a) Let $\varphi\in\Hom_R(M,N)$. Since $\Ker\varphi$ and $\Img\varphi$ are direct summands of $M$, there exists $\psi\in\End_R(M)$ such that $\varphi\psi\varphi=\varphi$ by Proposition \ref{azu}. Note that $\psi\varphi\psi\in\Hom_R(M,N)$ because $N$ is fully invariant in $M$. Hence $\varphi(\psi\varphi\psi)\varphi=\varphi$. This implies that the two-sided ideal $\Hom_R(M,N)$ of $\End_R(M)$ is regular. Since $\End_R(M/N)\cong\End_R(M)/\Hom_R(M,N)$ is a von Neumann regular ring, $\End_R(M)$ is a von Neumann regular ring. Now, let $\varphi\in\End_R(M)$ such that $\varphi^2=0$. Hence $0=\Theta(\varphi^2)=\Theta(\varphi)^2$. Since $\End_R(M/N)$ is abelian regular, $\End_R(M/N)$ has no nonzero nilpotent elements \cite[Theorem 3.2]{goodearlneumann}. Hence $\Theta(\varphi)=0$ and so, $\varphi\in\Hom_R(M,N)$. Therefore $\Img\varphi\subseteq\Ker\varphi\cap N$ because $\varphi^2=0$. By hypothesis we have that $N=(\Ker\varphi\cap N)\oplus \Img\varphi$. Hence, $\Img\varphi=0$. Thus, $\End_R(M)$ is a regular ring with no nonzero nilpotents elements, that is, $\End_R(M)$ is an abelian regular ring by \cite[Theorem 3.2]{goodearlneumann}. Hence $M$ is an abelian endoregular module.
\end{proof}

In the next proposition we characterize an abelian endoregular module $M$ in terms of those submodules which are generated by $M$.

\begin{prop}\label{subfi}
The following conditions are equivalent for a module $M$:
\begin{enumerate}[label=\emph{(\alph*)}]
\item $M$ is abelian endoregular.
\item $M$ is endoregular and every $M$-generated submodule of $M$ is fully invariant.
\end{enumerate}
\end{prop}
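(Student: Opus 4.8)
The plan is to prove both implications using Azumaya's characterization (Proposition \ref{azu}), the characterization of abelian endoregularity via $M = \Ker\varphi \oplus \Img\varphi$ (Proposition \ref{phikerima}), and the fact that a von Neumann regular ring is abelian if and only if it has no nonzero nilpotent elements (\cite[Theorem 3.2]{goodearlneumann}). The key translation to keep in mind throughout is that for $\varphi \in \End_R(M)$, the image $\Img\varphi$ is precisely an $M$-generated submodule of $M$, and more generally any $M$-generated submodule $N \leq M$ is of the form $N = \sum_{i} \Img\varphi_i$ for endomorphisms $\varphi_i \in \End_R(M)$; conversely, full invariance of such sums is what controls centrality of idempotents.

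For (a)$\Rightarrow$(b): assume $M$ is abelian endoregular, so $S = \End_R(M)$ is abelian regular; in particular $S$ is regular, hence $M$ is endoregular by definition. Now let $N \leq M$ be $M$-generated and let $\psi \in S$; I must show $\psi(N) \subseteq N$. Write $N = \sum_{i \in \mathcal I} \Img\varphi_i$ with $\varphi_i \in S$, so it suffices to show $\psi(\Img\varphi_i) = \Img(\psi\varphi_i) \subseteq \Img\varphi_i$ for each $i$. Thus the problem reduces to the single-generator case: for all $\varphi, \psi \in S$, $\Img(\psi\varphi) \subseteq \Img\varphi$. Since $M$ is abelian endoregular, $M = \Ker\varphi \oplus \Img\varphi$; let $e \in S$ be the idempotent with $\Img e = \Img\varphi$ and $\Ker e = \Ker\varphi$. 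Because $S$ is abelian, $e$ is central, so $\psi\varphi = \psi e \varphi = e \psi \varphi$ (using $\varphi = e\varphi$), whence $\Img(\psi\varphi) \subseteq \Img e = \Img\varphi$. This gives full invariance of $N$.

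For (b)$\Rightarrow$(a): assume $M$ is endoregular and every $M$-generated submodule is fully invariant. Since $S$ is already regular, by \cite[Theorem 3.2]{goodearlneumann} it suffices to show $S$ has no nonzero nilpotent elements, and for that it is enough (again by \cite[Theorem 3.2]{goodearlneumann}, or by a standard reduction) to show every idempotent $e \in S$ is central. Let $e^2 = e \in S$ and $\psi \in S$. The submodule $\Img e = eM$ is $M$-generated (it is the image of the endomorphism $e$), hence fully invariant, so $\psi e M \subseteq e M$, which says $e \psi e = \psi e$. Similarly $\Img(1-e) = (1-e)M$ is $M$-generated and fully invariant, so $\psi(1-e)M \subseteq (1-e)M$, giving $(1-e)\psi(1-e) = \psi(1-e)$, i.e. $e\psi(1-e) = 0$, i.e. $e\psi = e\psi e$. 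Combining, $e\psi = e\psi e = \psi e$, so $e$ is central. Hence $S$ is abelian regular and $M$ is abelian endoregular. The main obstacle — and it is a mild one — is making sure the reduction in each direction from arbitrary $M$-generated submodules down to the images $eM$ and $(1-e)M$ (respectively, down to single $\Img\varphi$) is airtight; everything else is a short computation inside $S$ using centrality of the relevant idempotent and the endoregular hypothesis that kernels and images split.
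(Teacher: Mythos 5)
Your proof is correct, and the direction (b)$\Rightarrow$(a) is essentially the paper's argument: both establish $e\psi e=\psi e$ from full invariance of $eM$; the only difference is that you also use full invariance of $(1-e)M$ to get $e\psi=e\psi e$ and conclude centrality by direct computation, whereas the paper deduces $(1-e)Se=0$ and cites \cite[Lemma 3.1]{goodearlneumann} (your incidental appeal to the no-nilpotents criterion is an unnecessary detour, since ``every idempotent central'' is already the definition of abelian, but it does no harm). The direction (a)$\Rightarrow$(b) is where you genuinely diverge: the paper treats the whole trace at once, noting that $\Hom_R(M,N)$ is a right ideal of $S=\End_R(M)$, hence two-sided because one-sided ideals of an abelian regular ring are two-sided \cite[Theorem 3.2]{goodearlneumann}, so $N=\Hom_R(M,N)M$ is fully invariant; you instead write $N=\sum_i\Img\varphi_i$ with $\varphi_i\in S$ and check invariance generator by generator, using Proposition \ref{phikerima} to realize $\Img\varphi_i=e_iM$ for an idempotent $e_i$ and then centrality of $e_i$ to get $\Img(\psi\varphi_i)\subseteq e_iM$. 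Your route is more elementary and self-contained (it needs only the definition of abelian plus the splitting of images, not the structure theorem on one-sided ideals), at the cost of an extra reduction step; the paper's route is slicker and packages the same idea into the statement that the trace ideal is two-sided, which is also the viewpoint it reuses elsewhere (e.g.\ in Theorem \ref{limit}).
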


\begin{proof}
(a)$\Rightarrow$(b) By hypothesis $M$ is endoregular. Let $N$ be an $M$-generated submodule of $M$ and set $S=\End_R(M)$. Consider the right ideal $\Hom_R(M,N)$ of $S$. Since $S$ is an abelian regular ring, $\Hom_R(M,N)$ is a two-sided ideal by \cite[Theorem 3.2]{goodearlneumann}. This implies that $\Hom_R(M,N)M$ is a fully invariant submodule of $M$. Since $N$ is $M$-generated, $N=\Hom_R(M,N)M$. Thus, $N$ is fully invariant.

(b)$\Rightarrow$(a) Let $e\in S$ be an idempotent. By hypothesis $eM$ is fully invariant. Let $\varphi\in S$. Then $\varphi e(M)\leq eM$. This implies that $e\varphi e=\varphi e$. Hence, $(1-e)\varphi e=(1-e)e\varphi e=0$. Thus $(1-e)Se=0$. By \cite[Lemma 3.1]{goodearlneumann} $e$ is a central idempotent. Therefore, $M$ is abelian endoregular.
\end{proof}


\begin{cor}
Every direct summand of an abelian endoregular module is fully invariant.	
\end{cor}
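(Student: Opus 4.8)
The plan is to derive this directly from the description of abelian endoregular modules already established, so the proof should be a short corollary rather than a fresh argument. Two routes are available, and I would take the more self-contained one through central idempotents.

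First I would write a direct summand $N$ of $M$ as $N = eM$ for some idempotent $e \in S = \End_R(M)$; this is just the standard correspondence between direct summands and idempotents in the endomorphism ring. Since $M$ is abelian endoregular, $S$ is by definition an abelian von Neumann regular ring, so every idempotent of $S$ is central; in particular $e$ is central.

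The computation is then immediate: for any $\varphi \in S$ we have
\[
\varphi(N) = \varphi(eM) = (\varphi e)(M) = (e\varphi)(M) = e\bigl(\varphi(M)\bigr) \subseteq eM = N,
\]
so $N \leq_{fi} M$. I would mention, as an alternative, that one can instead quote Proposition \ref{subfi}: the projection $M \to N$ along a complement is an epimorphism, so $N$ is (finitely) $M$-generated, and Proposition \ref{subfi} then gives that $N$ is fully invariant.

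There is no real obstacle here — the content is entirely carried by the definition of "abelian" for $\End_R(M)$ (or by Proposition \ref{subfi}), and the only thing to be careful about is not to conflate this corollary with the converse direction, which of course fails in general (a fully invariant submodule need not be a direct summand, and in any case arbitrary modules have fully invariant summands without being abelian endoregular).
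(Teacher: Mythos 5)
Your proof is correct. The paper states this as an immediate corollary of Proposition \ref{subfi}, so its intended argument is the one you mention only as an alternative: a direct summand $N$ of $M$ is $M$-generated (the projection onto $N$ along a complement is an epimorphism), and Proposition \ref{subfi}(a)$\Rightarrow$(b) then says every $M$-generated submodule is fully invariant. Your primary route, via the central idempotent $e$ with $N=eM$ and the computation $\varphi(eM)=(\varphi e)M=(e\varphi)M\subseteq eM$, is a genuinely different and in fact more elementary derivation: it uses only that idempotents of $\End_R(M)$ are central, so it proves the stronger statement that direct summands are fully invariant in any module whose endomorphism ring is abelian, with no regularity needed. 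What the paper's route buys is economy within its own development (the corollary drops out of the characterization it has just proved); what yours buys is independence from that characterization and a sharper hypothesis. Your closing caution about not conflating the statement with its (false) converse is apt, though not needed for the proof itself.
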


The following proposition extends \cite[Theorem 3.4]{goodearlneumann}.

\begin{prop}\label{propinc}
The following conditions are equivalent for an endoregular module $M$:
\begin{enumerate}[label=\emph{(\alph*)}]
\item $M$ is an abelian endoregular module.
\item Two isomorphic $M$-generated submodules of $M$ are equal.
\item For an $M$-generated submodule $B$ of $M$ such that $B\oplus B$ embeds in $M$, $B=0$.
\item For two $M$-generated submodules $A,B$ of $M$ such that $A\cap B=0$, $\Hom_R(A,B)=0$.
\item The lattice of direct summands of $M$ is distributive.
\end{enumerate}
\end{prop}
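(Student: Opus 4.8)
The plan is to establish the cycle (a)$\Rightarrow$(b)$\Rightarrow$(c)$\Rightarrow$(d)$\Rightarrow$(a) and, separately, the equivalence (a)$\Leftrightarrow$(e). Two features of the endoregular module $M$ will be used repeatedly: a submodule is finitely $M$-generated exactly when it is a direct summand (Proposition \ref{finmsumand}), and, since $M$ has SSP and SIP (Lemma \ref{endosip}), the direct summands of $M$ form a sublattice of the modular lattice of submodules, hence a \emph{modular} lattice --- which is what makes (e) tractable via the classical fact that a modular lattice is distributive iff it contains no diamond ($M_3$) sublattice. For (a)$\Rightarrow$(b): given an isomorphism $\beta\colon A\to B$ of $M$-generated submodules I would show $A\subseteq B$ and conclude by symmetry. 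Any $a\in A$ lies in some $A_0=g_1(M)+\dots+g_n(M)$ with $g_i\in\Hom_R(M,A)$, so $A_0$, and likewise $\beta(A_0)=\beta g_1(M)+\dots+\beta g_n(M)$, is finitely $M$-generated and hence a direct summand of $M$; by Proposition \ref{subfi} both are fully invariant. Extending $\beta|_{A_0}\colon A_0\to\beta(A_0)$ to an element of $\End_R(M)$ (possible since $A_0$ is a summand) and using full invariance of $A_0$ gives $\beta(A_0)\subseteq A_0$; doing the same with the inverse isomorphism and full invariance of $\beta(A_0)$ gives the reverse inclusion, so $A_0=\beta(A_0)\subseteq B$ and $a\in B$.

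The remaining implications of the cycle are quicker. For (b)$\Rightarrow$(c): if $B$ is $M$-generated and $B\oplus B$ embeds in $M$, the images of the two copies of $B$ are isomorphic $M$-generated submodules of $M$ with zero intersection, so by (b) they coincide and hence vanish, giving $B=0$. For (c)$\Rightarrow$(d): if $f\colon A\to B$ were nonzero with $A\cap B=0$, shrink $A$ to a finitely $M$-generated (hence summand) submodule $A'$ on which $f$ is still nonzero; then $\Ker(f|_{A'})$ is a summand of $A'$ by Proposition \ref{kerdirecsum}(2), and splitting it off gives a nonzero summand $A_1$ of $M$ with $f|_{A_1}$ injective, so $A_1\cong f(A_1)\le B$ with $A_1\cap f(A_1)=0$; thus $A_1\oplus A_1$ embeds in $M$ and, $A_1$ being $M$-generated, (c) forces $A_1=0$ --- a contradiction. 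For (d)$\Rightarrow$(a): for an idempotent $e\in S=\End_R(M)$ and any $\varphi\in S$, put $\theta=(1-e)\varphi e$; then $\Img\theta$ is $M$-generated, $\Img\theta\cap eM=0$, and $\theta|_{eM}$ maps onto $\Img\theta$, so (d) forces $\theta|_{eM}=0$, whence $\theta=\theta e=0$. Hence $(1-e)Se=0$, so $e$ is central by \cite[Lemma 3.1]{goodearlneumann}, and since $S$ is already von Neumann regular it is abelian regular.

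Finally, for (a)$\Rightarrow$(e): in the abelian regular ring $S$ each direct summand of $M$ equals $eM$ for a \emph{unique} idempotent $e$, and $N\mapsto e_N$ is then a lattice isomorphism from the summands of $M$ onto the idempotents of $S$, which form a Boolean algebra, hence a distributive lattice. For (e)$\Rightarrow$(a) I would argue contrapositively: if $S$ is not abelian it contains, being regular, a nonzero $a$ with $a^2=0$ by \cite[Theorem 3.2]{goodearlneumann}; since $M$ is endoregular, $\Ker a$ and $\Img a$ are summands with $\Img a\subseteq\Ker a$, and writing $M=\Ker a\oplus Q$ the map $a$ restricts to an isomorphism $Q\xrightarrow{\,\sim\,}\Img a=:P$, so inside the summand $P\oplus Q$ the three submodules $P$, $Q$ and $\Delta=\{p+(a|_Q)^{-1}(p):p\in P\}$ are pairwise distinct direct summands of $M$ with pairwise intersection $0$ and pairwise sum $P\oplus Q$ --- a diamond ($M_3$) sublattice of the lattice of direct summands, contradicting (e). The main obstacle, in my view, is the mismatch between being ``$M$-generated'' and being a direct summand: the convenient manipulations (splitting off a kernel, extending a partial isomorphism to $\End_R(M)$, transferring full invariance) are only available once one has passed to a \emph{finitely} $M$-generated submodule via Proposition \ref{finmsumand}, so organizing the reductions in (a)$\Rightarrow$(b) and (c)$\Rightarrow$(d) around that point is the crux, together with exhibiting the forbidden $M_3$ in (e)$\Rightarrow$(a).
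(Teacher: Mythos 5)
Your proof is correct, and for the cycle (a)$\Rightarrow$(b)$\Rightarrow$(c)$\Rightarrow$(d)$\Rightarrow$(a) it is essentially the paper's own argument: the paper likewise reduces to finitely $M$-generated submodules via Proposition \ref{finmsumand}, splits off the kernel with Proposition \ref{kerdirecsum} in (c)$\Rightarrow$(d), and in (d)$\Rightarrow$(a) identifies $\Hom_R(eM,(1-e)M)$ with $(1-e)Se$ and quotes \cite[Lemma 3.1]{goodearlneumann}. The only stylistic difference in (a)$\Rightarrow$(b) is that you route through Proposition \ref{subfi} (full invariance of $M$-generated submodules) and extend the partial isomorphism to an endomorphism, whereas the paper works directly with the idempotents $e,f$ generating the two finitely $M$-generated pieces and uses centrality to force $e=f$; these are the same computation in different clothing. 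Where you genuinely depart is (a)$\Leftrightarrow$(e): the paper disposes of it in one line by asserting that the proof of \cite[Theorem 3.4]{goodearlneumann} transfers, while you give a self-contained module-theoretic argument --- (a)$\Rightarrow$(e) via the order isomorphism $N\mapsto e_N$ onto the Boolean algebra of central idempotents of $\End_R(M)$, and (e)$\Rightarrow$(a) by manufacturing a diamond $\{0,\,P,\,Q,\,\Delta,\,P\oplus Q\}$ of direct summands from a nonzero square-zero endomorphism $a$ (using \cite[Theorem 3.2]{goodearlneumann} to produce $a$, and endoregularity together with SSP/SIP to see that $P=\Img a$, a complement $Q$ of $\Ker a$, and the graph $\Delta$ of $(a|_Q)^{-1}$ are all summands, $\Delta$ for instance being the image of an endomorphism of $M$). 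This buys a proof that does not lean on the ring-theoretic reference and makes explicit which module facts replace Goodearl's principal right ideals; the small extra cost is verifying the uniqueness of the central idempotent with $eM=N$ and that $\Delta$ is indeed a summand, both routine. One cosmetic point: your appeal to modularity and the ``no $M_3$ sublattice'' characterization is not needed, since in (e)$\Rightarrow$(a) you only use the trivial direction that a lattice containing a diamond is not distributive.
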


\begin{proof}
(a)$\Rightarrow$(b) Let $N$ and $K$ be two nonzero $M$-generated submodules of $M$ and let $\theta:N\to K$ be an isomorphism. Since $N$ is $M$-generated, $\Hom_R(M,N)\neq 0$. Let $0\neq\varphi:M\to N$. Set $A=\varphi(M)\leq N$ and $B=\theta(A)\leq K$. Hence, $A\cong B$. Since $A$ is finitely $M$-generated, $A\dleq M$ (Proposition \ref{finmsumand}). Therefore, there exists an idempotent $e\in \End_R(M)$ such that $A=eM$. Analogously there exists an idempotent $f\in\End_R(M)$ such that $B=fM$. Note that $(\theta e)M=fM$ and $(\theta^{-1}f)M=eM$. Since $M$ is abelian endoregular, we have
\[f=\theta e=\theta e e=e\theta e=ef\;\text{and;}\]
\[e=\theta^{-1}f=\theta^{-1}f f=f\theta^{-1} f=fe=ef.\]
Thus, $e=f$. This implies $\varphi(M)=A=eM=fM=B\leq K$ and so $N=\Hom_R(M,N)M\leq K$. Analogously, $K\leq N$. Thus $N=K$.

(b)$\Rightarrow$(c) It is clear.

(c)$\Rightarrow$(d) Suppose $\Hom_R(A,B)\neq 0$ and let $0\neq\varphi:A\to B$. Since $A$ is $M$-generated, there exists $\psi:M\to A$ such that $\varphi\psi\neq 0$ and $\psi(M)\dleq M$. We have $M$ is an endoregular module, then by Proposition \ref{kerdirecsum} $\Ker\varphi\cap \psi(M)\dleq\psi(M)$, that is, $\psi(M)=(\Ker\varphi\cap \psi(M))\oplus C$ with $C\cong\varphi\psi(M)$. Thus,
\[C\oplus C\cong C\oplus \varphi\psi(M)\leq A\oplus B\leq M.\]
By hypothesis, $C=0$. A contradiction. Thus $\Hom_R(A,B)=0$.

(d)$\Rightarrow$(a) Set $S=\End_R(M)$ and let $e\in S$ be an idempotent. Then $M=eM\oplus (1-e)M$. We have that
\[\Hom_R(eM,(1-e)M)=(1-e)Se.\]
By hypothesis $\Hom_R(eM,(1-e)M)=0$, hence $e$ is central by \cite[Lemma 3.1]{goodearlneumann}.

(a)$\Leftrightarrow$(e) The proof of $(a)\Leftrightarrow(e)$ in \cite[Theorem 3.4]{goodearlneumann} works here.
\end{proof}

Recall that a module $M$ is said to be \emph{unit endoregular} if $\End_R(M)$ is a unit regular ring. In \cite{leeunit} were introduce these modules and many properties were presented. It is clear that every abelian endoregular module is unit endoregular since abelian regular rings are unit regular. The next propositions provide a partial converse.

\begin{prop}
Let $M$ be a unit endoregular module.
If $M=\Img\varphi+\Ker\varphi$ for any $\varphi \in \End_R(M)$, then $M$ is an abelian endoregular module.
\end{prop}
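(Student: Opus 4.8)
The plan is to show that $S:=\End_R(M)$ is an abelian von Neumann regular ring. Since $M$ is unit endoregular, $S$ is a unit regular ring, hence von Neumann regular; so by \cite[Theorem 3.2]{goodearlneumann} it suffices to prove that $S$ has no nonzero nilpotent elements, i.e.\ that $S$ is reduced.

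For this, suppose $S$ had a nonzero nilpotent element. Then it would contain a nonzero $\varphi\in S$ with $\varphi^2=0$: indeed, if $a\in S$ is a nonzero nilpotent and $n\geq 2$ is least with $a^n=0$, the element $\varphi=a^{n-1}$ is nonzero and $\varphi^2=a^{2n-2}=0$. Now $\varphi^2=0$ gives $\varphi(M)\subseteq\Ker\varphi$, that is, $\Img\varphi\subseteq\Ker\varphi$, and the hypothesis $M=\Img\varphi+\Ker\varphi$ then forces $M=\Ker\varphi$, so $\varphi=0$, a contradiction. Hence $S$ is reduced, and therefore $S$ is an abelian regular ring by \cite[Theorem 3.2]{goodearlneumann}; that is, $M$ is abelian endoregular.

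I do not expect a genuine obstacle here: the only points that call for a word of care are the passage from an arbitrary nonzero nilpotent element to a square-zero one, and the invocation that a von Neumann regular ring is abelian precisely when it is reduced, which is exactly the content of the cited result. If one prefers to bypass that characterization, one can instead check directly, for each $\varphi\in S$, that $M=\Ker\varphi\oplus\Img\varphi$ and apply Proposition \ref{phikerima}: $M$ is endoregular (being unit endoregular), so $\Ker\varphi$ and $\Img\varphi$ are direct summands of $M$ and $D:=\Ker\varphi\cap\Img\varphi$ is a direct summand of $M$ by Lemma \ref{endosip}; writing $\Img\varphi=D\oplus E$ one has $E\cong\Img\varphi/D\cong M/\Ker\varphi\cong\Img\varphi=D\oplus E$, and since unit regularity makes the direct summand $E$ of $M$ directly finite, $D=0$. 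The nilpotent argument is the shorter one, so that is the route I would write up.
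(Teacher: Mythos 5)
Your proof is correct, but it follows a genuinely different route from the paper's. The paper's proof is a two-line appeal to \cite[Theorem 16]{leeunit}, which for a unit endoregular module gives $\Img\varphi\cap\Ker\varphi\cong M/(\Img\varphi+\Ker\varphi)$ for every $\varphi\in\End_R(M)$; the hypothesis kills the right-hand side, so $M=\Ker\varphi\oplus\Img\varphi$ and Proposition \ref{phikerima} concludes. You instead work at the level of the endomorphism ring: the hypothesis shows any square-zero $\varphi$ satisfies $M=\Img\varphi+\Ker\varphi=\Ker\varphi$, hence $\varphi=0$, so $S=\End_R(M)$ is a regular reduced ring and therefore abelian regular by \cite[Theorem 3.2]{goodearlneumann} (the same equivalence the paper itself uses elsewhere). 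Two remarks on the comparison. First, your main argument never uses unit regularity, only regularity of $S$, so it in fact proves the slightly stronger statement that an \emph{endoregular} module with $M=\Img\varphi+\Ker\varphi$ for all $\varphi$ is abelian endoregular; the paper's argument, by contrast, leans on the unit-regular (internal cancellation) machinery but has the virtue of producing the decomposition $M=\Ker\varphi\oplus\Img\varphi$ explicitly, which is the form Proposition \ref{phikerima} asks for. Second, your alternative sketch (take $D=\Ker\varphi\cap\Img\varphi\dleq M$ via Lemma \ref{endosip}, write $\Img\varphi=D\oplus E$, get $E\cong\Img\varphi/D\cong M/\Ker\varphi\cong\Img\varphi=D\oplus E$, and use direct finiteness of the summand $E$ to force $D=0$) is essentially an unwinding of the cited Theorem 16, so it is closer in spirit to the paper; there the only point to make explicit is that direct summands of a unit endoregular module have unit regular, hence directly finite, endomorphism rings. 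Either write-up is valid; the nilpotent argument is the shortest self-contained one.
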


\begin{proof}
By \cite[Theorem 16]{leeunit} and hypothesis for any $\varphi \in \End_R(M)$,
 $$\Img\varphi \cap \Ker\varphi \cong M/(\Img\varphi+\Ker\varphi) \cong 0.$$
Hence $M$ is an abelian endoregular module by Proposition \ref{phikerima}.
\end{proof}

\begin{prop}
Let $M$ be a unit endoregular module. If any idempotent in $\End_R(M)$ commutes with all isomorphisms of $M$, then $M$ is an abelian endoregular module.
\end{prop}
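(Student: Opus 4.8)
The plan is to show that every idempotent $e \in S := \End_R(M)$ is central, which by Proposition \ref{phikerima} (together with the fact that $M$ is already unit endoregular, hence endoregular) will give that $M$ is abelian endoregular. The strategy is to leverage the hypothesis — that idempotents commute with all \emph{isomorphisms} of $M$ — by manufacturing, for a given idempotent $e$ and arbitrary $\varphi \in S$, a suitable \emph{unit} (i.e. automorphism) of $M$ out of $\varphi$ and $e$, using the unit-regularity of $S$.

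First I would fix an idempotent $e \in S$ and recall that, since $S$ is unit regular, for any $\varphi \in S$ there is a unit $u \in S$ with $\varphi u \varphi = \varphi$; more to the point, standard unit-regular ring theory (as in \cite[Ch.~4]{goodearlneumann}) gives that every element of $S$ is a sum of two units when $S$ has no $\mathbb{Z}/2$ factor, and in general that the principal right ideals $eS$ and $(1-e)S$-type complements are exchanged by units. The key algebraic step I would carry out is: given $e^2 = e$, produce an automorphism $\alpha$ of $M$ with $\alpha e \alpha^{-1} = 1 - e$ whenever $eM \cong (1-e)M$, and otherwise argue directly. So the real work splits into handling the relation $(1-e)Se = 0$, which by \cite[Lemma 3.1]{goodearlneumann} is equivalent to centrality of $e$. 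Concretely, for $\varphi \in S$ I want to show $(1-e)\varphi e = 0$; I would consider the endomorphism $1 - e\varphi e + \ldots$-type perturbation, or better, use that $e + (1-e)\varphi e$ is an idempotent-adjacent element and that $1 + (1-e)\varphi e$ is a unit (being $1$ plus a square-zero element, since $\big((1-e)\varphi e\big)^2 = 0$). Then commuting this unit with the idempotent $e$ via the hypothesis yields $(1-e)\varphi e \cdot e = e \cdot (1-e)\varphi e$, i.e. $(1-e)\varphi e = 0$, as desired.

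In more detail, the cleanest route: for arbitrary $\varphi \in S$, set $n = (1-e)\varphi e$, so $n^2 = 0$ and hence $u := 1 + n$ is a unit in $S$ with $u^{-1} = 1 - n$; thus $u$ is an automorphism of $M$. By hypothesis $u$ commutes with the idempotent $e$, so $eu = ue$, giving $en = ne$. But $en = e(1-e)\varphi e = 0$ while $ne = (1-e)\varphi e \cdot e = (1-e)\varphi e = n$. Therefore $n = 0$, i.e. $(1-e)\varphi e = 0$ for all $\varphi \in S$, which is precisely the condition $(1-e)Se = 0$. Symmetrically (or by applying the same argument to $1-e$, which is also an idempotent) one gets $eS(1-e) = 0$, and \cite[Lemma 3.1]{goodearlneumann} then forces $e$ to be central. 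Since $e$ was an arbitrary idempotent, $S$ is abelian; being also von Neumann regular (unit regular $\Rightarrow$ regular), $S$ is an abelian regular ring, so $M$ is abelian endoregular.

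I expect the only genuine subtlety to be making sure the hypothesis is being applied to an honest automorphism: one must check that $1 + n$ with $n^2 = 0$ is invertible in $S$ (immediate: $(1+n)(1-n) = 1 - n^2 = 1$) and that invertibility in $S = \End_R(M)$ is the same as being an isomorphism of $M$, which is standard. The unit-regularity hypothesis is then almost a red herring for this particular argument — one only uses that $1+n$ is a unit — but it is presumably retained to keep the statement parallel to the preceding proposition and to guarantee $M$ is endoregular so that Proposition \ref{phikerima} applies. So the main obstacle is essentially bookkeeping: verifying $n^2 = 0$ from $e^2 = e$ and confirming that the hypothesis as stated ("commutes with all isomorphisms of $M$") covers $u = 1 + n \in \Aut_R(M)$.
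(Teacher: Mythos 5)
Your proof is correct, but it takes a genuinely different route from the paper. The paper proves the statement module-theoretically: given isomorphic $M$-generated submodules, it produces isomorphic finitely $M$-generated direct summands $A=eM$ and $B=fM$, invokes the internal cancellation property of unit endoregular modules (from \cite{leeunit}) to obtain an isomorphism $u$ with $u^{-1}eu=f$, uses the hypothesis to force $e=f$, concludes that isomorphic $M$-generated submodules coincide, and then applies Proposition \ref{propinc} (b)$\Rightarrow$(a). Your argument instead works entirely inside $S=\End_R(M)$: for an idempotent $e$ and arbitrary $\varphi\in S$ you set $n=(1-e)\varphi e$, note $n^2=0$ so that $u=1+n$ is an automorphism of $M$, and the hypothesis $eu=ue$ gives $en=ne$, whence $n=ne=en=0$; applying this to $e$ and to $1-e$ yields $(1-e)Se=0=eS(1-e)$, so $e$ is central (here you do not even need \cite[Lemma 3.1]{goodearlneumann}, since $e\varphi=e\varphi e=\varphi e$ follows directly from the two vanishing conditions). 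Your route is more elementary and in fact more general: it shows that in any ring, idempotents commuting with all units are central, so the unit-regularity hypothesis is used only to guarantee that $S$ is von Neumann regular, whereas the paper's proof genuinely exploits unit regularity through internal cancellation and ties the result to the characterization in Proposition \ref{propinc}. The speculative material in your middle paragraph (sums of two units, exchanging complements) is unnecessary and can be deleted; the ``cleanest route'' paragraph is a complete proof on its own.
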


\begin{proof}
Let $N$ and $K$ be two nonzero $M$-generated submodules of $M$ and let $\theta:N\to K$ be an isomorphism. Since $N$ is $M$-generated, $\Hom_R(M,N)\neq 0$. Let $0\neq\varphi:M\to N$. Set $A=\varphi(M)\leq N$ and $B=\theta(A)\leq K$. Hence, $A\cong B$. Since $A$ is finitely $M$-generated, $A\dleq M$. Therefore, there exists an idempotent $e\in \End_R(M)$ such that $A=eM$. Analogously there exists an idempotent $f\in\End_R(M)$ such that $B=fM$. Since any unit endoregular module satisfy the internal cancellation property, then $u^{-1}eu=f$ for some isomorphism $u \in \End_R(M)$ (See, \cite[Remark 12]{leeunit}). By assumption,
$$e-f=e-u^{-1}eu=u^{-1}ue-u^{-1}eu=u^{-1}(ue-eu)=0.$$
Thus, $e=f$. This implies $\varphi(M)=A=eM=fM=B\leq K$ and so $N=\Hom_R(M,N)M\leq K$. Analogously, $K\leq N$. Thus $N=K$. By Proposition \ref{propinc}, $M$ is an abelian endoregular module.
\end{proof}

\begin{rem}
If $M$ is an abelian endoregular module, then $M$ has SIP and SSP (Lemma \ref{endosip}). That is the set of direct summands (finitely $M$-generated submodules) of $M$ is a lattice. Moreover this lattice is complemented and distributive by Proposition \ref{propinc}, in other words it is a Boolean algebra.
\end{rem}


%

\begin{lemma}\label{fiinsub}
Let $M$ be a module and $L$ be a fully invariant direct summand of $M$. If $L\leq N\leq M$ then $L$ is fully invariant in $N$.
\end{lemma}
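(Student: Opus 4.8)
The statement asserts that if $L$ is a fully invariant direct summand of $M$ and $L \leq N \leq M$, then $L$ is fully invariant in $N$. The plan is to use the direct summand structure to extend any endomorphism of $N$ to an endomorphism of $M$, so that full invariance in $M$ can be transported down to $N$. Write $M = L \oplus K$ for some submodule $K \leq M$, and let $\pi_L \colon M \to L$ and $\pi_K \colon M \to K$ denote the associated projections. Let $g \in \End_R(N)$ be arbitrary; the goal is to show $g(L) \subseteq L$.

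The key step is to build from $g$ a suitable endomorphism of $M$. Since $L \leq N$, the composite $g|_L \colon L \to N \hookrightarrow M$ is a homomorphism from $L$ to $M$; composing with the projection $\pi_L$ onto the first summand is not quite what we want, so instead I would define $\varphi \colon M \to M$ by $\varphi = (\iota_N \circ g \circ \pi_L')\oplus 0$ on the decomposition $M = L \oplus K$, where $\pi_L' \colon M \to L$ is the projection and $\iota_N \colon N \to M$ is the inclusion. Concretely, $\varphi(\ell + k) = g(\ell)$ for $\ell \in L$, $k \in K$, which is well-defined because $g(\ell) \in N \subseteq M$. This $\varphi$ is an $R$-linear endomorphism of $M$. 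Since $L$ is fully invariant in $M$, we get $\varphi(L) \subseteq L$; but $\varphi(L) = g(L)$, so $g(L) \subseteq L$, which is exactly what we need. As $g$ was arbitrary, $L$ is fully invariant in $N$.

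The only point requiring a little care — and the main (modest) obstacle — is checking that the map $\varphi$ just defined is genuinely a well-defined $R$-homomorphism $M \to M$: one must verify that $g(\ell)$ indeed lands in $M$ (immediate, since $N \leq M$) and that $\varphi$ is additive and $R$-linear, which follows because it is the composite $M \xrightarrow{\pi_L'} L \xrightarrow{g|_L} N \xrightarrow{\iota_N} M$ of $R$-homomorphisms. Once $\varphi \in \End_R(M)$ is in hand, full invariance of $L$ in $M$ does all the remaining work, and no hypothesis beyond "$L$ is a direct summand" (to supply the projection $\pi_L'$) and "$L$ is fully invariant in $M$" is used. Note the containment $L \leq N$ is essential precisely so that $g$ can be applied to elements of $L$ and the images still make sense inside $N$, hence inside $M$.
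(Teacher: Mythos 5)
Your proof is correct and follows essentially the same route as the paper: the paper also forms the endomorphism $\iota\,\alpha\, e\in\End_R(M)$ (your $\iota_N\circ g|_L\circ\pi_L'$), where $e$ is the idempotent projecting onto $L$, and then invokes full invariance of $L$ in $M$ (phrased there via $e$ being left semicentral) to conclude $\alpha(L)\subseteq L$. No gaps; your direct application of full invariance to $\varphi$ replaces the paper's short idempotent computation, but the argument is the same.
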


\begin{proof}
Let $N$ be a submodule of $M$ containing $L$ and $\alpha:N\to N$ be any endomorphism of $N$. Since $L$ is a direct summand of $M$ there exists a left semicentral idempotent $e\in\End_R(M)$ such that $eM=L$. Since $L\subseteq N$ we can consider $e:M\to N$. Let $\iota:N\to M$ denote the canonical inclusion. Hence $\iota\alpha e:M\to M$. Since $e$ is a left semicentral idempotent, $\iota\alpha e=(\iota\alpha e)e=e(\iota\alpha e)e$. Thus
\[\alpha(L)  =\iota\alpha(L)
 =\iota\alpha e(M)
 =e(\iota\alpha e)e(M)
 \subseteq e(M)
 = L.\]
Therefore, $L$ is fully invariant in $N$.
\end{proof}


\begin{thm}\label{limit}
Let $M$ be an abelian endoregular module and $N$ be an $M$-generated submodule of $M$. Then $N$ is an abelian endoregular module.
\end{thm}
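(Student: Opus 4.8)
The goal is to show that if $M$ is abelian endoregular and $N \le M$ is $M$-generated, then $\End_R(N)$ is abelian von Neumann regular. I would first handle the case where $N$ is \emph{finitely} $M$-generated: by Proposition~\ref{finmsumand}, such an $N$ is a direct summand of $M$, and then Corollary~\ref{dirsummand} immediately gives that $N$ is abelian endoregular. So the real content is the passage from finitely $M$-generated to arbitrary $M$-generated submodules, i.e. a direct-limit type argument: an arbitrary $M$-generated $N$ is the directed union $N = \bigcup_{i} N_i$ of its finitely $M$-generated submodules $N_i$ (the images $\varphi(M^{(n)})$ of the various finite restrictions of a fixed epimorphism $M^{(\mathcal I)} \to N$), and each $N_i$ is a direct summand of $M$, hence fully invariant in $M$ by the Corollary after Proposition~\ref{subfi}.

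\textbf{Step 1: reduce to checking the characterization in Proposition~\ref{subfi}.} Rather than build $\End_R(N)$ directly, I would verify that $N$ is endoregular and that every $N$-generated submodule of $N$ is fully invariant in $N$; then Proposition~\ref{subfi} applied to $N$ finishes the proof. For endoregularity of $N$: given $\alpha \in \End_R(N)$, I want $\Ker\alpha$ and $\Img\alpha$ to be direct summands of $N$. Pick a finitely $M$-generated $N_i$ with $\Img\alpha \subseteq N_i$ and also large enough that $\alpha$ restricted to... — here is the subtlety — $\alpha$ need not preserve the $N_i$. I would instead argue: since each $N_i$ is fully invariant in $M$ and $N_i \le N \le M$, by Lemma~\ref{fiinsub} $N_i$ is fully invariant in $N$, so $\alpha(N_i) \subseteq N_i$; thus $\alpha$ restricts to each $N_i$, and $\Img\alpha = \bigcup_i \alpha(N_i)$, $\Ker\alpha = \bigcup_i \Ker(\alpha|_{N_i})$. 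Applying Proposition~\ref{kerdirecsum}(2) to the map $\alpha|_{N_i}: N_i \to N$ (valid since $N_i \dleq M$) gives $\Ker(\alpha|_{N_i}) \dleq N_i$ and $\alpha(N_i) \dleq N$, hence $\alpha(N_i)$ is a direct summand of $M$ too, hence fully invariant in $M$ and in $N$.

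\textbf{Step 2: promote the directed-union summands to genuine summands of $N$.} This is where I expect the main obstacle. Having $\Img\alpha$ as an increasing union of fully invariant direct summands $\alpha(N_i)$ of $N$ does not a priori make the union a direct summand. The fix I anticipate: show $\Img\alpha$ is itself \emph{finitely} $M$-generated, or stabilizes. Indeed $\alpha(N) = \alpha(\sum \text{images of generators})$; but $N$ is $M$-generated, not finitely, so $\Img\alpha$ could fail to be finitely generated. However, I can use the distributivity/Boolean structure: the fully invariant direct summands of $M$ that lie in $N$ form a sublattice (using SSP/SIP from Lemma~\ref{endosip} and the Boolean structure from Proposition~\ref{propinc}); the $\alpha(N_i)$ are an increasing chain in this Boolean algebra. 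Alternatively, and more cleanly, I would mimic Goodearl's proof that a directed union of abelian regular rings is abelian regular, transported through $\End_R(N) = \varinjlim \End_R(N_i)$ — but the functoriality of $\End$ under this union is itself delicate. The safest route: prove directly that $\Ker\alpha$ and $\Img\alpha$ \emph{are} direct summands by exhibiting an idempotent; since $\alpha|_{N_i}$ has $\Ker(\alpha|_{N_i}) \dleq N_i$ and $N_i = \Ker(\alpha|_{N_i}) \oplus \alpha(N_i)$ is a \emph{direct sum decomposition of each} $N_i$ compatibly (because abelian endoregularity of $N_i$ via Proposition~\ref{phikerima} forces $N_i = \Ker(\alpha|_{N_i}) \oplus \Img(\alpha|_{N_i})$), the decompositions are coherent across the directed system, so they glue to $N = \Ker\alpha \oplus \Img\alpha$. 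Then $N$ is abelian endoregular by Proposition~\ref{phikerima} again.

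\textbf{Step 3: the fully-invariance condition for $N$-generated submodules.} Let $P \le N$ be $N$-generated. Since $N$ is $M$-generated and $P$ is $N$-generated, $P$ is $M$-generated; by Step~2's reasoning applied with $M$, every finitely $M$-generated piece of $P$ is a fully invariant summand of $M$, and $P = \Hom_R(M,P)M$ is fully invariant in $M$, hence in $N$ by Lemma~\ref{fiinsub}. Alternatively, once $\End_R(N)$ is known to be von Neumann regular (Step~1–2), the argument of Proposition~\ref{subfi}(a)$\Rightarrow$(b) shows: if additionally $\End_R(N)$ is abelian then $N$-generated submodules are fully invariant — but that is circular, so I would keep the $M$-level argument. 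The main obstacle, to reiterate, is Step~2: ensuring the summand decompositions of the finitely $M$-generated approximants are \emph{coherent} (nested) so that passing to the union yields an honest direct summand of $N$ and an idempotent in $\End_R(N)$; coherence should follow from uniqueness of the decomposition $N_i = \Ker \oplus \Img$ forced by the no-nilpotents property of abelian regular endomorphism rings, plus Lemma~\ref{fiinsub}.
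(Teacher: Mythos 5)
Your proposal is correct in substance, but it reaches the conclusion by a genuinely different route than the paper, and the comparison is worth recording. Both arguments begin identically: write $N$ as the directed union of its finitely $M$-generated submodules $N_i$, each of which is a direct summand of $M$ (Proposition \ref{finmsumand}), hence abelian endoregular (Corollary \ref{dirsummand}) and fully invariant in $M$, and therefore fully invariant in $N$ by Lemma \ref{fiinsub}. From there the paper works at the level of rings: it shows the restriction maps make the $\End_R(N_i)$ into an inverse system, proves $\End_R(N)\cong\varprojlim \End_R(N_i)$ by verifying the universal property, and then invokes \cite[Proposition 3.6]{goodearlneumann} to conclude that this limit of abelian regular rings is abelian regular. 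You instead stay at the level of the module: each $N_i$ is $\alpha$-invariant for every $\alpha\in\End_R(N)$ and abelian endoregular, so Proposition \ref{phikerima} gives $N_i=\Ker(\alpha|_{N_i})\oplus\Img(\alpha|_{N_i})$, and these decompositions pass to the directed union to yield $N=\Ker\alpha\oplus\Img\alpha$, whence $N$ is abelian endoregular by Proposition \ref{phikerima} again. This is more elementary and self-contained; what the paper's formulation buys in exchange is the explicit isomorphism $\End_R(N)\cong\varprojlim\End_R(f(M))$, which has independent interest. Note also that the ``coherence'' of decompositions you flag as the main obstacle in Step~2 is not actually needed: directedness alone suffices, since $N=\Ker\alpha+\Img\alpha$ because every $n\in N$ lies in some $N_i$, and if $x=\alpha(n)$ with $\alpha(x)=0$, then choosing $N_i\ni n$ gives $x\in\Ker(\alpha|_{N_i})\cap\Img(\alpha|_{N_i})=0$. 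Finally, once you argue via Proposition \ref{phikerima}, the reduction to Proposition \ref{subfi} and all of Step~3 are redundant; and as written Step~3 misapplies Lemma \ref{fiinsub} to $P$, which need not be a direct summand of $M$ --- if you wanted that route you should instead write $P$ as a sum of finitely $M$-generated pieces, each a fully invariant direct summand of $M$ contained in $N$ and hence fully invariant in $N$, and use that a sum of fully invariant submodules is fully invariant. These are cosmetic repairs; the core argument of Step~2 is sound.
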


\begin{proof}
Let $I=\Hom_R(M,N)$. Then $N=\sum\{f(M)\mid f\in I\}$. Since $M$ is an endoregular module, $f(M)\dleq M$ for every $f\in I$, moreover $f(M)$ is fully invariant in $N$ by Lemma \ref{fiinsub}. Let $f,g\in I$. Since $M$ is endoregular, there exists an idempotent $e\in S$ such that $eM=f(M)+g(M)$ (Lemma \ref{endosip}). Note that $f(M)+g(M)\leq N$, hence $e\in I$. Therefore, $\mathcal{J}:=\{f(M)\mid f\in I\}$ is a directed poset. We have that $f(M)$ is fully invariant direct summand of $M$, so if $f(M)\leq g(M)$ then $f(M)$ is fully invariant in $g(M)$ by Lemma \ref{fiinsub}. Hence, there exists a ring homomorphism $\End_R(g(M))\to \End_R(f(M))$ given by restriction. We claim that
\[\varprojlim_{f(M)\in \mathcal{J}}\End_R(f(M))\cong \End_R(N).\]
Let $A$ be a ring with ring homomorphisms $\{\pi_f:A\to \End_R(f(M))\mid f(M)\in \mathcal{J}\}$. Consider the following diagram:
\[\xymatrix{ & A\ar[dddl]_{\pi_g}\ar[dddr]^{\pi_f}\ar@{--{>}}[dd]^\lambda & \\ & & \\ & \End_R(N)\ar[dl]^{\mid_g}\ar[dr]_{\mid_f} & \\ \End(g(M))\ar[rr]_{\mid} & & \End_R(f(M))}\]
Suppose that $\mid\circ\pi_g=\pi_f$ whenever $f(M)\leq g(M)$. Let $a\in A$ and $\sum_{i=1}^\ell f_i(m_i)\in N=IM$. Define $\lambda(a):N\to N$ as
\[\lambda(a)(n)=\lambda(a)\left(\sum_{i=1}^\ell f_i(m_i)\right)=\sum_{i=1}^\ell \pi_{f_i}(a)\left(f_i(m_i)\right)\in N.\]
Suppose $\sum_{i=1}^\ell f_i(m_i)=0$. Since $\mathcal{J}$ is directed, there exists $g\in I$ such that $f_i(M)\subseteq g(M)$ for all $1\leq i\leq \ell$ and so $0=\sum_{i=1}^\ell f_i(m_i)=g(m)$ for some $m\in M$. Then
\[0  =\pi_g(a)(g(m))
 =\pi_g(a)\left(\sum_{i=1}^\ell f_i(m_i)\right)
 =\sum_{i=1}^\ell \pi_g(a)(f_i(m_i))
 =\sum_{i=1}^\ell \mid\circ\pi_g(a)(f_i(m_i))\]
\[ =\sum_{i=1}^\ell \pi_{f_i}(a)(f_i(m_i))
 =\lambda(a)\left(\sum_{i=1}^\ell f_i(m_i)\right).\]
Then, $\lambda(a)$ is well defined for all $a\in A$. It is clear that $\lambda$ is a homomorphism of abelian groups. Let $a,b\in A$ and $n=\sum_{i=1}^\ell f_i(m_i)\in N$ be arbitrary.  Since $\mathcal{J}$ is directed, there exists $g\in I$ such that $f_i(M)\subseteq g(M)$ for all $1\leq i\leq \ell$ and so $n=\sum_{i=1}^\ell f_i(m_i)=g(m)$ for some $m\in M$. Then
\[\lambda(a)\lambda(b)(n) =\lambda(a)\lambda(b)(g(m))
 =\lambda(a)\left(\pi_g(b)(g(m))\right)
 =\pi_g(a)\pi_g(b)(g(m))
 =\pi_g(ab)(g(m))\]
\[ =\lambda(ab)(g(m))
 =\lambda(ab)(n).\]

Thus, $\lambda$ is a ring homomorphism and it is clear that $\mid_g\circ\lambda=\pi_g$. Now, if there exists another ring homomorphism $\eta:A\to \End_R(N)$ such that $\mid_g\circ\eta=\pi_g$ then
\[\lambda(a)(g(m))=\pi_g(a)(g(m))=\mid_g\circ\eta(a)(g(m))=\eta(a)(g(m)).\]
for all $a\in A$ and all $g(m)\in N$. Thus $\lambda$ is unique, proving our claim.

By Corollary \ref{dirsummand}, $\End_R(f(M))$ is an abelian regular ring. Thus $\End_R(N)$ is an abelian regular ring by \cite[Proposition 3.6]{goodearlneumann}. Hence, $N$ is an abelian endoregular module.
\end{proof}

We have that every direct summand of an abelian endoregular module is an abelian endoregular module (Corollary \ref{dirsummand}), but in general the direct sum of two abelian endoregular modules is not. As an example we can consider the endoregular $\mathbb{Z}$-module, $M=\mathbb{Q}\oplus\mathbb{Q}$ (see \cite[Corollary 3.15]{leemodules}). Note that $\mathbb{Q}\oplus 0$ is $M$-generated but is not fully invariant in $M$. Hence $M$ is not abelian endoregular (Proposition \ref{subfi}). Next proposition characterize the direct sums of abelian endoregular modules.

\begin{prop}\label{sums}
Let $\{M_i\}_{i\in I}$ be a family of modules. Then, $\bigoplus_{i\in I}M_i$ is abelian endoregular if and only if $M_i\leq_{fi}\bigoplus_{i\in I}M_i$ and $M_i$ is an abelian endoregular module for all $i\in I$.
\end{prop}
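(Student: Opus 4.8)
The plan is to prove both implications using the characterization of abelian endoregular modules via fully invariant $M$-generated submodules (Proposition \ref{subfi}), together with the fact that abelian endoregular modules are endoregular, and some bookkeeping about homomorphisms between direct summands.

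For the ``only if'' direction, suppose $M=\bigoplus_{i\in I}M_i$ is abelian endoregular. Each $M_i$ is a direct summand of $M$, hence abelian endoregular by Corollary \ref{dirsummand}; and each direct summand of an abelian endoregular module is fully invariant (the corollary following Proposition \ref{subfi}), so $M_i\leq_{fi}M$. That settles this direction immediately.

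For the ``if'' direction, assume each $M_i\leq_{fi}M$ and each $M_i$ is abelian endoregular. First I would show $M$ is endoregular: since the $M_i$ are fully invariant, $\End_R(M)\cong\prod_{i\in I}\End_R(M_i)$ (every endomorphism of $M$ preserves each $M_i$, so it is determined by its restrictions, and there are no cross maps because $\Hom_R(M_i,M_j)=0$ for $i\neq j$—this last point needs the fully invariant hypothesis, since $M_j$ being fully invariant forces the image of any map $M_i\to M$ landing in $M_j$ to also land in $M_i$, hence be zero). A product of (abelian) regular rings is (abelian) regular by \cite[Theorem 3.2, Proposition 3.6]{goodearlneumann}, so $\End_R(M)$ is abelian regular and $M$ is abelian endoregular directly. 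Alternatively, to stay closer to the module language, one can verify condition (b) of Proposition \ref{subfi}: $M$ is endoregular because each $\varphi\in\End_R(M)$ decomposes as $(\varphi_i)_{i\in I}$ with $\Ker\varphi=\bigoplus\Ker\varphi_i$ and $\Img\varphi=\bigoplus\Img\varphi_i$, each a direct summand of $M_i$ (since $M_i$ is abelian endoregular, hence endoregular), and direct sums of direct summands are direct summands; then one checks every $M$-generated submodule $N$ of $M$ is fully invariant, using that $N=\bigoplus_{i\in I}(N\cap M_i)$ with each $N\cap M_i$ an $M_i$-generated—hence fully invariant in $M_i$—submodule, and full invariance in $M$ follows from full invariance of each $M_i$ in $M$ together with the vanishing of cross maps.

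The main obstacle is establishing that $\Hom_R(M_i,M_j)=0$ for $i\neq j$ and, relatedly, that an $M$-generated submodule $N$ splits as $\bigoplus_i(N\cap M_i)$ with the summands $M_i$-generated; both rest on carefully exploiting the full invariance of the $M_i$ inside the infinite direct sum. The ring-isomorphism route via $\End_R(M)\cong\prod_i\End_R(M_i)$ is the cleanest, so I would organize the argument around that identification and then quote \cite[Theorem 3.2 and Proposition 3.6]{goodearlneumann}.
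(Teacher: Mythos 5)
Your proposal is correct, and your main argument for the ``if'' direction takes a different route from the paper's. Both hinge on the same key observation—full invariance of the summands kills all cross maps, so every endomorphism of $\bigoplus_{i\in I}M_i$ acts diagonally—but the paper then stays in module language: it quotes \cite[Proposition 3.20]{leemodules} for endoregularity of the direct sum and then verifies the criterion of Proposition \ref{phikerima} directly, writing $\Ker\varphi=\bigoplus_i\Ker\varphi_{ii}$ and $\Img\varphi=\bigoplus_i\Img\varphi_{ii}$ and using $M_i=\Ker\varphi_{ii}\oplus\Img\varphi_{ii}$ to conclude $M=\Ker\varphi\oplus\Img\varphi$ (which in fact makes the cited endoregularity result redundant). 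You instead establish the ring isomorphism $\End_R\bigl(\bigoplus_{i\in I}M_i\bigr)\cong\prod_{i\in I}\End_R(M_i)$ and invoke closure of abelian regular rings under direct products; this is equally valid (the componentwise map is surjective even for infinite $I$ because elements of the direct sum have finite support, and regularity and centrality of idempotents in a product are checked coordinatewise, whatever the exact citation from \cite{goodearlneumann}), and it has the advantage of not needing the external direct-sum result from \cite{leemodules}, at the cost of bypassing the module-theoretic criteria \ref{phikerima} and \ref{subfi} that the paper prefers. Your secondary sketch via Proposition \ref{subfi}(b) also goes through: $N=\bigoplus_i(N\cap M_i)$ for an $M$-generated $N$ because each $f\in\Hom_R(M,N)$ satisfies $f(M_j)\subseteq N\cap M_j$, and each $N\cap M_j=\sum_f f(M_j)$ is $M_j$-generated, hence fully invariant in $M_j$. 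One small slip: in justifying $\Hom_R(M_i,M_j)=0$ you appeal to full invariance of the target $M_j$; the relevant summand is the source, since extending $f:M_i\to M_j$ by zero to an endomorphism of $M$ and applying full invariance of $M_i$ gives $f(M_i)\subseteq M_i\cap M_j=0$. This is harmless here, as all summands are assumed fully invariant, but it should be stated accurately. The ``only if'' direction is exactly the paper's argument.
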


\begin{proof}
Suppose $\bigoplus_{i\in I}M_i$ is an abelian endoregular module. Hence, $M_i$ is an abelian endoregular module for all $i\in I$ by Corollary \ref{dirsummand}, and $M_i$ is fully invariant in $\bigoplus_{i\in I}M_i$ by Proposition \ref{subfi}. Conversely, since each $M_i$ is an endoregular module then $\bigoplus_{i\in I}M_i$ is endoregular by \cite[Proposition 3.20]{leemodules}. Let $\varphi=(\varphi_{ij}):\bigoplus_{i\in I}M_i\to \bigoplus_{i\in I}M_i$ be an endomorphism where $\varphi_{ij}:M_j\to M_i$. Since $M_i\leq_{fi}\bigoplus_{i\in I}M_i$ for all $i\in I$, $\Ker\varphi=\bigoplus_{i\in I}\Ker\varphi_{ii}$ and $\Img\varphi=\bigoplus_{i\in I}\Img\varphi_{ii}$. By Proposition \ref{phikerima} $M_i=\Ker\varphi_{ii}\oplus\Img\varphi_{ii}$. It follows that
\[\bigoplus_{i\in I}M_i =\bigoplus_{i\in I}\left(\Ker\varphi_{ii}\oplus\Img\varphi_{ii}\right)
 =\bigoplus_{i\in I}\Ker\varphi_{ii}\oplus\bigoplus_{i\in I}\Img\varphi_{ii}
 =\Ker\varphi\oplus\Img\varphi.\]
Thus, $\bigoplus_{i\in I}M_i$ is an abelian endoregular module by Proposition \ref{phikerima}.
\end{proof}

\begin{cor}
Let $M=\bigoplus_{i\in I}Q_i$ be a direct sum of nonisomorphic indecomposable nonsingular injective modules. Then $M$ is abelian endoregular.
\end{cor}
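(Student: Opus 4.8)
The plan is to reduce the statement to Proposition~\ref{sums}: it suffices to show that each $Q_i$ is an abelian endoregular module and that each $Q_i$ is fully invariant in $M$. For the full invariance, I would observe that if $\varphi=(\varphi_{ji})\in\End_R(M)$ with $\varphi_{ji}\colon Q_i\to Q_j$, then $\varphi(Q_i)\subseteq Q_i$ precisely when $\varphi_{ji}=0$ for every $j\neq i$; hence $Q_i\leq_{fi}M$ for all $i$ will follow once we know $\Hom_R(Q_i,Q_j)=0$ whenever $i\neq j$.

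Both requirements come from a single observation, which is the heart of the argument: \emph{if $Q$ and $Q'$ are indecomposable nonsingular injective modules, then every nonzero homomorphism $f\colon Q\to Q'$ is an isomorphism.} To prove it, first use that an indecomposable injective module is uniform, so every nonzero submodule of $Q$ is essential. If $\Ker f\neq 0$ then $\Ker f\ess Q$, so $Q/\Ker f\cong\Img f$ is singular; but $\Img f\leq Q'$ and $Q'$ is nonsingular, hence so is $\Img f$, and a module that is both singular and nonsingular is zero, contradicting $f\neq 0$. Thus $f$ is a monomorphism, so $\Img f\cong Q$ is injective and therefore a direct summand of $Q'$; indecomposability of $Q'$ together with $\Img f\neq 0$ forces $\Img f=Q'$, i.e. $f$ is an isomorphism.

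Granting this, I would finish as follows. Taking $Q=Q'=Q_i$ shows that every nonzero element of $\End_R(Q_i)$ is an automorphism, so $\End_R(Q_i)$ is a division ring and in particular an abelian von Neumann regular ring; hence $Q_i$ is abelian endoregular. Taking $Q=Q_i$ and $Q'=Q_j$ with $i\neq j$, and using that the $Q_i$ are pairwise nonisomorphic so that no isomorphism $Q_i\to Q_j$ can exist, gives $\Hom_R(Q_i,Q_j)=0$; by the remark above, each $Q_i$ is then fully invariant in $M$. Proposition~\ref{sums} now yields that $M$ is abelian endoregular.

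I do not expect a genuine obstacle here: the only points needing care are the standard homological facts invoked in the key observation — that an indecomposable injective module is uniform, that the quotient of a module by an essential submodule is singular, and that submodules of nonsingular modules are nonsingular — all of which are routine and could simply be cited.
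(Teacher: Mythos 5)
Your proposal is correct and follows essentially the same route as the paper: show every nonzero map $Q_i\to Q_j$ is an isomorphism (uniformity plus nonsingularity give injectivity of the map, injectivity of $Q_i$ plus indecomposability of $Q_j$ give surjectivity), deduce that $\End_R(Q_i)$ is a division ring and $\Hom_R(Q_i,Q_j)=0$ for $i\neq j$, and conclude via Proposition~\ref{sums}. Your write-up merely spells out the standard facts (uniform, singular quotient, nonsingular submodule) that the paper cites implicitly.
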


\begin{proof}
Let $\varphi:Q_i\to Q_j$ be a nonzero homomorphism. Since $Q_i$ is uniform and $Q_j$ is nonsingular, $\varphi$ is a monomorphism. Since $Q_i$ is injective and $Q_j$ is indecomposable, $\varphi$ is an isomorphism. Then $\Hom_R(Q_i,Q_j)=0$ if $i\neq j$ and $\End_R(Q_i)$ is a division ring for every $i\in I$. By  Proposition \ref{sums}, $M$ is abelian endoregular.
\end{proof}

\begin{cor}
Let $M$ be a torsion-free module over a commutative domain $R$. Then $M$ is abelian endoregular if and only if $M$ is $R$-isomorphic to the field of fractions of $R$.
\end{cor}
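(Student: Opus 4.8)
The plan is to write $K$ for the field of fractions of $R$ and to treat the two implications separately.

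\emph{Sufficiency.} First I would record that $\End_R(K)\cong K$: for $\varphi\in\End_R(K)$ and $a/b\in K$ with $a\in R$, $0\neq b\in R$, one has $b\,\varphi(a/b)=\varphi(a)=a\,\varphi(1)$, so $\varphi$ is multiplication by $\varphi(1)\in K$, and the assignment $\varphi\mapsto\varphi(1)$ is a ring isomorphism $\End_R(K)\to K$. Since a field is (trivially) an abelian von Neumann regular ring, $K$ — hence any $R$-module $R$-isomorphic to $K$ — is abelian endoregular.

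\emph{Necessity.} Suppose $M\neq 0$ is torsion-free and abelian endoregular. The main step is to show that $M$ is divisible. For $0\neq r\in R$, let $\mu_r\in\End_R(M)$ be multiplication by $r$; this is $R$-linear because $R$ is commutative, and it is injective because $M$ is torsion-free, so $\Ker\mu_r=0$. By Proposition \ref{phikerima}, $M=\Ker\mu_r\oplus\Img\mu_r=\Img\mu_r=rM$. As $r$ was an arbitrary nonzero element of $R$, the module $M$ is divisible.

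\emph{Conclusion.} A torsion-free divisible module over a domain $R$ carries a unique $K$-vector space structure extending its $R$-action — for $m\in M$ and $0\neq b\in R$, divisibility and torsion-freeness provide a unique $m'$ with $bm'=m$, and one sets $b^{-1}m:=m'$ — and every $R$-linear endomorphism of $M$ is then automatically $K$-linear, so $\End_R(M)=\End_K(M)$. Since this ring is abelian, all its idempotents are central. But if $\dim_K M\geq 2$, choose $K$-independent $e_1,e_2\in M$, extend $\{e_1,e_2\}$ to a $K$-basis, let $p\in\End_K(M)$ be the projection onto $Ke_1$ vanishing on the span of the remaining basis vectors, and let $t\in\End_K(M)$ be any map with $t(e_1)=e_2$ and $t(e_2)=e_1$; then $pt(e_2)=e_1\neq 0=tp(e_2)$, so $p$ is a non-central idempotent, a contradiction. (Equivalently, $Ke_1$ and $Ke_2$ are distinct isomorphic $M$-generated submodules of $M$, contradicting Proposition \ref{propinc}.) Hence $\dim_K M=1$, and since $M\neq 0$ we get $M\cong K$ as $R$-modules. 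I expect the only genuinely delicate point to be the classical passage ``torsion-free divisible over a domain $\Rightarrow$ $K$-vector space with $\End_R=\End_K$''; everything else is a one-line use of Proposition \ref{phikerima} together with a non-commuting pair of operators (or Proposition \ref{propinc}). The case $M=0$ is excluded because $K\neq 0$, so $M=0$ cannot be $R$-isomorphic to $K$ (and its endomorphism ``ring'' is not a ring with identity $1\neq 0$).
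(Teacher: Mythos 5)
Your argument is correct, but it takes a genuinely different route from the paper's. The paper's own proof is a two-line reduction to earlier results: by \cite[Lemma 4.21]{leemodules}, a torsion-free module over a commutative domain is endoregular if and only if it is $R$-isomorphic to a direct sum of copies of the field of fractions $Q$, and Proposition \ref{sums} then forces the number of copies to be one, since for two or more copies the summands admit nonzero homomorphisms between them and hence are not fully invariant in the direct sum. You instead argue from scratch: Proposition \ref{phikerima} applied to multiplication by $r\neq 0$ (injective by torsion-freeness) gives $M=rM$, so $M$ is divisible; the classical fact that a torsion-free divisible module over a domain carries a $K$-vector space structure with $\End_R(M)=\End_K(M)$ reduces the problem to linear algebra, where abelianness of the endomorphism ring rules out $\dim_K M\geq 2$ via an explicit non-central idempotent (or, as you note, via Proposition \ref{propinc}(b), since $Ke_1$ and $Ke_2$ would be distinct isomorphic $M$-generated submodules). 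What your approach buys is self-containedness: you use only the $\Ker\oplus\Img$ characterization and standard facts about divisible modules, avoiding both Lee's classification of endoregular torsion-free modules and the direct-sum criterion; the price is that you essentially re-prove, in this special situation, the rank-one part of what \cite[Lemma 4.21]{leemodules} already supplies, whereas the paper's argument is shorter and displays the corollary as an instance of its general machinery (Proposition \ref{sums}). The $M=0$ boundary case is treated by convention in both arguments, so your closing remark about it is harmless.
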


\begin{proof}
In \cite[Lemma 4.21]{leemodules} is proved that, under these conditions, $M$ is endoregular if and only if $M$ is $R$-isomorphic to a direct sum of copies of the field of fractions $Q$ of $R$. Therefore, $M$ is abelian endoregular if and only if $M\cong Q$ as $R$-modules by Proposition \ref{sums}.
\end{proof}

Recall that a module $M$ is said to be \emph{retractable} if $\Hom_R(M,N)\neq 0$ for each nonzero submodule $N$ of $M$. A module $M$ is \emph{polyform} if for any submodule $K$ of $M$ and any $0\neq \varphi:K\to M$, $\Ker\varphi$ is not essential in $K$ \cite{wisbauermodules}. A similar notion is that of $\mathcal{K}$-nonsigular modules. A module $M$ is \emph{$\mathcal{K}$-nonsingular} if for all $0\neq\varphi\in\End_R(M)$, $\Ker\varphi$ is not essential in $M$ \cite{rizvik}. It is a fact that every endoregular module is $\mathcal{K}$-nonsingular (see Proposition \ref{kerdirecsum}).

The concepts of polyform and $\mathcal{K}$-nonsingular modules generalize that of nonsingular ring. It is easy to see that every polyform module is $\mathcal{K}$-nonsingular. The converse is not true in general. For, consider the $\mathbb{Z}$-module $\mathbb{Q}\oplus\mathbb{Z}_p$ with $p$ a prime number. Note that $\mathbb{Q}\oplus\mathbb{Z}_p$ is abelian endoregular by Proposition \ref{sums} and so it is $\mathcal{K}$-nonsingular. The homomorphism $\varphi:\mathbb{Z}\oplus\mathbb{Z}_p\to \mathbb{Q}\oplus\mathbb{Z}_p$ given by $\varphi(n,\bar{m})=(0,\bar{n})$ has essential kernel, hence $\mathbb{Q}\oplus\mathbb{Z}_p$ is not polyform. However, there are cases when polyform and $\mathcal{K}$-nonsingular modules coincide. Let $\widehat{M}=\Hom_R(M,E(M))M$ be the trace of $M$ into its injective hull $E(M)$ in $R$-Mod, that is, $\widehat{M}$ is the greatest $M$-generated submodule of $E(M)$. In \cite{wisbauerfoundations} it is proved that $\widehat{M}$ is a quasi-injective module and it is an essential extension of $M$.  Moreover, polyform modules are characterized as those modules $M$ such that $\widehat{M}$ is endoregular.

\begin{lemma}[11.1, \cite{wisbauermodules}]\label{polyendo}
The following conditions are equivalent for a module $M$:
\begin{enumerate}[label=\emph{(\alph*)}]
\item $M$ is polyform.
\item $\widehat{M}$ is an endoregular module.
\end{enumerate}
\end{lemma}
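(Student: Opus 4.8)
The plan is to run the equivalence through the property that $\widehat{M}$ is $\mathcal{K}$-nonsingular, i.e. that no nonzero endomorphism of $\widehat{M}$ has essential kernel: I will prove that (a) is equivalent to ``$\widehat{M}$ is $\mathcal{K}$-nonsingular'' and that the latter is equivalent to (b). One half of the second equivalence is free, since endoregular modules are $\mathcal{K}$-nonsingular (Proposition \ref{kerdirecsum}). Throughout I use the facts recalled above that $\widehat{M}$ is quasi-injective, that $M\ess\widehat{M}$, and that $\widehat{M}$ is $M$-generated.

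For (a)$\Rightarrow$``$\widehat{M}$ $\mathcal{K}$-nonsingular'' I first observe that polyformity of $M$ forces every $g\in\Hom_R(M,\widehat{M})$ with $\Ker g\ess M$ to vanish: since $M\ess\widehat{M}$, the preimage $g^{-1}(M)$ is essential in $M$, and the corestriction $g^{-1}(M)\to M$ of $g$ has essential kernel $\Ker g$, hence is zero by polyformity; thus $g^{-1}(M)\subseteq\Ker g$, so $\Img g\cap M=g(g^{-1}(M))=0$ and therefore $g=0$. Now let $f\in\End_R(\widehat{M})$ with $\Ker f\ess\widehat{M}$. Writing $\widehat{M}=\sum\{h(M)\mid h\in\Hom_R(M,\widehat{M})\}$, each composite $fh$ has kernel $h^{-1}(\Ker f)$, which is essential in $M$; by the previous step $fh=0$ for every $h$, so $\Img f=\sum fh(M)=0$. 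Hence $\widehat{M}$ is $\mathcal{K}$-nonsingular.

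For the converse, assume $\widehat{M}$ is $\mathcal{K}$-nonsingular, let $K\leq M$ and let $f\colon K\to M$ be nonzero; I must show $\Ker f\not\ess K$. Composing with $M\hookrightarrow\widehat{M}$ and using quasi-injectivity, extend $f$ to $\hat f\in\End_R(\widehat{M})$. Since quasi-injective modules are extending, $K$ is essential in some direct summand $C\dleq\widehat{M}$; write $\widehat{M}=C\oplus C'$ with $e\in\End_R(\widehat{M})$ the projection onto $C$. Then $(\hat fe)|_K=f\neq0$, so $\hat fe\neq0$. If $\Ker f$ were essential in $K$, then (as $K\ess C$) it would be essential in $C$, hence $\Ker(\hat fe)=(\Ker\hat f\cap C)\oplus C'\supseteq\Ker f\oplus C'$ would be essential in $\widehat{M}$, contradicting $\mathcal{K}$-nonsingularity. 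So $\Ker f$ is not essential in $K$ and $M$ is polyform.

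It remains to deduce (b) from ``$\widehat{M}$ $\mathcal{K}$-nonsingular'', for which I check the Azumaya criterion of Proposition \ref{azu} for $\widehat{M}$. Given $f\in\End_R(\widehat{M})$, extendingness gives $\Ker f\ess C\dleq\widehat{M}$; with $e$ the projection onto $C$ the computation above shows $\Ker(fe)=\Ker f\oplus C'\ess\widehat{M}$, so $fe=0$, whence $f|_C=0$ and $\Ker f=C\dleq\widehat{M}$. For the image, $f|_{C'}$ is a monomorphism whose image is $\Img f$, and since $C'$ is a direct summand of the quasi-injective module $\widehat{M}$ it is $\widehat{M}$-injective; extending the inverse isomorphism $\Img f\to C'$ to a map $\widehat{M}\to C'$ and composing with $f|_{C'}$ yields an idempotent of $\End_R(\widehat{M})$ with image $\Img f$, so $\Img f\dleq\widehat{M}$. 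Hence $\End_R(\widehat{M})$ is von Neumann regular. (Alternatively one could invoke the classical description of the Jacobson radical of the endomorphism ring of a quasi-injective module.) I expect the only real difficulty to be the bookkeeping needed to reconcile the ``homomorphisms into $M$'' flavour of polyformity with the ``endomorphisms of $\widehat{M}$'' flavour of endoregularity; both bridges — replacing a submodule of $\widehat{M}$ by its intersection with, or preimage of, the essential submodule $M$, and using that $\widehat{M}$ is $M$-generated — are routine, as is the structure theory of quasi-injective modules invoked above.
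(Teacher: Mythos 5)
Your proof is correct. Note, though, that the paper itself gives no argument for this lemma: it is quoted verbatim as \cite[11.1]{wisbauermodules}, so there is no ``paper proof'' to match; what you have produced is a self-contained derivation of the cited fact. Your route --- splitting the equivalence through the intermediate condition ``$\widehat{M}$ is $\mathcal{K}$-nonsingular'' --- is precisely the bridge the paper exploits afterwards in Lemma \ref{polyknon}, where the implication ($\mathcal{K}$-nonsingular quasi-injective $\Rightarrow$ endoregular) is delegated to \cite[22.1]{wisbauerfoundations}; you re-prove that step directly, using that a quasi-injective module is extending, that direct summands of $\widehat{M}$ are $\widehat{M}$-injective, and Azumaya's criterion (Proposition \ref{azu}) to get $\Ker f\dleq\widehat{M}$ and $\Img f\dleq\widehat{M}$, while the two transfers between $M$ and $\widehat{M}$ (maps with essential kernel into $\widehat{M}$ vanish when $M$ is polyform, since $\widehat{M}$ is $M$-generated and $M\ess\widehat{M}$; conversely extending $f\colon K\to M$ to $\hat f\in\End_R(\widehat{M})$ and cutting down by the projection onto a closure of $K$) are carried out correctly. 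In short: all four implications check out, and what your approach buys over the paper's is independence from the two external references --- at the cost of re-doing the standard structure theory of endomorphism rings of quasi-injective modules that those references encapsulate (your parenthetical alternative, via the description of the Jacobson radical of $\End_R(\widehat{M})$, is exactly the content of the cited \cite[22.1]{wisbauerfoundations}).
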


\begin{lemma}\label{polyknon}
Consider the following conditions for a module $M$:
\begin{enumerate}[label=\emph{(\arabic*)}]
\item $E(M)$ is $\mathcal{K}$-nonsingular;
\item $\widehat{M}$ is $\mathcal{K}$-nonsingular.
\item $M$ is polyform.
\item $\widehat{M}$ is polyform.
\end{enumerate}
Then \emph{(1)}$\Rightarrow$\emph{(2)}$\Leftrightarrow$\emph{(3)}$\Leftrightarrow$\emph{(4)}. If $M$ is not contained in $\Ker\varphi$ for all $\varphi\in\End_R(E(M))$ then the four conditions are equivalent.
\end{lemma}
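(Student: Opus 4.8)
The plan is to isolate one sublemma that does essentially all the work: \emph{a quasi-injective $\mathcal{K}$-nonsingular module is polyform}. Granting this, every stated implication reduces to bookkeeping with the trace $\widehat{M}=\Hom_R(M,E(M))M$, using that $\widehat{M}$ is an essential quasi-injective submodule of $E(M)$, that $\widehat{M}$ is fully invariant in $E(M)$ (since $gf\in\Hom_R(M,E(M))$ whenever $g\in\End_R(E(M))$ and $f\in\Hom_R(M,E(M))$, so $g(\widehat{M})\subseteq\widehat{M}$), and that $M$ is polyform iff $\widehat{M}$ is endoregular (Lemma \ref{polyendo}). I also use that submodules of polyform modules are polyform and that every endoregular module is $\mathcal{K}$-nonsingular (Proposition \ref{kerdirecsum}).

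To prove the sublemma, let $Q$ be quasi-injective and $\mathcal{K}$-nonsingular, let $K\leq Q$ and $0\neq\varphi\colon K\to Q$; I want $\Ker\varphi$ not essential in $K$. I pick a complement $L$ of $K$ in $Q$, so $K\oplus L\ess Q$, and extend $\varphi$ to $\varphi'\colon K\oplus L\to Q$ by $\varphi'(k+l)=\varphi(k)$, so that $\Ker\varphi'=\Ker\varphi\oplus L$. If $\Ker\varphi$ were essential in $K$, then $\Ker\varphi'=\Ker\varphi\oplus L\ess K\oplus L\ess Q$. Quasi-injectivity extends $\varphi'$ to $\widetilde\varphi\in\End_R(Q)$, and $\Ker\widetilde\varphi\supseteq\Ker\varphi'$ is essential in $Q$, so $\mathcal{K}$-nonsingularity forces $\widetilde\varphi=0$, contradicting $\widetilde\varphi|_K=\varphi\neq0$. \emph{This complement-and-extension maneuver is the one real obstacle}: it is what converts the endomorphism-level hypothesis ($\mathcal{K}$-nonsingular) into the partial-homomorphism conclusion (polyform); everything else is formal.

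Now the equivalences. (3)$\Rightarrow$(4): $M$ polyform $\Rightarrow$ $\widehat{M}$ endoregular (Lemma \ref{polyendo}) $\Rightarrow$ $\widehat{M}$ $\mathcal{K}$-nonsingular (Proposition \ref{kerdirecsum}) $\Rightarrow$ $\widehat{M}$ polyform (sublemma, as $\widehat{M}$ is quasi-injective). (4)$\Rightarrow$(3): $M\leq\widehat{M}$, and submodules of polyform modules are polyform. (3)$\Rightarrow$(2) is the first half of (3)$\Rightarrow$(4), and (2)$\Rightarrow$(3) follows because the sublemma makes $\widehat{M}$ polyform and hence its submodule $M$ is polyform. (1)$\Rightarrow$(2): given $0\neq\psi\in\End_R(\widehat{M})$, extend $\psi$ along $\widehat{M}\hookrightarrow E(M)$ to $\widetilde\psi\in\End_R(E(M))$ with $\widetilde\psi|_{\widehat{M}}=\psi\neq0$; $\mathcal{K}$-nonsingularity of $E(M)$ gives $0\neq U\leq E(M)$ with $U\cap\Ker\widetilde\psi=0$, and then $0\neq U\cap\widehat{M}$ (essentiality) meets $\Ker\psi=\Ker\widetilde\psi\cap\widehat{M}$ trivially, so $\widehat{M}$ is $\mathcal{K}$-nonsingular.

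Finally, under the extra hypothesis I prove (2)$\Rightarrow$(1). Let $0\neq\varphi\in\End_R(E(M))$. Since $\widehat{M}$ is fully invariant in $E(M)$, $\varphi$ restricts to $\varphi|_{\widehat{M}}\in\End_R(\widehat{M})$, and this restriction is nonzero because $M\subseteq\widehat{M}$ while $\varphi|_M\neq0$ by hypothesis. As $\widehat{M}$ is $\mathcal{K}$-nonsingular, $\Ker(\varphi|_{\widehat{M}})$ is not essential in $\widehat{M}$, so there is $0\neq U\leq\widehat{M}$ with $U\cap\Ker\varphi=0$; hence $\Ker\varphi$ is not essential in $E(M)$, and $E(M)$ is $\mathcal{K}$-nonsingular. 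The hypothesis is exactly what is needed here, since otherwise some nonzero $\varphi$ could satisfy $\widehat{M}\subseteq\Ker\varphi$, which would make $\Ker\varphi$ essential in $E(M)$ because $\widehat{M}\ess E(M)$, and then (1) fails while (2)--(4) still hold.
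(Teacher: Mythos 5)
Your proof is correct, but it routes the crucial step differently from the paper. The paper gets (2)$\Rightarrow$(3) by citing the fact that a $\mathcal{K}$-nonsingular quasi-injective module has a von Neumann regular endomorphism ring \cite[22.1]{wisbauerfoundations} and then invoking Lemma \ref{polyendo}, and it proves the last implication under the extra hypothesis in the form (4)$\Rightarrow$(1), applying polyformness of $\widehat{M}$ to the restriction $\varphi|_M\colon M\to\widehat{M}$. You instead prove an elementary sublemma --- quasi-injective $+$ $\mathcal{K}$-nonsingular $\Rightarrow$ polyform --- by the complement-and-extension argument (extend $\varphi\colon K\to Q$ by zero on a complement $L$ of $K$, lift to $\widetilde\varphi\in\End_R(Q)$ by quasi-injectivity, and use $\Ker\varphi\oplus L\ess Q$), and you prove (2)$\Rightarrow$(1) via $\mathcal{K}$-nonsingularity of $\widehat{M}$ together with the full invariance of the trace $\widehat{M}$ in $E(M)$. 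Your route is more self-contained: it avoids the appeal to the regularity theorem for quasi-injective modules and in fact establishes the general equivalence ``polyform $=$ $\mathcal{K}$-nonsingular'' for quasi-injective modules, while still using Lemma \ref{polyendo} only for (3)$\Rightarrow$(2)/(4). The paper's route is shorter and keeps the endoregularity of $\widehat{M}$ explicit, which is what gets reused later (Theorem \ref{injhullsr}). All your individual steps check out (the extension in (1)$\Rightarrow$(2), the essentiality bookkeeping, $\Ker(\varphi|_{\widehat{M}})=\Ker\varphi\cap\widehat{M}$); the only loose point is the closing side remark on necessity of the hypothesis, where the passage from $M\subseteq\Ker\varphi$ to $\widehat{M}\subseteq\Ker\varphi$ is not justified --- but it is also unnecessary, since $M\subseteq\Ker\varphi$ with $M\ess E(M)$ already makes $\Ker\varphi$ essential, and that remark is not part of the proof of the stated implications.
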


\begin{proof}
(1)$\Rightarrow$(2) Let $0\neq\varphi:\widehat{M}\to \widehat{M}$ be an endomorphism such that $\Ker\varphi\ess\widehat{M}$. Since $E(M)$ is injective, there exists $\overline{\varphi}:E(M)\to E(M)$ such that $\overline{\varphi}(\widehat{M})=\varphi(\widehat{M})$. Hence $\Ker\varphi\leq\Ker\overline{\varphi}$. This implies that $\Ker\overline{\varphi}\ess E(M)$. Thus $E(M)$ is not $\mathcal{K}$-nonsingular.

(2)$\Rightarrow$(3) Since $\widehat{M}$ is a $\mathcal{K}$-nonsingular quasi-injective module, $\End_R(\widehat{M})$ is a von Neumann regular ring \cite[22.1]{wisbauerfoundations}. It follows from Lemma \ref{polyendo} that $M$ is polyform.

(3)$\Rightarrow$(4) Just note that $\widehat{\widehat{M}}=\widehat{M}$. Hence by Lemma \ref{polyendo} $\widehat{M}$ is polyform.

(4)$\Rightarrow$(2) It follows from the fact that every polyform module is $\mathcal{K}$-nonsingular.
\vspace{5pt}

Now, assume that $M$ is not contained in $\Ker\varphi$ for all $0\neq \varphi\in\End_R(E(M))$.

\noindent (4)$\Rightarrow$(1) Let $0\neq \varphi:E(M)\to E(M)$ such that $\Ker\varphi\ess E(M)$. Hence $0\neq \Ker\varphi\cap M\ess M$. By hypothesis $\varphi|_M\neq 0$. Note that $\varphi|_M:M\to \widehat{M}$ and $\Ker\varphi|_M=\Ker\varphi\cap M$. But $\Ker\varphi|_M\ess M$ which is a contradiction because $\widehat{M}$ is polyform. Thus $E(M)$ is $\mathcal{K}$-nonsingular.
\end{proof}

\begin{rem}
It is not difficult to see that for a nonsingular module, the four conditions in Lemma \ref{polyknon} are equivalent.
\end{rem}

Recall that a submodule $N$ of a module $M$ is said to be \emph{essentially closed} if $N\ess L\leq M$ then $L=N$.
\begin{prop}\label{emsen}
Let $M$ be a module. If $E(M)$ is abelian endoregular then $M$ is polyform, every essentially closed submodule of $M$ is fully invariant and $\End_R(M)$ is a reduced ring.
\end{prop}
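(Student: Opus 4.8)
The plan is to exploit the fact that $E(M)$ is abelian endoregular, hence endoregular, and to push the three conclusions through restriction to $M$. First I would observe that $E(M)$ endoregular is equivalent to $E(M)$ being a $\mathcal{K}$-nonsingular quasi-injective (indeed injective) module; since $E(M)$ is trivially a self-injective module whose endomorphism ring is von Neumann regular, the relevant hypothesis of Lemma \ref{polyknon}, namely condition (1), holds. To invoke the equivalence with polyformness of $M$ I need the side condition that $M \not\subseteq \Ker\varphi$ for all $0\ne\varphi\in\End_R(E(M))$; this follows because $M\ess E(M)$, so any $\varphi$ with $M\subseteq\Ker\varphi$ would have essential, hence by injectivity dense, kernel, forcing $\varphi=0$ by $\mathcal{K}$-nonsingularity of $E(M)$. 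Thus Lemma \ref{polyknon} gives that $M$ is polyform, which is the first conclusion. Equivalently $\widehat{M}$ is endoregular by Lemma \ref{polyendo}.

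For the statement about essentially closed submodules: if $N\ess L\le M$ forces $L=N$, then $N$ is a complement (closed) submodule of $M$, hence also of $E(M)$ in the sense that $N$ has an essentially closed closure $\overline{N}$ in $E(M)$ with $\overline{N}\cap M = N$ and $\overline{N}\dleq E(M)$ because $E(M)$ is injective (closed submodules of an injective module are direct summands). Since $E(M)$ is abelian endoregular, every direct summand of $E(M)$ is fully invariant by the Corollary following Proposition \ref{subfi}. So $\overline{N}$ is fully invariant in $E(M)$. Now for any $\alpha\in\End_R(M)$, extend $\alpha$ to $\widetilde{\alpha}\in\End_R(E(M))$ by injectivity; then $\widetilde\alpha(\overline N)\subseteq\overline N$, and restricting, $\alpha(N)=\widetilde\alpha(N)\subseteq\widetilde\alpha(\overline N)\cap M\subseteq\overline N\cap M = N$. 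Hence $N$ is fully invariant in $M$. The point to be careful about is verifying $\overline N\cap M = N$: this holds because $N$ is essentially closed in $M$ and $\overline N$ is an essential extension of $N$ inside $E(M)$, so $\overline N\cap M$ is an essential extension of $N$ contained in $M$, forcing $\overline N\cap M = N$.

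For the last conclusion, that $\End_R(M)$ is reduced (no nonzero nilpotents), suppose $\varphi\in\End_R(M)$ with $\varphi^2=0$. Extend to $\widetilde\varphi\in\End_R(E(M))$; then $\widetilde\varphi^2$ restricts to $0$ on $M$, so $M\subseteq\Ker(\widetilde\varphi^2)$, and since $M\ess E(M)$ and $E(M)$ is $\mathcal{K}$-nonsingular we get $\widetilde\varphi^2=0$. But $\End_R(E(M))$ is abelian regular, hence reduced by \cite[Theorem 3.2]{goodearlneumann}, so $\widetilde\varphi=0$, whence $\varphi=0$. Therefore $\End_R(M)$ has no nonzero nilpotents, i.e.\ it is reduced. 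The main obstacle I anticipate is the bookkeeping in the second part — correctly building the closure $\overline N$ of $N$ inside $E(M)$ and checking $\overline N\cap M = N$ together with the direct-summand property — but this is standard complement/closure theory in injective modules and should go through cleanly.
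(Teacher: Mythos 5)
Your proposal is correct and follows essentially the same route as the paper: polyformness via Lemma \ref{polyknon} from the $\mathcal{K}$-nonsingularity of $E(M)$, full invariance of essentially closed submodules by passing to their closure (a direct summand, hence fully invariant by Proposition \ref{subfi}) in $E(M)$ and intersecting back with $M$, and reducedness of $\End_R(M)$ by extending maps to the abelian regular ring $\End_R(E(M))$. The only cosmetic difference is that you verify the side condition of Lemma \ref{polyknon}, which is unnecessary since the implication (1)$\Rightarrow$(3) you need holds unconditionally.
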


\begin{proof}
Since $E(M)$ is endoregular, $E(M)$ is $\mathcal{K}$-nonsingular and by Lemma \ref{polyknon} $M$ is polyform. Moreover, $S=\End_R(M)$ can be embedded in $T=\End_R(E(M))$ as subring. This implies that $S$ is a reduced ring. Let $N\leq M$ be an essentially closed submodule. There is a decomposition $E(M)=K\oplus L$ such that $N\ess K$. Then $N\ess K\cap M\subseteq M$. This implies that $N=K\cap M$. On the other hand, since $K\dleq E(M)$, $K$ is fully invariant in $E(M)$ (Proposition \ref{subfi}). Hence $N=K\cap M$ is fully invariant in $M$.
\end{proof}

\begin{thm}\label{injhullsr}
Consider the following conditions for a module $M$:
\begin{enumerate}[label=\emph{(\arabic*)}]
\item $\widehat{M}$ is an abelian endoregular module;
\item $M$ is polyform, every essentially closed submodule of $M$ is fully invariant and $\End_R(M)$ is a reduced ring.
\end{enumerate}
Then \emph{(1)}$\Rightarrow$\emph{(2)}. Moreover, if $M$ is retractable then the converse holds.
\end{thm}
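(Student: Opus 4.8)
The plan is to prove the two implications separately, leaning on Proposition \ref{emsen} (the $E(M)$-analogue), Proposition \ref{subfi}, Lemma \ref{polyendo} and Lemma \ref{polyknon}, together with the standing facts that $\widehat{M}$ is a quasi-injective essential extension of $M$.

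For (1)$\Rightarrow$(2) I would run the argument of Proposition \ref{emsen} with $\widehat{M}$ in place of $E(M)$, using quasi-injectivity instead of injectivity. Since $\widehat{M}$ is endoregular it is $\mathcal{K}$-nonsingular, so $M$ is polyform by Lemma \ref{polyknon} (equivalently Lemma \ref{polyendo}). For the endomorphism ring, each $\varphi\in\End_R(M)$ lifts by quasi-injectivity to some $\tilde{\varphi}\in\End_R(\widehat{M})$ with $\tilde{\varphi}|_M=\varphi$; if $\varphi^2=0$ then $M\subseteq\Ker\tilde{\varphi}^2\ess\widehat{M}$, so $\tilde{\varphi}^2=0$ by $\mathcal{K}$-nonsingularity, and then $\tilde{\varphi}=0$ since $\End_R(\widehat{M})$, being abelian regular, is reduced by \cite[Theorem 3.2]{goodearlneumann}; hence $\varphi=0$ and $\End_R(M)$ is reduced. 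Finally, for $N$ essentially closed in $M$ I would choose a submodule $K$ of $\widehat{M}$ that is essentially closed in $\widehat{M}$ with $N\ess K$; complement submodules of the quasi-injective module $\widehat{M}$ are direct summands, so $K\dleq\widehat{M}$, and then $K$ is fully invariant in $\widehat{M}$ (a direct summand is $\widehat{M}$-generated, so Proposition \ref{subfi} applies). Since $N\ess K$ and $N$ is essentially closed in $M$ we get $N=K\cap M$, and lifting $\alpha\in\End_R(M)$ to $\tilde{\alpha}\in\End_R(\widehat{M})$ gives $\alpha(N)=\tilde{\alpha}(K\cap M)\subseteq\tilde{\alpha}(K)\cap\tilde{\alpha}(M)\subseteq K\cap M=N$, so $N$ is fully invariant in $M$.

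For the converse, assume in addition $M$ retractable. Then $M$ polyform gives $\widehat{M}$ endoregular (Lemma \ref{polyendo}), so it suffices to show $\End_R(\widehat{M})$ is abelian, and by the argument in the proof of (b)$\Rightarrow$(a) of Proposition \ref{subfi} (with \cite[Lemma 3.1]{goodearlneumann}) this reduces to showing every direct summand of $\widehat{M}$ is fully invariant. So I would fix an idempotent $e\in\End_R(\widehat{M})$, set $K=e\widehat{M}$ and $K'=(1-e)\widehat{M}$, and suppose for contradiction that $\psi(K)\not\subseteq K$ for some $\psi$; then $\alpha:=(1-e)\psi$ restricts to a nonzero map $\alpha|_K\colon K\to K'$. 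As $\widehat{M}$ is endoregular and $K\dleq\widehat{M}$, Proposition \ref{kerdirecsum}(2) gives $K=\Ker(\alpha|_K)\oplus K_0$ with $K_0\ne 0$ and an isomorphism $\theta:=\alpha|_{K_0}\colon K_0\xrightarrow{\ \sim\ }K_1\leq K'$, where $K_0\cap K_1\subseteq K\cap K'=0$. Using $M\ess\widehat{M}$, put $D=(K_0\cap M)\cap\theta^{-1}(K_1\cap M)$: both intersected submodules are essential in $K_0$, so $0\ne D\ess K_0$, while $D\subseteq M$, $\theta(D)\subseteq K_1\cap M\subseteq M$, and $D\cap\theta(D)\subseteq K_0\cap K_1=0$. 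Pick closures $\bar D\supseteq D$ and $\overline{\theta(D)}\supseteq\theta(D)$ inside $M$; these are essentially closed in $M$, hence fully invariant in $M$ by hypothesis, and $\bar D\cap\overline{\theta(D)}=0$ (a nonzero element of the intersection would meet $D$, hence meet $\theta(D)$). By retractability there is $0\ne g\in\End_R(M)$ with $g(M)\subseteq D$; then $g(\theta(D))\subseteq g(\overline{\theta(D)})\cap g(M)\subseteq\overline{\theta(D)}\cap\bar D=0$ since $\overline{\theta(D)}$ is fully invariant. Hence $h:=\theta\circ g$ is a well-defined endomorphism of $M$ (because $g(M)\subseteq D\subseteq K_0$ lies in the domain of $\theta$ and $\theta(D)\subseteq M$), it is nonzero ($\theta$ injective, $g\ne 0$), and $h^2=0$: for $m\in M$ we have $h(m)\in\theta(D)$, so $g(h(m))\in g(\theta(D))=0$ and $h^2(m)=\theta(g(h(m)))=0$. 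This contradicts that $\End_R(M)$ is reduced, so $\psi(K)\subseteq K$ for all $\psi$, $K$ is fully invariant, and $\widehat{M}$ is abelian endoregular.

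The hard part is exactly this converse: the hypotheses in (2) concern $M$, yet $\End_R(M)$ is in general far from onto $\End_R(\widehat{M})$, so they cannot be transported up blindly. The device above turns the obstruction inward: a non-central idempotent of $\End_R(\widehat{M})$ produces, via Proposition \ref{kerdirecsum}, a pair of isomorphic disjoint submodules $K_0\cong K_1$ of $\widehat{M}$; intersecting with $M$ and invoking retractability manufactures a nonzero $g\colon M\to D$ whose composite with $\theta$ is a nonzero square-zero endomorphism of $M$, which reducedness of $\End_R(M)$ forbids. The points to watch are that $\theta\circ g$ genuinely maps $M$ into $M$, and the routine (but essential) essentiality bookkeeping that makes $D$, $\bar D$ and $\overline{\theta(D)}$ behave; everything else is formal.
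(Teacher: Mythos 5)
Your argument is correct, and while your (1)$\Rightarrow$(2) follows the paper's route (the paper only says it is ``similar to Proposition \ref{emsen}''; you supply the quasi-injective analogues: extending endomorphisms of $M$ to $\widehat{M}$ and using that essentially closed submodules of the quasi-injective module $\widehat{M}$ are direct summands), your converse is genuinely different. The paper embeds $S=\End_R(M)$ in $T=\End_R(\widehat{M})$ with $S_S\ess T_S$ (\cite[11.5]{wisbauermodules}, which is where retractability enters there), and kills a square-zero $0\neq\varphi\in T$ by picking $\psi\in T$ with $0\neq\varphi\psi(M)\subseteq M$, observing that $\varphi\psi(\widehat{M})\cap M$ is essentially closed, hence fully invariant, so $(\varphi\psi)^2(M)=0$, contradicting reducedness of $S$; abelian regularity then follows from \cite[Theorem 3.2]{goodearlneumann}. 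You instead make the idempotents of $T$ central by proving every direct summand of $\widehat{M}$ is fully invariant: a failure yields, via Proposition \ref{kerdirecsum}, disjoint isomorphic nonzero submodules $K_0\cong K_1$ of $\widehat{M}$, and cutting down to $D$, $\theta(D)\leq M$, taking essential closures (fully invariant by hypothesis) and using retractability to get $g:M\to D$, you manufacture the nonzero square-zero endomorphism $\theta g$ of $M$, again contradicting reducedness. Your route trades the citation of \cite[11.5]{wisbauermodules} for a hands-on construction relying only on $M\ess\widehat{M}$, Proposition \ref{kerdirecsum} and retractability, which makes the role of each hypothesis in (2) explicit (the closed-submodule condition is used precisely to force $g(\theta(D))=0$); the paper's route is shorter but works at the ring level and hides retractability inside the quoted essentiality of $S$ in $T$. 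Your bookkeeping (that $\theta g$ maps $M$ into $M$, that it is nonzero with square zero, and that the closures intersect trivially) is accurate.
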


\begin{proof}
(1)$\Rightarrow$(2) This is similar to the proof of Proposition \ref{emsen}.

Now suppose $M$ is retractable. (2)$\Rightarrow$(1) Since $M$ is polyform and retractable, $S=\End_R(M)$ can be embedded in $T=\End_R(\widehat{M})$ as subring and $S_S$ is essential in $T_S$ \cite[11.5]{wisbauermodules}. By Lemma \ref{polyendo}, $T$ is a von Neumann regular ring. Let $0\neq\varphi\in T$ such that $\varphi^2=0$. Since $S_S\ess T_S$, there exists $\psi\in T$ with $\psi(M)\subseteq M$ such that $0\neq\varphi \psi(M)\subseteq M$. On the other hand, $\varphi \psi(\widehat{M})\dleq \widehat{M}$ because $\widehat{M}$ is endoregular. Since $M$ is polyform, $\varphi \psi(\widehat{M})\cap M$ is essentially closed in $M$. Therefore, $\varphi\psi(\widehat{M})\cap M$ is fully invariant in $M$. Hence
\[\psi\varphi\psi(M)\subseteq \psi(\varphi\psi(\widehat{M})\cap M)\subseteq \varphi\psi(\widehat{M})\cap M.\]
This implies that $(\varphi\psi)^2(M)=0$. Since $S$ is reduced, $\varphi\psi=0$, contradiction. Thus $\varphi=0$. Therefore $T$ is an abelian regular ring because $T$ is regular and has no nonzero nilpotent elements \cite[Theorem 3.2]{goodearlneumann}.
\end{proof}

The following corollary characterizes those rings whose maximal ring of quotients is abelian regular. The characterization of these rings is due to Renault and Utumi (see \cite[Ch.XII, Proposition 5.2]{stenstromrings}). Note that for a ring $R$, we have that $\widehat{R}=E(R)$ and when $R$ is nonsingular $Q_{max}(R)\cong\End_R(E(R))$.

\begin{cor}
The following conditions are equivalent for a ring $R$:
\begin{enumerate}[label=\emph{(\alph*)}]
\item $Q_{max}(R)$ is an abelian regular ring.
\item $R$ is a nonsingular reduced ring and every essentially closed right ideal is two-sided.
\end{enumerate}
\end{cor}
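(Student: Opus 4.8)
The plan is to deduce this from Theorem \ref{injhullsr} applied to the module $M=R_R$, after matching the module-theoretic notions appearing there with their ring-theoretic counterparts. The dictionary I will use is: $R_R$ is a generator, so its trace $\widehat{R}$ in $E(R)$ is all of $E(R)$; $\End_R(R_R)\cong R$ via left multiplications, so ``$\End_R(R_R)$ reduced'' means ``$R$ reduced''; a submodule of $R_R$, i.e.\ a right ideal, is fully invariant exactly when it is two-sided; $R_R$ is retractable, since $\Hom_R(R,N)\cong N\neq 0$ for every nonzero right ideal $N$; and, for the module $R_R$, being polyform is equivalent to $R$ being right nonsingular (a nonsingular module is polyform, a polyform module is $\mathcal{K}$-nonsingular, and for $R_R$ being $\mathcal{K}$-nonsingular is precisely right nonsingularity). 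Finally I will use, as recorded just before the statement (see \cite[Ch.\ XII]{stenstromrings}), that when $R$ is right nonsingular one has $Q_{max}(R)\cong\End_R(E(R))=\End_R(\widehat{R})$.

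For (a)$\Rightarrow$(b): assume $Q_{max}(R)$ is abelian regular. Then it is reduced by \cite[Theorem 3.2]{goodearlneumann}, and since $R$ embeds in $Q_{max}(R)$ as a subring, $R$ is reduced; a reduced ring is right nonsingular, so $Q_{max}(R)\cong\End_R(\widehat{R})$ is abelian regular, i.e.\ $\widehat{R}$ is abelian endoregular. Now Theorem \ref{injhullsr}, implication (1)$\Rightarrow$(2), applied to $M=R_R$, gives that $R_R$ is polyform, that every essentially closed right ideal of $R$ is fully invariant and hence two-sided, and that $\End_R(R_R)\cong R$ is reduced. This is exactly (b).

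For (b)$\Rightarrow$(a): assume $R$ is nonsingular and reduced and every essentially closed right ideal is two-sided. Translating back, $M=R_R$ is polyform, every essentially closed submodule of $R_R$ is fully invariant, and $\End_R(R_R)\cong R$ is reduced; so condition (2) of Theorem \ref{injhullsr} holds for $M=R_R$. Since $R_R$ is retractable, the theorem yields that $\widehat{R}=E(R)$ is abelian endoregular, so $\End_R(E(R))$ is an abelian regular ring; as $R$ is nonsingular, this ring is $Q_{max}(R)$, which gives (a).

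The bulk of the argument is bookkeeping, and I expect the only point requiring care to be the dictionary itself — specifically checking that the module conditions on $R_R$ say precisely what (b) says, that $\widehat{R}=E(R)$, and that $\End_R(E(R))\cong Q_{max}(R)$ holds once nonsingularity is available. No external input beyond the Utumi--Lambek identification $Q_{max}(R)\cong\End_R(E(R))$ for right nonsingular $R$, the elementary facts that a subring of a reduced ring is reduced and a reduced ring is right nonsingular, and Theorem \ref{injhullsr} itself, is needed.
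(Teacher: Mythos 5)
Your proposal is correct and matches the paper's intended derivation: the paper gives no separate proof but, via the remark that $\widehat{R}=E(R)$ and that $Q_{max}(R)\cong\End_R(E(R))$ for nonsingular $R$, it means exactly this specialization of Theorem \ref{injhullsr} to $M=R_R$ with the dictionary (fully invariant right ideal $=$ two-sided ideal, $\End_R(R_R)\cong R$, $R_R$ retractable, polyform $\Leftrightarrow$ nonsingular). Your extra observation that abelian regularity of $Q_{max}(R)$ forces $R$ reduced, hence nonsingular, correctly supplies the hypothesis needed for the identification in the direction (a)$\Rightarrow$(b).
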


\begin{cor}
If $R$ is an abelian regular ring then $Q_{max}(R)$ is abelian regular.
\end{cor}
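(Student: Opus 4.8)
The plan is to deduce this at once from the preceding Corollary, which asserts that $Q_{max}(R)$ is abelian regular precisely when $R$ is a nonsingular reduced ring all of whose essentially closed right ideals are two-sided. So it suffices to check these three properties for an abelian regular ring $R$.

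First, every von Neumann regular ring is right (and left) nonsingular, so $R$ is nonsingular. Second, since $R$ is abelian regular it is strongly regular, hence has no nonzero nilpotent elements by \cite[Theorem 3.2]{goodearlneumann}; that is, $R$ is a reduced ring. Third, in a strongly regular ring every one-sided ideal is two-sided (Arens--Kaplansky, as recalled in the Introduction); in particular every essentially closed right ideal of $R$ is two-sided. Applying the preceding Corollary then gives that $Q_{max}(R)$ is abelian regular.

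Alternatively, one could route the argument through Theorem \ref{injhullsr}: $R_R$ is retractable since $\Hom_R(R,I)=I\neq 0$ for every nonzero right ideal $I$; it is polyform because it is nonsingular; its essentially closed right ideals are two-sided, hence fully invariant in $R_R$; and $\End_R(R)\cong R$ is reduced. Thus $\widehat{R}=E(R)$ is abelian endoregular, and since $R$ is nonsingular $Q_{max}(R)\cong\End_R(E(R))$ is abelian regular. Either way there is no real obstacle here: the statement is a short chain of already-quoted facts, the only point needing a word of care being that ``essentially closed right ideal is two-sided'' follows immediately from the stronger Arens--Kaplansky fact that in a strongly regular ring every one-sided ideal is two-sided.
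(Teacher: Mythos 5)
Your proposal is correct and matches the paper's intent: the corollary is stated there without proof precisely because it follows at once from the preceding corollary (Renault--Utumi characterization), and your verification of the three conditions (nonsingular, reduced, essentially closed right ideals two-sided via Arens--Kaplansky) is exactly the intended argument. The alternative route through Theorem \ref{injhullsr} is a fine consistency check but adds nothing essentially different.
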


%
%

\section{Abelian endoregular modules as subdirect products}

Recall that a module $M$ is said to be a \emph{subdirect product} of the family $\{N_i\}_\mathcal{I}$ if $M$ is a submodule of $\prod_\mathcal{I} N_i$ such that the canonical projections restricted to $M$ are surjective. In some results we will use the concept of (quasi-)duo modules. Recall that a module $M$ is (\emph{quasi}-)\emph{duo} if every (maximal) submodule is fully invariant.

\begin{prop}\label{sdpse}
Let $M$ be a quasi-duo module. If $M$ is a subdirect product of simple modules then the idempotents in $\End_R(M)$ are central. In particular, if $M$ is endoregular then $M$ is abelian endoregular.
\end{prop}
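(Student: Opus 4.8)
The plan is to read off the behaviour of an idempotent of $\End_R(M)$ modulo each maximal submodule arising from the subdirect decomposition, and then reassemble the pieces using that these submodules intersect in zero. Write $M$ as a submodule of $\prod_{i\in\mathcal I}S_i$ with each $S_i$ simple and each restricted projection $\pi_i:M\to S_i$ surjective, and put $N_i=\Ker(\pi_i|_M)$. Since $M/N_i\cong S_i$ is simple, $N_i$ is a maximal submodule of $M$, hence $N_i\leq_{fi}M$ because $M$ is quasi-duo; moreover $\bigcap_{i\in\mathcal I}N_i=0$ since $M$ embeds into the product. Now fix an idempotent $e\in S:=\End_R(M)$. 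As $N_i$ is fully invariant, $e$ induces an idempotent $\bar e_i$ on $M/N_i$, and $\End_R(M/N_i)$ is a division ring by Schur's Lemma, so $\bar e_i=0$ or $\bar e_i=1$; equivalently, for every $i\in\mathcal I$ either $eM\subseteq N_i$ or $(1-e)M\subseteq N_i$.

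I would then show $(1-e)\varphi e=0=e\varphi(1-e)$ for all $\varphi\in S$. Fix $\varphi\in S$, $m\in M$ and $i\in\mathcal I$. If $eM\subseteq N_i$, then $e(m)\in N_i$ forces $(1-e)\varphi e(m)\in N_i$ by full invariance of $N_i$, while $e\varphi(1-e)(m)\in eM\subseteq N_i$; if instead $(1-e)M\subseteq N_i$, then $(1-e)\varphi e(m)\in(1-e)M\subseteq N_i$, while $(1-e)(m)\in N_i$ forces $e\varphi(1-e)(m)\in N_i$. In either case both elements lie in $N_i$, and since $i$ was arbitrary and $\bigcap_{i}N_i=0$ they both vanish. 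Hence $\varphi=e\varphi e+(1-e)\varphi(1-e)$, so $e\varphi=e\varphi e=\varphi e$ and $e$ is central (one may also cite \cite[Lemma 3.1]{goodearlneumann}). For the final assertion: if $M$ is moreover endoregular then $S$ is von Neumann regular with all idempotents central, hence abelian regular, so $M$ is abelian endoregular.

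There is no genuine obstacle here; the only point requiring care is the case analysis in the second paragraph, where in each of the two possibilities for $N_i$ one must verify that \emph{both} products $(1-e)\varphi e$ and $e\varphi(1-e)$ are trapped inside $N_i$. This is precisely where full invariance of the $N_i$ — and hence the quasi-duo hypothesis — is used.
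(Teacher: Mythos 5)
Your proof is correct and follows essentially the same route as the paper: the quasi-duo hypothesis makes each maximal kernel $N_i=\Ker(\pi_i|_M)$ fully invariant, endomorphisms descend to the simple quotients whose endomorphism rings are division rings, and the trivial intersection $\bigcap_i N_i=0$ reassembles the global conclusion. The only cosmetic difference is that the paper packages this as an injective ring homomorphism $\Theta:\End_R(M)\to\prod_i\End_R(S_i)$ into a product of division rings and then reads off centrality of idempotents, whereas you verify $(1-e)\varphi e=0=e\varphi(1-e)$ by a direct case analysis on each $N_i$; both arguments are valid.
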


\begin{proof}
By hypothesis there exists a monomorphism $\alpha:M\to \prod_{j\in \mathcal{J}}S_j$ with $S_j$ simple, such that $\pi_j\alpha$ are surjective for every $j\in \mathcal{J}$ where $\pi_j$ are the canonical projections. Hence $\Ker\pi_j\alpha$ is a maximal submodule of $M$. Since $M$ is quasi-duo, $\Ker\pi_j\alpha$ is fully invariant. Let $\varphi:M\to M$ be any endomorphism. Since $\Ker\pi_j\alpha$ is fully invariant in $M$, $\varphi$ induces an endomorphism $\varphi_j:S_j\to S_j$ such that $\varphi_j\pi_j\alpha=\pi_j\alpha\varphi$.
This defines a ring homomorphism
\[\Theta:\End_R(M)\to \prod_{j\in\mathcal{J}}\End_R(S_j)\]
given by $\Theta(\varphi)=(\varphi_j)$. Suppose $\Theta(\varphi)=0$, i.e., $\varphi_j=0$ for all $j\in\mathcal{J}$. Then $\pi_j\alpha\varphi=0$ for all $j\in \mathcal{J}$. This implies that $\alpha\varphi=0$. Since $\alpha$ is a monomorphism, $\varphi=0$. Thus $\Theta$ is a monomorphism. Hence, $\End_R(M)$ is a subring of a direct product of division rings. It follows that every idempotent in $\End_R(M)$ is central. As a consequence, if $M$ is endoregular then $M$ is an abelian endoregular module.
\end{proof}

The following example shows that the hypothesis on $M$ of being a quasi-duo module is not superfluous.

\begin{example}
Let $M=\mathbb{R}\oplus \mathbb{R}$. Hence $M$ is a subdirect product of simple modules but $M$ is not quasi-duo. We have that $\End_\mathbb{R}(M)=\Mat_2(\mathbb{R})$ which is a von Neumann regular ring but it is not abelian regular. Hence $M$ is an endoregular module but it is not abelian endoregular.
\end{example}

\begin{cor}
Let $M$ be an endoregular quasi-duo module. If $\Rad(M)=0$ then $M$ is an abelian endoregular module.
\end{cor}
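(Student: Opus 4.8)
The plan is to deduce this corollary directly from Proposition \ref{sdpse} by observing that the hypothesis $\Rad(M)=0$ forces $M$ to be a subdirect product of simple modules. First I would recall that, by definition, $\Rad(M)$ is the intersection of all maximal submodules of $M$; if $M$ has no maximal submodules this intersection is $M$ itself, so the assumption $\Rad(M)=0$ in particular guarantees that $M$ possesses a family $\{N_j\}_{j\in\mathcal{J}}$ of maximal submodules with $\bigcap_{j\in\mathcal{J}} N_j = 0$. (If $M=0$ the statement is trivial, so we may assume $M\neq 0$.)

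Next I would build the subdirect embedding. For each $j\in\mathcal{J}$ the quotient $S_j:=M/N_j$ is simple, and the canonical projections $p_j:M\to S_j$ assemble into a homomorphism $\alpha:M\to\prod_{j\in\mathcal{J}} S_j$, $\alpha(m)=(p_j(m))_{j\in\mathcal{J}}$. The kernel of $\alpha$ is exactly $\bigcap_{j\in\mathcal{J}} N_j = 0$, so $\alpha$ is a monomorphism, and each $p_j=\pi_j\alpha$ is surjective since it is a canonical projection onto a quotient. Hence $M$ is a subdirect product of the simple modules $\{S_j\}_{j\in\mathcal{J}}$.

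With this in hand the result is immediate: $M$ is a quasi-duo module which is a subdirect product of simple modules, so by Proposition \ref{sdpse} every idempotent of $\End_R(M)$ is central; since $M$ is moreover assumed endoregular, the same proposition gives that $M$ is abelian endoregular. I do not anticipate any real obstacle here — the only point requiring a moment's care is the degenerate case where $M$ has no maximal submodules at all, which is ruled out precisely because $\Rad(M)=0\neq M$ when $M\neq 0$; everything else is a direct appeal to Proposition \ref{sdpse}.
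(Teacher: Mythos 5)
Your proposal is correct and follows exactly the intended route: the paper states this as an immediate corollary of Proposition \ref{sdpse}, with $\Rad(M)=0$ supplying the embedding of $M$ as a subdirect product of the simple quotients $M/N_j$ by its maximal submodules. Your handling of the degenerate cases is fine and the argument matches the paper's reasoning (the same construction reappears in the proof of Theorem \ref{stqdr}).
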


A submodule of an $M$-generated module is called \emph{$M$-subgenerated}. The full subcategory of $R$-Mod consisting of all $M$-subgenerated modules is denoted by $\sm$. Note that, if $M=R$ then $\sm=R$-Mod. This category is a Grothendieck category as is shown in \cite{wisbauerfoundations}. In \cite[18.3]{wisbauerfoundations} is proved that $M$ is $M^{(\mathcal{I})}$-projective for every index set $\mathcal{I}$ if and only if $M$ is a projective object in $\sm$; and a finitely generated module $M$ is projective in $\sm$ if and only if $M$ is $M$-projective (quasi-projective). A module $M$ is a \emph{generator} of $\sm$ if every module in $\sm$ is $M$-generated. It is not difficult to see that if a module is semiprime (Definition \ref{defsemiprime}) and projective in $\sm$ then $M$ is retractable \cite[Lemma 1.24]{maugoldie}.

\begin{lemma}\label{primmax}
Let $M$ be $M^{(\mathcal{I})}$-projective for every index set $\mathcal{I}$. If $M$ is an abelian endoregular module then every prime submodule is a maximal submodule. Moreover $M$ is a quasi-duo module.
\end{lemma}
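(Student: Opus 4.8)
The plan is to prove the two assertions separately; both rest on a single technical point, that a quotient of $M$ by a fully invariant submodule is again projective in its own subcategory. (Note that $M$ is quasi-projective here, being projective in $\sigma[M]$.)

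\textbf{Prime implies maximal.} Let $N$ be prime in $M$, so $N$ is a proper fully invariant submodule and, by Proposition \ref{2}(1), $M/N$ is a prime module. The first step is the auxiliary fact: \emph{if $M$ is $M^{(\mathcal{I})}$-projective for every index set $\mathcal{I}$ and $N\leq_{fi}M$, then $M/N$ is projective in $\sigma[M/N]$.} I would prove this by first showing that every homomorphism $f$ from $M$ into a module $X\in\sigma[M/N]$ kills $N$: realizing $X$ as a submodule of a quotient of $(M/N)^{(\mathcal{I})}=M^{(\mathcal{I})}/N^{(\mathcal{I})}$ and lifting $f$ successively through the two epimorphisms involved (using projectivity of $M$ in $\sigma[M]$) produces a homomorphism $M\to M^{(\mathcal{I})}$ whose coordinate maps lie in $S:=\End_R(M)$ and hence carry $N$ into $N^{(\mathcal{I})}$; tracing back gives $f(N)=0$. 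The lifting property of $M/N$ in $\sigma[M/N]$ then follows by lifting through the epimorphism $M\to M/N$ and factoring. Since $M$ is quasi-projective, $\End_R(M/N)\cong S/\Hom_R(M,N)$, which is abelian regular as a quotient of the abelian regular ring $S$. As $M/N$ is a prime module, this ring has only the trivial central idempotents, and an abelian regular ring with only trivial idempotents is a division ring. Finally, $M/N$ is semiprime and projective in $\sigma[M/N]$, hence retractable by \cite[Lemma 1.24]{maugoldie}; were $L$ a nonzero proper submodule of $M/N$, a nonzero element of $\Hom_R(M/N,L)\subseteq\End_R(M/N)$ would be a unit of this division ring and so surjective, a contradiction. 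Hence $M/N$ is simple and $N$ is maximal.

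\textbf{$M$ is quasi-duo.} Let $K$ be a maximal submodule and set $W=\bigcap_{\varphi\in S}\varphi^{-1}(K)$, the largest fully invariant submodule of $M$ contained in $K$. Since $W\leq_{fi}M$ and $M$ is quasi-projective, $\End_R(M/W)\cong S/\Hom_R(M,W)$ is abelian regular, so $M/W$ is abelian endoregular. For each $\varphi\in S$ the induced map $M/\varphi^{-1}(K)\to M/K$ is injective, so $M/\varphi^{-1}(K)$ is either $0$ or isomorphic to the simple module $V:=M/K$; and $\bigcap_\varphi\varphi^{-1}(K)=W$ yields an embedding $M/W\hookrightarrow\prod_{\varphi\in S}M/\varphi^{-1}(K)$, so $M/W\in\sigma[V]$. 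Every module in $\sigma[V]$ is semisimple (a $V$-generated module is an epic image of a direct sum of copies of the simple $V$, and submodules of semisimple modules are semisimple), hence $M/W$ is semisimple; being abelian endoregular it cannot have two isomorphic simple summands (Proposition \ref{sums}), so $M/W$ is simple. Since $K/W$ is a proper submodule of $M/W$, this forces $K=W$, i.e.\ $K$ is fully invariant.

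The step I expect to be the main obstacle is the auxiliary lemma that fully invariant quotients of $M$ remain projective in their own $\sigma[\,\cdot\,]$: this is where both hypotheses — full invariance of $N$ and projectivity of $M$ in $\sigma[M]$ — are genuinely used, through the coordinatewise lifting argument (and it is essential, as $\mathbb{Q}_{\mathbb{Z}}$ shows that a prime module with division endomorphism ring need not be retractable). Everything else is a package of standard facts about abelian von Neumann regular rings (quotients and prime factor rings behave well, reduced rings have no nonzero nilpotents) together with retractability of semiprime self-projective modules and the triviality of a semisimple abelian endoregular module.
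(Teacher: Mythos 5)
Your first half (prime $\Rightarrow$ maximal) is correct and runs parallel to the paper's proof: the descent of projectivity from $M$ to $M/N$ that you sketch is exactly what the paper invokes as \cite[Lemma 9]{vanmodules}, and your replacement of the paper's chain ``$\End_R(M/P)$ is a prime ring, hence the abelian regular quotient is a division ring, hence the retractable module $M/P$ is simple'' by the direct arguments (central idempotents of $\End_R(M/N)$ give fully invariant submodules with zero product; an abelian regular ring with only trivial idempotents is a division ring; a nonzero map into a proper submodule would be a unit) is sound and pleasantly self-contained.

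The quasi-duo half has a genuine gap at the step ``$M/W\hookrightarrow\prod_{\varphi\in S}M/\varphi^{-1}(K)$, so $M/W\in\sigma[V]$.'' Membership in $\sigma[V]$ means being a submodule of a quotient of a \emph{direct sum} of copies of $V$, and since $V$ is simple every module in $\sigma[V]$ is semisimple isotypic of type $V$; but $\sigma[V]$ is not closed under infinite products, and being cogenerated by $V$ (a subdirect product of copies of $V$) is strictly weaker. Concretely, if $R$ is a right primitive ring with faithful simple module $V$ that is not simple artinian (e.g.\ the first Weyl algebra with $V=k[x]$), then $r\mapsto(vr)_{0\neq v\in V}$ embeds $R_R$ into a product of copies of $V$, yet $R_R$ is not semisimple and does not lie in $\sigma[V]$. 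So the semisimplicity of $M/W$, which carries the rest of your argument, is unjustified; in effect you assume what has to be proved. The missing ingredient is exactly what the paper imports at this point: since $M$ is quasi-projective, your $W=\bigcap_{\varphi\in S}\varphi^{-1}(K)$ coincides with $\bigcap\{\Ker\psi\mid\psi\in\Hom_R(M,M/K)\}$, and \cite[Proposition 3.4]{mauozcan} shows that this submodule is \emph{prime} in $M$; then your first part gives that $W$ is maximal, and $W\subseteq K$ forces $K=W$, which is fully invariant. To repair your proof, either cite or prove that primeness statement (a direct argument with the product $K_ML$ and homomorphisms into the simple module $M/K$ is feasible), or supply a genuinely different justification that $M/W$ is semisimple; the $\sigma[V]$ shortcut as written does not work.
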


\begin{proof}
Let $P\leq M$ be prime in $M$. Since $P$ is fully invariant, every endomorphism $\varphi:M\to M$ defines an endomorphism $\overline{\varphi}:M/P\to M/P$. Then, there exists a ring homomorphism
\[\Theta:\End_R(M)\to \End_R(M/P).\]
Note that $\Theta$ is surjective because $M$ is quasi-projective. Since $M$ is projective in $\sm$, $M/P$ is projective in $\sigma[M/P]$ by \cite[Lemma 9]{vanmodules}. We have $M/P$ is a prime module, hence $\End_R(M/P)$ is a prime ring by \cite[Lemma 5.9]{mauozcan}. This implies that $\Ker\Theta$ is a prime ideal of $\End_R(M)$. Therefore $\End_R(M/P)$ is a division ring by \cite[Thorem 3.2]{goodearlneumann}. Since $M/P$ is retractable, $M/P$ is a simple module by \cite[Proposition 4.12]{leemodules}. Thus, $P$ is a maximal submodule.

Let $\mathcal{M}$ be a maximal submodule of $M$. Then $P=\bigcap\{\Ker\varphi\mid\varphi\in\Hom_R(M,M/\mathcal{M})\}$ is prime in $M$ by \cite[Proposition 3.4]{mauozcan} and $P\leq\mathcal{M}$. But $P$ is a maximal submodule. Hence, $\mathcal{M}=P$. Thus, $M$ is quasi-duo.
\end{proof}

\begin{rem}\label{3}
From last proposition and Lemma \ref{maxprime}, we can see that for an abelian endoregular module $M$ such that $M$ is $M^{(\mathcal{I})}$-projective for every index set $\mathcal{I}$, a submodule $P$ of $M$ is prime in $M$ if and only if $P$ is a maximal submodule.
\end{rem}

\begin{thm}\label{stqdr}
Consider the following conditions for a module $M$:
\begin{enumerate}[label=\emph{(\arabic*)}]
\item $M$ is a quasi-duo endoregular module and $M$ is a subdirect product of simple modules.
\item $M$ is an abelian endoregular module and $Rad(M)=0$.
\end{enumerate}
Then \emph{(1)}$\Rightarrow$\emph{(2)}. If $M$ is $M^{(\mathcal{I})}$-projective for every index set $\mathcal{I}$ then the two conditions are equivalent.
\end{thm}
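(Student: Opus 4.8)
The plan is to handle the two directions separately, the first being essentially a repackaging of earlier results and the second requiring the projectivity hypothesis together with Lemma \ref{primmax}. For (1)$\Rightarrow$(2): since $M$ is quasi-duo and a subdirect product of simple modules, Proposition \ref{sdpse} gives immediately that $M$ is abelian endoregular (using the endoregularity in the hypothesis). For $\Rad(M)=0$: the embedding $\alpha:M\hookrightarrow\prod_{j}S_j$ realizing the subdirect product has each $\Ker\pi_j\alpha$ a maximal submodule of $M$, and $\bigcap_j\Ker\pi_j\alpha=\Ker\alpha=0$; since $\Rad(M)$ is contained in every maximal submodule, $\Rad(M)=0$.

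For the converse, assume in addition $M$ is $M^{(\mathcal{I})}$-projective for every index set $\mathcal{I}$, and that $M$ is abelian endoregular with $\Rad(M)=0$. First I would invoke Lemma \ref{primmax} to conclude that $M$ is quasi-duo and that the prime submodules of $M$ are exactly the maximal submodules (Remark \ref{3}). It remains to show $M$ is a subdirect product of simple modules, i.e.\ that the intersection of all maximal submodules is zero; but that intersection is $\Rad(M)=0$ by hypothesis. Concretely, indexing the maximal submodules as $\{\mathcal{M}_j\}_{j\in\mathcal{J}}$, each quotient $M/\mathcal{M}_j$ is simple, and the natural map $M\to\prod_{j\in\mathcal{J}}M/\mathcal{M}_j$ has kernel $\bigcap_j\mathcal{M}_j=\Rad(M)=0$, hence is a monomorphism whose composition with each projection is the canonical surjection $M\to M/\mathcal{M}_j$. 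Thus $M$ is a subdirect product of simple modules, and it is endoregular (being abelian endoregular) and quasi-duo, giving condition (1).

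The one point to be careful about is that $\Rad(M)=\bigcap\{\mathcal{M}\mid\mathcal{M}\text{ maximal in }M\}$ requires $M$ to actually have maximal submodules (otherwise $\Rad(M)=M\neq 0$ would already contradict the hypothesis unless $M=0$); and if $M=0$ both conditions hold trivially. So the main substantive ingredient is Lemma \ref{primmax}, which supplies quasi-duo-ness for free from the abelian endoregular hypothesis under the projectivity assumption — without that lemma there would be no reason the maximal submodules are fully invariant. The rest is the observation that "subdirect product of simple modules'' is literally the statement $\Rad(M)=0$ once one knows the socle-style decomposition is governed by maximal submodules. I do not expect any computational obstacle; the only thing to state cleanly is the identification of the subdirect embedding with the Jacobson-radical-zero condition.
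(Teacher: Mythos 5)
Your proof is correct and takes essentially the same route as the paper: Proposition \ref{sdpse} together with $\Rad(M)\subseteq\bigcap_j\Ker\pi_j\alpha=0$ for (1)$\Rightarrow$(2), and Lemma \ref{primmax} plus the canonical monomorphism into the product of simple quotients for (2)$\Rightarrow$(1). The only cosmetic difference is that you index that product by the maximal submodules directly, whereas the paper indexes it by $Spec(M)$ and then identifies the primes with the maximal submodules via Remark \ref{3}; the argument is otherwise identical.
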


\begin{proof}
(1)$\Rightarrow$(2) Suppose there exists a monomorphism $\alpha:M\to \prod_{j\in \mathcal{J}}S_j$ with $S_j$ simple, such that $\pi_j\alpha$ are surjective for every $j\in \mathcal{J}$ where $\pi_j$ are the canonical projections. Note that $0=\Ker\alpha=\bigcap_{j\in\mathcal{J}}\Ker\pi_j\alpha$ and each $\Ker\pi_j\alpha$ is a maximal submodule of $M$. Therefore $Rad(M)\subseteq\bigcap_{j\in\mathcal{J}}\Ker\pi_j\alpha=0$. It follows from Proposition \ref{sdpse}, $M$ is an abelian endoregular module.

Now suppose that $M$ is $M^{(\mathcal{I})}$-projective for every index set $\mathcal{I}$.

\noindent (2)$\Rightarrow$(1) By Lemma \ref{primmax}, $M$ is quasi-duo. Since $Rad(M)=0$, the canonical homomorphism
\[\alpha:M\to \prod_{P\in Spec(M)}M/P\]
is a monomorphism (Remark \ref{3}). By Lemma \ref{primmax}, each $M/P$ is a simple module. Let $\pi_P$ denote the canonical projections. It is clear that $\pi_P\alpha$ is surjective for all $P\in Spec(M)$.
\end{proof}

The implication (2)$\Rightarrow$(1) in Theorem \ref{stqdr} may not be true if we do not impose the projectivity condition on $M$ or if $Rad(M)\neq 0$ as the following examples show.

\begin{example}
\begin{enumerate}
\item[(i)] Consider $\mathbb{Q}_\mathbb{Z}$. It can be seen that $\mathbb{Q}_\mathbb{Z}$ is not quasi-projective. On the other hand, $\mathbb{Q}_\mathbb{Z}$ is an abelian endoregular module. Note that $\mathbb{Q}_\mathbb{Z}$ is not a subdirect product of simple modules.

\item[(ii)] Let $R= \left(\begin{smallmatrix}
 \mathbb{Z}_{2}& \mathbb{Z}_{2} \\
          0 & \mathbb{Z}_{2}
\end{smallmatrix}\right)$ and $e_1=\left(\begin{smallmatrix}
 1 & 0 \\
        0 & 0
\end{smallmatrix}\right)\in R$. Consider the module $M=e_{1}R=\left(\begin{smallmatrix}
                \mathbb{Z}_{2}& \mathbb{Z}_{2} \\
                            0 & 0
\end{smallmatrix}\right)$. Note that $Rad(M)\neq 0$.
Since $\End_R(M)\cong \mathbb{Z}_2$, $M$ is a projective abelian endoregular module.
However $M$ is not a subdirect product of simple modules.

\end{enumerate}\end{example}

\begin{cor}
The following conditions are equivalent for a von Neumann regular ring $R$:
\begin{enumerate}[label=\emph{(\alph*)}]
\item $R$ is an abelian regular ring.
\item $R$ is a quasi-duo ring and $R$ is a subdirect product of simple $R$-modules.
\end{enumerate}
\end{cor}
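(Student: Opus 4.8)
The plan is to apply Theorem \ref{stqdr} to the module $M = R_R$ and translate everything into ring-theoretic language. First I would record the standard dictionary: since $\End_R(R_R) \cong R$ as rings, the module $R_R$ is endoregular precisely when $R$ is von Neumann regular, and $R_R$ is abelian endoregular precisely when $R$ is abelian regular; moreover $R_R$ is quasi-duo as a module exactly when every maximal right ideal is two-sided, i.e. when $R$ is a quasi-duo ring, and $\Rad(R_R) = J(R)$, the Jacobson radical. A subdirect product of simple modules in $\sigma[R_R] = R\text{-Mod}$ is exactly a subdirect product of simple $R$-modules.

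Next I would check that the projectivity hypothesis of Theorem \ref{stqdr} is automatic in this situation: $R_R$ is a projective module, hence $N$-projective for every module $N$, in particular $R^{(\mathcal{I})}$-projective for every index set $\mathcal{I}$. Thus the full equivalence ``(1)$\Leftrightarrow$(2)'' of Theorem \ref{stqdr} is available for $M = R_R$. I would also note that a von Neumann regular ring is semiprimitive: if $x \in J(R)$ and $xyx = x$, then $xy$ is an idempotent lying in $J(R)$, so $xy = 0$ and hence $x = xyx = 0$; therefore $\Rad(R_R) = J(R) = 0$ whenever $R$ is von Neumann regular, which means the radical-zero clause in condition (2) of Theorem \ref{stqdr} is free in our setting.

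Finally I would chain the equivalences. Assuming $R$ is von Neumann regular, condition (a) says $R_R$ is abelian endoregular, which, since $\Rad(R_R) = 0$ already, is exactly condition (2) of Theorem \ref{stqdr} for $M = R_R$; by that theorem it is equivalent to condition (1), namely that $R_R$ is a quasi-duo endoregular module which is a subdirect product of simple modules. Translating back through the dictionary above (and using that $R_R$ is automatically endoregular because $R$ is regular), this is precisely condition (b). There is essentially no obstacle here beyond verifying that the hypotheses specialize correctly; the one point worth stating explicitly in the write-up is why the projectivity requirement and the radical-zero clause of Theorem \ref{stqdr} are automatic for the regular ring $R$ acting on itself.
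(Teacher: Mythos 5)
Your proposal is correct and follows exactly the route the paper intends: the corollary is stated as an immediate consequence of Theorem \ref{stqdr} applied to $M=R_R$, using $\End_R(R_R)\cong R$, the automatic projectivity of $R_R$, and the fact that a von Neumann regular ring has zero Jacobson radical so the clause $\Rad(M)=0$ comes for free. Your explicit verification of these specializations is precisely the (unwritten) argument behind the paper's corollary.
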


Recall that a module $M$ is said to be \emph{co-semisimple} if each simple module (in $\sm$) is $M$-injective. In the ring case, this concept is known as right \emph{V-ring}. In \cite{garciaendomorphism} the authors present some results about modules whose endormorphism rings are V-rings.

\begin{prop}\label{coss}
Let $M$ be a finitely generated quasi-projective module. The following conditions are equivalent:
\begin{enumerate}[label=\emph{(\alph*)}]
\item $M$ is abelian endoregular and a generator of $\sm$.
\item $M$ is co-semisimple and quasi-duo.
\item $M$ is duo and a generator of $\sm$ such that every cyclic submodule is a direct summand.
\end{enumerate}
\end{prop}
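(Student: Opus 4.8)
The plan is to prove $(a)\Leftrightarrow(c)$ and $(a)\Leftrightarrow(b)$, the substance lying in $(c)\Rightarrow(a)$ and $(b)\Rightarrow(a)$. Throughout I would set $S=\End_R(M)$ and use that, since $M$ is finitely generated and quasi-projective, $M$ is projective in $\sigma[M]$ and $M^{(\mathcal I)}$-projective for every $\mathcal I$ (so Lemma \ref{primmax}, Remark \ref{3} and Theorem \ref{stqdr} are available); and that whenever $M$ is moreover a generator of $\sigma[M]$, it is a progenerator, so $\Hom_R(M,-)\colon\sigma[M]\to\mathrm{Mod}\text{-}S$ is a category equivalence sending $M$ to $S_S$ and matching submodules with right ideals, fully invariant submodules with two-sided ideals, direct summands with direct summands, and simple (resp.\ injective) objects with simple (resp.\ injective) ones.

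For $(a)\Rightarrow(c)$ I would invoke Proposition \ref{subfi}: $M$ is endoregular and, being a generator, every submodule is $M$-generated hence fully invariant, so $M$ is duo; and any cyclic submodule $xR$ is $M$-generated and cyclic, hence finitely $M$-generated, hence a direct summand by Proposition \ref{finmsumand}. For $(c)\Rightarrow(a)$: from the duo hypothesis, for an idempotent $e\in S$ both $eM$ and $(1-e)M$ are fully invariant, giving $(1-e)Se=0=eS(1-e)$, so $e$ is central by \cite[Lemma 3.1]{goodearlneumann}; thus $S$ is abelian. Given $\varphi\in S$ and generators $m_1,\dots,m_t$ of $M$, $\Img\varphi=\varphi(m_1)R+\cdots+\varphi(m_t)R$ is a finite sum of cyclic submodules, each equal to $e_iM$ for a central idempotent $e_i$; a finite sum of idempotent-generated summands in an abelian ring is again $eM$ with $e=1-\prod_i(1-e_i)$ a central idempotent, so $\Img\varphi=eM\dleq M$. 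Since $eM$ is a direct summand of $M$ it is projective in $\sigma[M]$, so the epimorphism $M\to\Img\varphi$ splits and $\Ker\varphi$ is a direct summand too; hence $M$ is endoregular (Proposition \ref{azu}) and therefore abelian endoregular by Proposition \ref{subfi}, while $M$ is a generator by hypothesis.

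For $(a)\Rightarrow(b)$: $M$ is duo, so quasi-duo; and under $\sigma[M]\simeq\mathrm{Mod}\text{-}S$, co-semisimplicity of $M$ translates to every simple $S$-module being injective, i.e.\ $S$ being a right V-ring, which holds since $S$ is abelian regular and strongly regular rings are right V-rings (cf.\ \cite{garciaendomorphism}). The hard part is $(b)\Rightarrow(a)$, which I would split into three stages. First, $M$ is duo: for $N\le M$, $M/N\in\sigma[M]$ has zero radical by co-semisimplicity, so $N$ is an intersection of maximal submodules of $M$, each fully invariant since $M$ is quasi-duo, whence $N$ is fully invariant. Second, $M$ is a generator of $\sigma[M]$: a simple $T\in\sigma[M]$ is $M$-injective, hence injective in $\sigma[M]$; expressing a nonzero element of $T$ through finitely many generating maps of an $M$-generated module containing $T$ places $T$ inside a finitely generated $M$-generated module $N'$, of which the injective $T$ is a direct summand, so $T$ is $M$-generated; then for any $N\in\sigma[M]$ one has $N=\mathrm{Tr}(M,N)$, since otherwise $N/\mathrm{Tr}(M,N)$ has a simple quotient $T$, an epimorphism $M\to T$ lifts along $N\to T$ by projectivity of $M$ in $\sigma[M]$, and its image lies in $\mathrm{Tr}(M,N)$ — a contradiction. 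Third, via the resulting equivalence $S$ is a duo right V-ring; then $J(S)=\Rad(S_S)=0$ embeds $S$ into $\prod_{\mathcal M}S/\mathcal M$ over the maximal right ideals, each two-sided (duo), so each $S/\mathcal M$ is a simple duo ring, i.e.\ a division ring, and $S$ is reduced; finally, for $a\in S$ one checks $aS\cap\mathrm{ann}_r(a)=0$ (reducedness) and $aS+\mathrm{ann}_r(a)=S$ — if not it lies in a maximal right ideal $\mathcal M$, and extending a nonzero map $aS\to S/\mathcal M$ (injectivity of the division ring $S/\mathcal M$) to $S$ forces it to vanish on $aS\subseteq\mathcal M$ — so $aS=eS$ for an idempotent $e$ with $ea=a$, giving $a\in aSa$; thus $S$ is von Neumann regular, hence strongly regular by \cite[Theorem 3.2]{goodearlneumann}, i.e.\ $M$ is abelian endoregular, and with the second stage we obtain $(a)$.

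I expect the main obstacle to be the second stage of $(b)\Rightarrow(a)$ — upgrading ``quasi-duo'' to the generator property, which genuinely uses the interaction of co-semisimplicity with projectivity of $M$ in $\sigma[M]$ via the injectivity of simple objects — together with the ring-theoretic input that strongly regular rings are right V-rings used in $(a)\Rightarrow(b)$. The bookkeeping that the progenerator equivalence identifies all the relevant lattice- and ideal-theoretic notions, and the elementary computations with central idempotents, I would only sketch.
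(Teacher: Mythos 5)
Your proposal is correct, but it takes a genuinely different and far more self-contained route than the paper. The paper argues (a)$\Rightarrow$(b), (b)$\Leftrightarrow$(c), (c)$\Rightarrow$(a) and outsources the heavy steps: co-semisimplicity in (a)$\Rightarrow$(b) is quoted from \cite[Corollary 2.4]{garciaendomorphism} (quasi-duo coming from Lemma \ref{primmax}), the equivalence (b)$\Leftrightarrow$(c) is quoted wholesale from \cite[Proposition 5.13]{mauozcan}, and in (c)$\Rightarrow$(a) the key fact that the finitely generated submodule $f(M)=fSM$ is a direct summand is quoted from \cite[Remark 6.1]{tuganbaevrings}; regularity of $S$ then falls out of $S=fS\oplus\Hom_R(M,N)$ via quasi-projectivity, and Proposition \ref{subfi} finishes. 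You instead prove (a)$\Leftrightarrow$(c) and (a)$\Leftrightarrow$(b) directly: in (c)$\Rightarrow$(a) you replace the Tuganbaev citation by the observation that duo makes idempotents central, so a finite sum of cyclic summands is again $eM$ with $e$ a central idempotent, and you split $M\to\Img\varphi$ by projectivity of $M$ in $\sigma[M]$, reaching the same endpoint through Propositions \ref{azu} and \ref{subfi}; in (b)$\Rightarrow$(a) you in effect reprove the content of the omitted citation (quasi-duo plus co-semisimple gives duo and the generator property) and then, through the progenerator equivalence, run a Kaplansky-style argument: $S$ is a duo right V-ring with $J(S)=0$, hence reduced, and $aS\oplus\mathrm{ann}_r(a)=S$ yields strong regularity. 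The paper's route buys brevity; yours buys independence from three external references and makes the underlying ring-theoretic mechanism (reduced duo V-ring with zero radical is strongly regular) explicit. Two spots deserve a word when writing it up: in the second stage of (b)$\Rightarrow$(a) the existence of a simple quotient of $N/\mathrm{Tr}(M,N)$ is not automatic for arbitrary $N\in\sigma[M]$ --- either use that co-semisimplicity of $M$ forces $\Rad K=0$, hence a maximal submodule, for every nonzero $K\in\sigma[M]$, or reduce to cyclic $N$; and the equivalence $\sigma[M]\simeq\mathrm{Mod}\text{-}S$ invoked in (a)$\Rightarrow$(b) and in your third stage is available only after the generator property is in hand, which in both places it is.
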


\begin{proof}
(a)$\Rightarrow$(b) Since every abelian regular ring is a V-ring, by \cite[Corollary 2.4]{garciaendomorphism} $M$ is co-semisimple. By Lemma \ref{primmax} $M$ is quasi-duo.

(b)$\Leftrightarrow$(c) It follows from \cite[Proposition 5.13]{mauozcan}.

(c)$\Rightarrow$(a) We claim that $S=\End_R(M)$ is a von Neumann regular ring. Let $f\in S$. We have $fSM=f(M)$. Since $f(M)$ is finitely generated $f(M)\dleq M$ \cite[Remark 6.1]{tuganbaevrings}, that is, $M=fSM\oplus N$ for some $N\leq M$. On the other hand, $fS=\Hom_R(M,fSM)$ because  $M$ is quasi-projective. Hence,
\[S=\Hom_R(M,fSM)\oplus\Hom_R(M,N)=fS\oplus \Hom_R(M,N).\]
Thus $S$ is a von Neumann regular ring, that is, $M$ is endoregular. Since $M$ is duo, $M$ is abelian endoregular by Proposition \ref{subfi}.
\end{proof}

\begin{cor}
The following conditions are equivalent for a ring $R$:
\begin{enumerate}[label=\emph{(\alph*)}]
\item $R$ is an abelian regular ring.
\item $R$ is a quasi-duo $V$-ring.
\item $R$ is a duo von Neumann regular ring.
\end{enumerate}
\end{cor}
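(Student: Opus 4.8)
The plan is to deduce this corollary as the special case $M=R_R$ of Proposition~\ref{coss}. First I would note that the regular module $R_R$ is cyclic, hence finitely generated, and projective, hence quasi-projective, so Proposition~\ref{coss} applies to it; what remains is to translate each of its three conditions into the stated ring-theoretic assertions.

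The translation rests on a short dictionary. Via left multiplication one has a ring isomorphism $\End_R(R_R)\cong R$, so $R_R$ is abelian endoregular exactly when $R$ is an abelian von Neumann regular ring. A submodule of $R_R$ is precisely a right ideal, and it is fully invariant if and only if it is two-sided (test against the endomorphisms given by left multiplication by elements of $R$); consequently ``$R_R$ is quasi-duo'' (resp. ``$R_R$ is duo'') means that every maximal right ideal (resp. every right ideal) of $R$ is two-sided, i.e. that $R$ is a quasi-duo (resp. duo) ring. Since $M=R$ gives $\sm=R\text{-Mod}$ and $R$ generates $R\text{-Mod}$, the clause ``generator of $\sm$'' is automatic and may be deleted. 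As the paper already recalls, ``$R_R$ is co-semisimple'' is by definition ``$R$ is a right $V$-ring''. Finally, the cyclic submodules of $R_R$ are the principal right ideals $aR$, and ``every cyclic submodule is a direct summand'' is the standard characterization of von Neumann regularity of $R$ (see \cite{goodearlneumann}).

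With this dictionary, condition (a) of Proposition~\ref{coss} becomes ``$R$ is an abelian regular ring'', condition (b) becomes ``$R$ is a quasi-duo right $V$-ring'', and condition (c) becomes ``$R$ is a duo von Neumann regular ring'', which are exactly (a), (b), (c) of the corollary; hence they are mutually equivalent. The only point demanding care is the (routine but essential) identification of the fully invariant submodules of $R_R$ with the two-sided ideals of $R$, which is what makes the module-theoretic notions of (quasi-)duo specialize correctly; beyond this bookkeeping there is no genuine obstacle.
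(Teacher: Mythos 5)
Your proposal is correct and matches the paper's intent exactly: the corollary is stated without proof precisely because it is the specialization of Proposition~\ref{coss} to $M=R_R$, using the identifications $\End_R(R_R)\cong R$, fully invariant right ideals $=$ two-sided ideals, $\sigma[R]=\mathrm{Mod}\text{-}R$ (so the generator clause is automatic), co-semisimple $R_R$ $=$ right $V$-ring, and ``every principal right ideal is a direct summand'' $=$ von Neumann regularity. Your careful handling of the fully-invariant/two-sided dictionary is exactly the bookkeeping the paper leaves implicit.
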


\section*{Acknowledgment}

The authors are very thankful to Prof. Gangyong Lee for suggesting to study this subject and for his remarks on this work.

\bibliographystyle{acm}
\bibliography{biblio}

\end{document}